\documentclass[a4paper,12pt]{article}
\usepackage{amsfonts,amsthm,amssymb,amsmath}
\usepackage[colorlinks=true,linkcolor=blue,citecolor=blue]{hyperref}

\usepackage{tcolorbox}
\usepackage[english]{babel}
\usepackage{amssymb,amsmath}
\usepackage{xcolor}

\addtolength{\textwidth}{1.1cm}
\addtolength{\hoffset}{-0.5cm}

\newtheorem{Theo}{Theorem}[section]
\newtheorem{Prop}[Theo]{Proposition}
\newtheorem{Coro}[Theo]{Corollary}
\newtheorem{Lemm}[Theo]{Lemma}

\newtheorem{Conj}[Theo]{Conjeture}

\newtheorem{Rema}[Theo]{Remark}


\newcommand{\xo}{\vec{X}}
\newcommand{\yo}{\vec{Y}}

\DeclareMathOperator{\mon}{mon}

\DeclareMathOperator{\Maj}{Maj}

\DeclareMathOperator{\supp}{supp}

\def\T{\mathbb{ T}}
\def\N{\mathbb{ N}}
\def\R{\mathbb{ R}}

\begin{document}
\title{Variants of a multiplier theorem of Kislyakov}
\author{A.~Defant, M.~Masty{\l}o and A.~P\'{e}rez~Hern\'{a}ndez}

\date{}

\maketitle
\noindent
\renewcommand{\thefootnote}{\fnsymbol{footnote}}
\footnotetext{2010 \emph{Mathematics Subject Classification}: Primary 43A46, 43A75, 06E30, 42A16.}
\footnotetext{\emph{Key words and phrases}: Fourier analysis on groups, multipliers, Sidon sets,
Boolean functions, multivariable and Dirichlet polynomials.}
\footnotetext{The second named author was supported by the National Science  Centre, Poland,
Grant no.~2019/33/B/ST1/00165. The third named author acknowledges support from the grants “Juan de
la Cierva Formación” (FJC2018-036519-I) and 2021-MAT11 (ETSI Industriales,
UNED).}

\begin{abstract}
\noindent
We prove stronger variants of a multiplier theorem of Kislyakov.~The key \linebreak ingredients are based on ideas of Kislaykov
and the Kahane--Salem--Zygmund inequality. As a by-product we show various multiplier theorems for spaces of trigonometric
polynomials on the $n$-dimensional torus $\mathbb{T}^n$ or Boolean cubes $\{-1,1\}^N$. Our more abstract approach based on
local Banach space theory has the advantage that it allows to consider more general compact abelian groups instead of only
the multidimensional torus.~As an application we show that  various  recent  $\ell_1$-multiplier theorems for trigonometric
polynomials in several variables or ordinary Dirichlet series may be proved without the Kahane--Salem--Zygmund inequality.
\end{abstract}

\newpage

\tableofcontents

\section{Introduction}

Let $\T$ be  the torus in the complex plane, that is, the compact abelian group of all $z \in \mathbb{C}$ with $|z|=1$, which
carries  the normalized Lebesgue measure $\nu$ on $\T$ as its Haar measure. By $\T^\infty$ we denote  the countable product
of $\T$, which  again forms a~compact abelian group (the Haar measure is the countable product of $\nu$), and identify its dual
group with $\mathbb{Z}^{(\mathbb{N})}$, all finite multi indices $\alpha \in \mathbb{Z}^n,\,n \in\mathbb{N}$. We write
$\mathbb{N}_0^{(\mathbb{N})}$ for all $\alpha \in \mathbb{Z}^{(\mathbb{N})}$ with entries in $\N_0$.

As usual $H_\infty(\T^\infty)$ stands for  the Banach space of  all functions $f \in L_\infty(\T^\infty)$ such that
$\widehat{f}(\alpha)= 0$ for all $\alpha =(\alpha_i) \in \mathbb{Z}^{(\mathbb{N})}$ with $\alpha_j<0$ for some $j$. We call
$H_\infty(\T^\infty)$ Hardy space on the infinite dimensional torus, and  denote its closed subspace of all continuous
functions by $C^A(\T^\infty)$.

A sequence $\xi = (\xi_\alpha)_{\alpha \in \mathbb{N}_0^{(\mathbb{N})}} $  of scalars is a bounded $\ell_1(\mathbb{N}_0^{(\mathbb{N})})$-multiplier
of $C^A(\mathbb{T}^\infty)$, whenever the mapping $ M_\xi\colon C^{A}(\mathbb{T}^\infty) \to \ell_1(\mathbb{N}_0^{(\mathbb{N})})$
given by
\[
M_\xi(f):= \big(\widehat{f}(\alpha) \xi_{\alpha}\big)_{\alpha \in \mathbb{Z}^{(\mathbb{N})}}, \quad\, f\in C^{A}(\mathbb{T}^\infty)
\]
is bounded. The  following necessary condition for  such multipliers  is due  to Kislyakov \cite[Theorem~6]{Kisl}, and it in fact
is the main motivation of this paper.

\begin{Theo} \label{Thm:KKKK}
Let $\xi = (\xi_\alpha)_{\alpha \in \mathbb{N}_0^{(\mathbb{N})}} $ be a bounded $\ell_1(\mathbb{N}_0^{(\mathbb{N})})$-multiplier of $C^A(\mathbb{T}^\infty)$.
Then
\begin{align} \label{start}
\sup_{n,d \in \mathbb{ N}}\,\,\,\frac{1}{\sqrt{n \log( 1+dn)}}
\Big(\sum_{\alpha \in \mathbb{N}_0^n \colon \max\{\alpha_1,...,\alpha_n\}\leq d} & |\xi_{\alpha}|^2\Big)^{1/2}
\,\,\,< \infty\,.
\end{align}
\end{Theo}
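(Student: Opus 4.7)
The plan is to test the multiplier inequality against a well-chosen sign-randomized polynomial. Fix $n, d \in \mathbb{N}$ and write $A_{n,d} := \{\alpha \in \mathbb{N}_0^n : \max_j \alpha_j \leq d\}$. For any $\varepsilon = (\varepsilon_\alpha)_{\alpha \in A_{n,d}} \in \{-1, +1\}^{A_{n,d}}$ consider the trigonometric polynomial
\[
f_\varepsilon(z) := \sum_{\alpha \in A_{n,d}} \varepsilon_\alpha\, \overline{\xi_\alpha}\, z^\alpha,
\]
viewed as an element of $C^A(\mathbb{T}^\infty)$ depending only on the first $n$ coordinates. The crucial observation is that regardless of the signs chosen,
\[
\|M_\xi f_\varepsilon\|_{\ell_1} \;=\; \sum_{\alpha \in A_{n,d}} \bigl|\widehat{f_\varepsilon}(\alpha)\, \xi_\alpha\bigr| \;=\; \sum_{\alpha \in A_{n,d}} |\xi_\alpha|^2,
\]
so the boundedness hypothesis on $M_\xi$ reduces the theorem to finding \emph{one} sign pattern $\varepsilon$ that keeps $\|f_\varepsilon\|_\infty$ small, namely of order $\sqrt{n\log(1+dn)}\,(\sum_\alpha |\xi_\alpha|^2)^{1/2}$.

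This is exactly what the Kahane--Salem--Zygmund inequality supplies. Applied to the coefficient sequence $(\overline{\xi_\alpha})_{\alpha \in A_{n,d}}$, it yields an absolute constant $C > 0$ and signs $\varepsilon_\alpha$ with
\[
\|f_\varepsilon\|_\infty \;\leq\; C\, \sqrt{n \log(1+dn)}\, \Big(\sum_{\alpha \in A_{n,d}} |\xi_\alpha|^2\Big)^{1/2}.
\]
Inserting this estimate into the multiplier inequality $\sum_\alpha |\xi_\alpha|^2 \leq \|M_\xi\|\, \|f_\varepsilon\|_\infty$ and dividing by $(\sum_\alpha |\xi_\alpha|^2)^{1/2}$ gives
\[
\Big(\sum_{\alpha \in A_{n,d}} |\xi_\alpha|^2\Big)^{1/2} \;\leq\; C\, \|M_\xi\|\, \sqrt{n \log(1+dn)},
\]
uniformly in $n$ and $d$, which is precisely \eqref{start}.

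The delicate ingredient, and the real obstacle, is the invocation of KSZ with precisely the factor $\sqrt{n \log(1+dn)}$. Its proof rests on a net argument on $\mathbb{T}^n$ of cardinality polynomial in $d$ and $n$, combined with sub-Gaussian tail bounds for Rademacher sums and a union bound; the $\sqrt{\log(1+dn)}$ reflects the entropy count, while the extra $\sqrt{n}$ absorbs the Bernstein-type derivative estimate that controls the polynomial between net points. I would not reprove KSZ here, but rather cite the sharp form available in the literature, since this is where the entire quantitative content of the theorem actually resides. The only additional care needed in the write-up is to verify that $f_\varepsilon$, which is a trigonometric polynomial of $n$ variables with spectrum in $\mathbb{N}_0^n$, genuinely belongs to $C^A(\mathbb{T}^\infty)$ under the canonical embedding, so that it is an admissible test function for $M_\xi$.
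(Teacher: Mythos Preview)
Your argument is correct and matches the paper's probabilistic proof in Section~3: both test the multiplier against a Rademacher-randomized polynomial with coefficients $\overline{\xi_\alpha}$ and invoke the Kahane--Salem--Zygmund inequality (with degree $m=dn$, since $A_{n,d}\subset\Lambda^{\leq}(dn,n)$) to control the sup norm. The only cosmetic difference is that the paper first packages the KSZ step into a general operator inequality (Theorem~\ref{Thm:SK}) before specializing, whereas you go straight to the concrete test function.
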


\medskip

Inspired by this result, in particular  the techniques from local Banach space theory which Kislyakov uses to prove it,
we study the following more general (but also more vague) question:

Let $G$ is a compact abelian group with Haar measure $\nu$, and $\Gamma$ a subset of characters in the dual group
$\widehat{G}$. Moreover, let $X(\Gamma)$ be a Banach sequence space over the set $\Gamma$. A (real or complex) sequence
$\xi = (\xi_\gamma)_{\gamma \in \Gamma}$ is an  $X(\Gamma)$-multiplier of a~closed subspace  $V \subset L_p(G)$,
whenever
\[
(\widehat{f}(\gamma) \xi_\gamma) \in X(\Gamma), \quad\, f \in V\,.
\]
The problem then is to find necessary and sufficient conditions for such $X(\Gamma)$-multipliers of $V$.
Our applications mainly focus on multiplier theorems for the $n$-dimensional torus  $\mathbb{T}^n$, its Boolean
counterpart, the  $n$-dimensional Boolean cube $\{-1,1\}^n$, as well as their countable counterparts
$\mathbb{T}^\infty := \mathbb{T}^\mathbb{N}$ and $\{-1,1\}^\infty:= \{-1, 1\}^{\mathbb{N}}$.

\medskip

Observe that by a simple closed graph argument the study of $X(\Gamma)$-multipliers for $V \subset L_\infty(G)$
means to study concrete  inequalities: $\xi$ is an  $X(\Gamma)$-multiplier for $V \subset L_\infty(G)$ if and only
if there is a constant $C=C(\xi)~>~0$ such that
\[
\sum_{\gamma \in \Gamma} |\widehat{f}(\gamma) \xi_\gamma|\leq C \|f\|_\infty, \quad\, f\in V\,.
\]
We note that only in very few cases a full description of the set of all $X$-multipliers~$\xi$ of $V \subset L_\infty(G)$
is possible. In most of our applications,  we are  able to give necessary and/or sufficient conditions in term of the
asymptotic decay of~$\xi$.

In the first section we show that  Theorem~\ref{Thm:KKKK} is in fact a consequence of the Kahane-Salem-Zygmund inequality,
and in  Theorem~\ref{Thm:SK} and Corollary~\ref{Thm:K} we  extend Kislyakov's multiplier theorem  to certain analytic
subspaces of $L_\infty(\mathbb{T}^\infty)$ instead of $C^A(\T^\infty)$. It should be mentioned here that Kislyakov's
approach to Theorem~\ref{Thm:KKKK} is different, and  in the  second and third section we analyze his cycle of ideas from
local Banach space theory -- the main advantage is that they apply to more general compact abelian groups than only
multi-dimensional tori.

The crucial  link, which makes this possible,  comes from  Lemma~\ref{oneone} showing that, given a~compact abelian group
$G$, a~finite subset $\Gamma$  in $\widehat{G}$, and a~Banach space $F:=(\mathbb{C}^{\Gamma}, \|\cdot\|)$, for every finite
sequence $\xi = (\xi_\gamma)_{\gamma \in \Gamma}$  one has
\[
\pi_2(M_\xi: \mathcal P_\Gamma \to F)
=\sup_{\|\mu\|_{\ell_2(\Gamma)}\leq 1} \|(\mu_{\gamma} \xi_\gamma)_{\gamma \in \Gamma}\|_F\,,
\]
where $\mathcal P_\Gamma$ stands for the Banach space of all finite polynomials $\sum_{\gamma \in \Gamma}\xi_\gamma \gamma$
endowed with the sup norm, $M_\xi$  for the  multiplier, which assigns to every finite polynomial
$\sum_{\gamma \in \Gamma}\xi_\gamma \gamma$ the finite sequence $(\mu_{\gamma} \xi_\gamma)_{\gamma \in \Gamma} \in F$,
and $\pi_2(M_\xi)$ for the $2$-summing norm of this operator.

In the Theorems~\ref{Thm:KK}, ~\ref{Thm:IntMultB}, and~\ref{Thm:IntMult}, fundamental knowledge on $2$-summing operators
leads  to improvements of Theorem~\ref{Thm:KKKK}.

In the last section we apply our results to study multiplier theorems for spaces of functions on multidimensional
tori and Boolean cubes. We focus on topics like Sidon constants, Bohr radii, monomial convergence, as well as
Dirichlet series.

Using Kislyakov's ideas we prove new results, but we also reprove recent known results, which were originally
proved through the use of the Kahane--Salem--Zygmund inequality.

At first glance this might look  surprising, but on the other hand we already remarked that  our starting point,
Theorem~\ref{Thm:KKKK}, is a~consequence of the Kahane-Salem-Zygmund inequality, and conversely we showed in the
recent article~\cite{DefantMastylo} that Kislyakov's ideas are of great relevance within a~further study of the
Kahane-Salem-Zygmund inequality.

\section{Preliminaries}

\noindent{\bf Banach spaces.}
Let $X$, $Y$ be Banach spaces. We denote by $B_X$ the closed unit ball of $X$, and by $X^{*}$ its dual Banach space.
If we write $X\hookrightarrow Y$, then we assume that $X\subset Y$ and the inclusion map ${\rm{id}}\colon X \to Y$
is bounded. If $X=Y$ with equality of norms, then we write $X\cong Y$. As usual $C(K)$ denotes the Banach space of
all continuous functions on a~compact Hausdorff space $K$, with the sup norm $\|\cdot\|_\infty$.

We denote by $L(X, Y)$ the space of all bounded linear operators $T \colon X\to Y$ with the usual operator norm.
An operator $T \in L(X, Y)$ is said to be an isomorphic embedding of $X$ into $Y$ whenever there exists $C>0$ such
that $\|Tx\|_Y \geq C \|x\|_X$ for every $x\in X$. Thus $T^{-1}$ is an isomorphism from $(TX, \|\cdot\|_Y)$ onto
$X$. Given a~real number $1\leq \lambda <\infty$, we say that $X$ $\lambda$-embeds into $Y$ whenever there exists
an isomorphic embedding $T$ of $X$ into $Y$ such that $\|T\|\,\|T^{-1}\| \leq \lambda$\,. In this case, we call $T$
a~$\lambda$-embedding of $X$ into $Y$.

Let $\Gamma$ be a nonempty set. We denote by $\ell_\infty(\Gamma)$ the space of all bounded functions on $\Gamma$,
endowed with the sup norm. For any $\xi, \eta \in \ell_{\infty}(\Gamma)$, we denote by $\xi \cdot \eta$ their
pointwise product. Let $E$ and $F$ be linear  subspaces of $\mathbb{K}^{\Gamma}$, where $\mathbb{K} = \mathbb{R}$
or $\mathbb{K} = \mathbb{C}$. Any $\xi \in \mathbb{K}^\Gamma$, such that $\xi \eta:=\xi \cdot \eta \in F$ for all
$\eta \in E$, defines a   \emph{diagonal operator}  $D_\xi\colon E \to F$  given by $D_\xi(\eta):= \xi \eta$ for
all $\eta \in E$. We write $\mathcal{D}(E,F)$ for the vector space of all such maps. If $E$ and $F$ are Banach
spaces such that the inclusion maps from $E$ and $F$ into $\mathbb{K}^{\Gamma}$ are continuous, then
$\mathcal{D}(E, F)$ equipped with the norm
\[
\|D_{\xi}\|_{\mathcal{D}(E, F)} := \sup_{\|(\eta_\gamma)\|_E \leq 1} \|(\xi_\gamma\eta_\gamma)\|_F
\]
is a~Banach space.

We apply standard methods from  local Banach space theory. Recall that a~Banach space $X$ has cotype $2$
whenever there is a~constant $C$ such that for each choice of finitely many $x_1, \ldots,x_n \in X$
\[
\Big(\sum_{k=1}^n \|x_k\|^2 \Big)^{\frac{1}{2}}
\leq C \Big(\int_0^1\Big\|\sum_{k=1}^n  r_k(t) x_k \Big\|^2 dt\Big)^{\frac{1}{2}}\,,
\]
where $r_k$ denotes the $k$th Rademacher function. The least possible value of this constant is denoted $C_2(X)$.

An operator $T\colon X\to Y$ between Banach spaces is said to be absolutely $p$-summing ($p$-summing for short)
with $1\leq p<\infty$ if there is a~constant $C>0$ such that, for each $n\in \mathbb{N}$ and for all sequences
$(x_k)_{k=1}^n$ in $X$, we have
\[
\Big(\sum_{k=1}^n \|Tx_k\|_{Y}^p\Big)^{1/p}
\leq C \sup_{x^{*}\in B_{X^{*}}} \Big(\sum_{k=1}^n |x^{*}(x_k)|^p\Big)^{\frac{1}{p}}\,.
\]
The least such constant $C$ is denoted by $\pi_p(T\colon X \to Y)$ ($\pi_p(T)$ for short), and is called the
absolutely $p$-summing norm of $T$. We refer to the theory of $p$-summing operators to \cite{DiestelJarchowTonge}
and \cite{Pisier}.

We recall that if $K$ is a~compact Hausdorff space, $X$ a~closed subspace of $C(K)$, and $Y$ a~Banach space, then
the so-called Pietsch domination theorem states that for every  $p$-summing $T\colon X\to Y$,  there exists a~probability
Borel measure $\mu$ on $K$ such that
\[
\|Tf\|_Y  \leq \pi_p(T) \Big(\int_K |f|^p\,d\mu \Big)^{\frac{1}{p}}, \quad\, f\in X\,.
\]

\medskip

\noindent{\bf Compact abelian groups.}
In the following we fix some compact abelian group $G := (G, \cdot)$. A~linear subspace $X$ of $\mathbb{K}^{G}$ is said
to be translation invariant whenever  for every $f\in X$ and every $h\in G$, the translation $f_h \in X$, where
$f_h(g):= f(g\cdot h)$ for every $g\in G$.

As usual we write  $\widehat{G}$ for the dual group of $G$ (i.e., the set of all continuous characters on $G$), and we
denote by $\nu$ the (normalized) Haar measure on $G$, a~unique translation invariant regular Borel probability measure.
Recall that the translation invariance of $\nu$ is equivalent to the formula
\[
\int_G f(g)\,d\nu(g) = \int_G f_h(g)\,d\nu(g), \quad\, f\in L_1(G, \nu), \,\, h\in G\,.
\]
The following well-known result from \cite{Pel} is central for our purposes; for the sake of completeness we include
a~simple proof.

\begin{Lemm}
\label{one}
Let $G$ be a~compact abelian group with normalized Haar measure $\nu$, let $X$ be a~closed translation invariant
subspace of $C(G)$, and let $Y$ be an arbitrary Banach space. Suppose that  $T\colon X \to Y$ is a~$p$-summing
operator which satisfies that $\|Tf_h\|_Y = \|Tf\|_Y$ for all $f\in X$, $h\in G$. Then
\[
\|Tf\|_Y \leq \pi_p(T)\Big(\int_G |f(g)|^p\,d\nu(g) \Big)^{1/p}, \quad\, f\in X\,.
\]
\end{Lemm}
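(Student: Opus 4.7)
The plan is to combine the Pietsch domination theorem (recalled just above the lemma) with an averaging argument built from the translation invariance of the Haar measure $\nu$. Since $X$ is a closed subspace of $C(G)$ and $T\colon X\to Y$ is $p$-summing, Pietsch gives a~Borel probability measure $\mu$ on $G$ such that
\[
\|Tf\|_Y \leq \pi_p(T)\Big(\int_G |f(g)|^p\,d\mu(g)\Big)^{1/p}, \quad f\in X.
\]
The measure $\mu$ is not canonical, but the invariance hypothesis on $T$ will let us upgrade it to $\nu$.

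First, I fix $f\in X$ and $h\in G$. Because $X$ is translation invariant, $f_h\in X$, and by hypothesis $\|Tf\|_Y = \|Tf_h\|_Y$. Applying the Pietsch estimate to $f_h$ gives
\[
\|Tf\|_Y^p = \|Tf_h\|_Y^p \leq \pi_p(T)^p \int_G |f(g\cdot h)|^p\, d\mu(g).
\]
This inequality is valid for every $h\in G$, so I next integrate both sides with respect to $d\nu(h)$ over $G$. The left-hand side is a constant in $h$, while Fubini's theorem on the right gives
\[
\|Tf\|_Y^p \leq \pi_p(T)^p \int_G\!\Big(\int_G |f(g\cdot h)|^p\, d\nu(h)\Big) d\mu(g).
\]

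For each fixed $g\in G$, the translation invariance of the Haar measure $\nu$ yields
\[
\int_G |f(g\cdot h)|^p\, d\nu(h) = \int_G |f(h)|^p\, d\nu(h),
\]
so the inner integral is independent of $g$. Pulling it outside and using that $\mu$ is a probability measure gives exactly
\[
\|Tf\|_Y^p \leq \pi_p(T)^p \int_G |f(h)|^p\, d\nu(h),
\]
which is the claim. The argument is essentially routine once Pietsch is in hand; the only subtlety is the interchange of integrals and the cleanness of applying translation invariance in the inner variable, both of which are unproblematic because $f$ is continuous on the compact group $G$. There is no real obstacle beyond organizing these steps.
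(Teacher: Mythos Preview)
Your proof is correct and follows essentially the same approach as the paper: apply Pietsch domination to obtain a dominating probability measure $\mu$, use the hypothesis $\|Tf_h\|_Y=\|Tf\|_Y$ to average over $h\in G$ with respect to $\nu$, swap the order of integration by Fubini, and invoke translation invariance of $\nu$ to replace the inner integral by $\int_G |f|^p\,d\nu$. The paper's write-up differs only cosmetically, starting from $\|Tf\|_Y^p=\int_G\|Tf_h\|_Y^p\,d\nu(h)$ before invoking Pietsch inside the integral.
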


\noindent

\begin{proof}
Pietsch's domination theorem (mentioned above) combined with Fubini's theorem shows that there is a~probability Borel
measure $\mu$ on $G$ such that for all $f\in X$
\begin{align*}
\|Tf\|_{Y}^{p} & = \int_G \|Tf_h\|_{Y}^p \,d\nu(h) \leq \pi_p(T)^p \int_G\Big(\int_G |f_h(g)|^p\,d\mu(g)\Big)\,d\nu(h) \\
& = \pi_p(T)^p  \int_G\Big(\int_G |f_h(g)|^p\,d\nu(h) \Big)\,d\mu(g)\,.
\end{align*}
Since the Haar measure $\nu$ is translation invariant, it follows that
\begin{align*}
\|Tf\|_{Y}^{p} & = \pi_p(T)^p  \int_G\Big(\int_G |f_g(h)|^p\,d\nu(h) \Big)\,d\mu(g) \\
& = \pi_p(T)^p  \int_G\Big(\int_G |f(h)|^p\,d\nu(h) \Big)\,d\mu(g) = \pi_p(T)^p \int_G |f(h)|^p\,d\nu(h)\,. \,\, \qedhere
\end{align*}
\end{proof}

\smallskip

\noindent As usual,  the Fourier transform of $f\in L_1(G, \nu)$ is defined  by
\[
\widehat{f}(\gamma) := \int_G f(g) \overline{\gamma(g)}\,d\nu, \quad\, \gamma \in \widehat{G}\,.
\]
Let $X$ be any subspace of $L_1(G, \nu)$, and  $\Gamma \subset \widehat{G}$  a~nonempty subset. Then
\[
X_\Gamma : = \{f\in X \colon\, \widehat{f}(\gamma)= 0 \quad\, \text{for all\, $\gamma \notin \Gamma$}\}\,.
\]
Clearly, $X_\Gamma$ is a translation invariant subspace of $X$. Note that for $X=C(G)$ or $X=L_p(G, \nu)$
with $1\leq p<\infty$, every translation invariant subspace of $X$ has the form $X_\Gamma$ for some
$\Gamma\subset \widehat{G}$.

In what follows,  for simplicity of notation, we write $C_\Gamma$ instead of $C(G)_\Gamma$.
By  $\widehat{C_\Gamma} \subset \mathbb{K}^\Gamma$ we denote the linear space of all $(\widehat{f}(\gamma))_{\gamma \in \Gamma},
\, f \in C_\Gamma$, which equipped with the norm
\[
\|(\widehat{f}(\gamma))_{\gamma \in \Gamma}\|_{\widehat{C_\Gamma}}:= \|f\|_{C(G)}, \quad\, f\in C_\Gamma
\]
forms a Banach space. Throughout the paper, if $\xi \in \mathbb{K}^\Gamma$ and $F=(\mathbb{K}^\Gamma,\|\cdot\|)$ are
Banach spaces, then the mapping $M_\xi\colon C_\Gamma \to F$ (which we call  multiplier) is given by
\[
M_\xi f:= (\xi_\gamma \widehat{f}(\gamma))_{\gamma \in \Gamma}, \quad\, f\in C_\Gamma\,.
\]
The space of of all such  multipliers is denoted by $\mathcal{M}(C_\Gamma, F)$, and it obviously identifies with
the space $\mathcal{D}(\widehat{C_\Gamma}, F)$ of all diagonal operators from $\widehat{C_\Gamma}$ into $F$.

A~subset $\Gamma$ of $\widehat{G}$ is called a~$p$-Sidon set ($1\leq p<\infty$) if there is a~constant $C$ such that
\[
\Big(\sum_{\gamma \in \Gamma} |\widehat{f}(\gamma)|^p\Big)^{\frac{1}{p}} \leq C \|f\|_{\infty}, \quad\, f\in C_\Gamma\,.
\]
The least possible value of this constant is denoted $S_p(\Gamma)$ and  called the $p$-Sidon constant of $G$.

Let $\{-1,1\}$ be the compact discrete group with the Haar measure $\sigma_{1}(\{-1\}) = \sigma_{1}(\{1\}) = 1/2$, and
let $\mathbb{T}$ be unit circle equipped with normalized Lebesgue measure. We will primarily be interested in the case
when $G=\mathbb{T}^n$, $G= \{-1, 1\}^{N}$,
 $G=\{-1,1\}^{\infty}$ or $G = \mathbb{T}^{\infty}$.

The countable product $\{-1,1\}^{\infty}$ is a~compact abelian group with the pointwise  product and the product topology. The
Haar measure on $\{ -1,1\}^{\infty}$ is then  just the countable product of the measure $\sigma_{1}$. Clearly,
the dual group of $\{-1,1\}^{\mathbb{N}}$ is the set $\{\chi_{S}\colon S \in \mathcal{P}_{fin}(\mathbb{N})\}$, where
$S\in \mathcal{P}_{fin}(\mathbb{N})$ means that $|S|:={\rm{card}}(S)<\infty$, and
$\chi_{S}\colon \{ -1,1\}^{\infty} \rightarrow \{-1,1\}$ is defined by $\chi_{S}(x) = \prod_{n \in S}{x_{n}}$ for
all $x = (x_{n})_{n \in \mathbb{N}}$ in $\{ -1,1\}^{\infty}$.

The compact abelian group $\mathbb{T}^{\infty}$ carries  the pointwise product, the product topology, and the product
of the normalized Lebesgue measure on $\mathbb{T}^{\infty}$ as its Haar measure. Denote by $\mathbb{Z}^{(\mathbb{N})}$
the set of all sequences $\alpha=(\alpha_1, \ldots, \alpha_n, \ldots)$ of integers which vanish for $n$ large enough.
Then we have $\widehat{\mathbb{T}^\infty}=\mathbb{Z}^{(\mathbb{N})}$, where each $\alpha \in \mathbb{Z}^{(\mathbb{N})}$
is identified with the character $\gamma(z) = z^{\alpha}:=\prod_{j=1}^{\infty} z^{\alpha_j}$, $z \in \mathbb{T}^\infty$.

The subset of all sequences  $\alpha \in \mathbb{Z}^{(\mathbb{N})}$, for which all entries are either 0 or natural,
is denote by $\mathbb{N}_0^{(\mathbb{N})}$. We write $\Lambda^\leq(m,n) \subset \mathbb{N}_0^{(\mathbb{N})}$ for
the subset of all  $\alpha$ of length $n$ and with order $|\alpha| = \sum_{j=1}^{n}\alpha_j \leq m$, whereas $\Lambda^=(m,n)$
consists of all $\alpha$'s of length $n$ but  with order $|\alpha| = m$.

\medskip

\noindent{\bf Trigonometric polynomials.} \label{P1}
Given $n \in \mathbb{N}$ and $m \in \mathbb{N}_0$, we write $T(m,n)$ for the set of all multi indices in
$\{\alpha \in \mathbb{Z}^n\colon |\alpha| \leq m\}$, and  $\mathcal{T}_{\leq m}(\mathbb{T}^n)$ for the space of all
trigonometric polynomials
\[
P(z) = \sum_{\alpha \in T(m,n)} c_\alpha z^{\alpha}, \quad\, z\in \mathbb{T}^n
\]
on the $n$-dimensional torus $\mathbb{T}^n$ which have degree $\text{deg}(P) = \max \{|\alpha|; \, c_\alpha \neq 0\} \leq m$.
Clearly, $\mathcal{T}_{\leq m}(\mathbb{T}^n)$ together with the sup norm $\|\cdot\|_{\mathbb{T}^n}$ (also denoted by
$\|\cdot\|_{\infty}$) forms a Banach space

By $\mathcal{P}_{\leq m}(\mathbb{T}^n)$ we denote the closed subspace of $\mathcal{T}_{\leq m}(\mathbb{T}^n)$ of all
trigonometric analytic polynomials $P(z)= \sum_{\alpha \in \Lambda^{\leq}(m,n)} c_\alpha z^\alpha$ for all $z\in \mathbb{T}^n$.
The space $\mathcal{P}_{= m}(\mathbb{T}^n)$ is defined to be the closed subspace of all $m$-homogeneous polynomials $P$ given
by $P(z)= \sum_{\alpha \in \Lambda^{=}(m,n)} c_\alpha z^\alpha$.

Moreover, we are going to make use of the so-called 'hypercontractive' Bohnen\-blust-Hille inequality: There is some universal
constant $C >0$ such that, for each $m,n$ and $P \in \mathcal{P}_{\leq m}(\mathbb{T}^n)$
\begin{align} \label{BH}
\Big(\sum_{\alpha \in \Lambda^{\leq}(m,n)} |\widehat{P}(\alpha)|^{\frac{2m}{m+1}}\Big)^{\frac{m+1}{2m}}
\leq C^{\sqrt{m \log m}} \|P\|_\infty\,;
\end{align}
we refer to  \cite{BoHi31} (original form), \cite{DefantFrerickOrtegaOunaiesSeip} (hypercontractive form), \cite{Bohrradius}
(subexponential form), and within the context of Dirichlet series and holomorphic functions in infinitely many variables to the monograph~\cite{Defant}.

A~well-known consequence of Bernstein's inequality (see, e.g.,~\cite[Corollary~5.2.3]{QQ}) is that, for all positive integers
$n, m$ there is a~subset $F \subset \mathbb{T}^n$ of cardinality $\text{card}\,F \leq (1+ 20 m)^n$ such that, for every
$P \in \mathcal{T}_{\leq m}(\mathbb{T}^n)$, we have
\begin{equation*}
\label{bern-stein}
\sup_{z \in \mathbb{T}^n} |P(z)| \leq 2 \sup_{z \in F} |P(z)|\,.
\end{equation*}
In other terms, for $N = (1 + 20 m)^n$ the linear mapping
\begin{align}
\label{bernd}
I\colon  \mathcal{T}_{\leq m}(\mathbb{T}^n) \to \ell_\infty^N\,,\,\,\,\,\,I(P) := (P(z))_{z \in F}
\end{align}
is a $2$-embedding of $\mathcal{T}_{\leq m}(\mathbb{T}^n)$ into $\ell^N_\infty$.\\

\medskip

\section{A probabilistic proof}
We use the famous Kahane-Salem-Zygmund inequality (the KSZ-inequality, see e.g., \cite[Theorem~7.11]{Kahane}, \cite{QQ},
or \cite{Defant}, \cite{DefantMastylo}) to improve Theorem~\ref{Thm:KKKK}. Our argument is different from the original
proof of Theorem~\ref{Thm:KKKK}, which we are going to analyze in the next section.

\begin{Theo} \label{KSZinq}
 Let $(\Omega, \mathcal{A}, \mathbb{P})$ be a~probability measure space. Then there is a positive
constant $C$ such that, for each $m,n \in \mathbb{N}$ and for every  trigonometric random polynomial
 $P(\omega, z)= \sum_{\alpha \in T(m,n)} \varepsilon_\alpha(\omega) c_\alpha z^\alpha$,\,
$(\omega, z) \in \Omega \times \mathbb{T}^n$
one has
\[
\int \|P(\omega, \cdot)\|_{\mathcal{T}_{\leq m}(\mathbb{T}^n)}\,d\mathbb{P}(\omega)
\leq C \,\sqrt{n \log (1+m)}\,\|(c_\alpha)\|_{\ell_2(T(m,n))}\,,
\]
where $(\varepsilon_\alpha)_{\alpha\in T(m,n)}$ is a~sequence of Bernoulli variables on
$(\Omega, \mathcal{A}, \mathbb{P})$.
\end{Theo}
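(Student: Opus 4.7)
The plan is to combine a discretization of the sup norm via Bernstein's inequality (already recalled in the excerpt) with a standard sub-Gaussian maximal estimate for Rademacher sums.

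First I would invoke the Bernstein-type discretization from \eqref{bernd}: there exists a finite set $F\subset \mathbb{T}^n$ with $\operatorname{card} F\leq N:=(1+20m)^n$ such that, for every deterministic $Q\in\mathcal{T}_{\leq m}(\mathbb{T}^n)$,
\[
\|Q\|_{\mathcal{T}_{\leq m}(\mathbb{T}^n)} \leq 2\,\max_{z\in F}|Q(z)|.
\]
Applying this pointwise in $\omega$ to $P(\omega,\cdot)$ and taking expectations reduces the estimate to controlling $\mathbb{E}\max_{z\in F}|P(\omega,z)|$.

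Second, I would fix $z\in\mathbb{T}^n$ and observe that $P(\omega,z)=\sum_{\alpha\in T(m,n)}\varepsilon_\alpha(\omega)\,c_\alpha z^\alpha$ is a Rademacher sum with complex coefficients $d_\alpha:=c_\alpha z^\alpha$. Since $|z^\alpha|=1$ for $z\in\mathbb{T}^n$, one has $\sum_\alpha |d_\alpha|^2=\sum_\alpha|c_\alpha|^2=:\sigma^2$. Splitting into real and imaginary parts and applying the classical Hoeffding/sub-Gaussian moment bound to each, we obtain a tail estimate of the form
\[
\mathbb{P}\bigl(|P(\omega,z)|>t\bigr) \leq 4\exp\!\Bigl(-\tfrac{t^2}{4\sigma^2}\Bigr),\qquad t>0,
\]
uniformly in $z\in\mathbb{T}^n$.

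Third, I would combine this uniform sub-Gaussian tail with the elementary maximal inequality: if $X_1,\dots,X_N$ are random variables each satisfying $\mathbb{P}(|X_j|>t)\leq 4\exp(-t^2/(4\sigma^2))$, then
\[
\mathbb{E}\max_{1\leq j\leq N}|X_j| \leq C_0\,\sigma\sqrt{\log(1+N)}
\]
for a universal $C_0$ (proved by integrating the union-bounded tail, or by a truncation argument at level $\sigma\sqrt{\log N}$). Taking $X_j$ to be the values $P(\omega,z)$ at the $N\leq(1+20m)^n$ points $z\in F$ yields
\[
\mathbb{E}\max_{z\in F}|P(\omega,z)| \leq C_0\,\sigma\sqrt{\log(1+(1+20m)^n)} \leq C\,\sqrt{n\log(1+m)}\,\|(c_\alpha)\|_{\ell_2(T(m,n))},
\]
and combining with the discretization step completes the proof with an absolute constant.

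The only delicate point is the sub-Gaussian tail for the complex-valued Rademacher sum: handling the modulus rather than a real linear form forces one to decompose into real and imaginary parts (losing a harmless factor $\sqrt{2}$) or, equivalently, to invoke Khintchine for complex coefficients; everything else is bookkeeping.
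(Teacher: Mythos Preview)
Your proof is correct and follows the standard route to the Kahane--Salem--Zygmund inequality: discretize the torus via Bernstein's inequality, control each point value by a sub-Gaussian tail (Hoeffding for Rademacher sums, handling the complex case by splitting into real and imaginary parts), and then apply the union-bound maximal inequality over the $N\leq(1+20m)^n$ sample points. The final simplification $\sqrt{\log(1+(1+20m)^n)}\leq C\sqrt{n\log(1+m)}$ is harmless since $\log(1+20m)\leq\log 21+\log(1+m)$ and $\log(1+m)\geq\log 2$ for $m\geq 1$.

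Note, however, that the paper does not actually prove Theorem~\ref{KSZinq}: it is quoted as the classical Kahane--Salem--Zygmund inequality with references to Kahane, Queff\'elec--Queff\'elec, and the monograph~\cite{Defant}. So there is no ``paper's own proof'' to compare against. What you have written is essentially the textbook proof one finds in those references (in particular it is the argument behind \cite[Theorem~7.1]{Defant} and \cite[Chapter~5]{QQ}), and it fits neatly with the paper since the discretization step you use is exactly the embedding~\eqref{bernd} already stated there.
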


\medskip
As we will see in Theorem~\ref{Thm:K}, the following consequence of the KSZ-inequality gives
Kislyakov's multiplier Theorem~\ref{Thm:KKKK} as a~particular case.

\begin{Theo} \label{Thm:SK}
There is a positive constant $C$ such that for all $n,m \in \mathbb{N}$, all Banach spaces
$F=(\mathbb{C}^{T(m,n)},\|\cdot\|)$ and for all sequences  $\xi=(\xi_\alpha)_{\alpha \in T(m,n)}$
\begin{align*}
\sup_{\|\mu\|_{\ell_2(T(m,n))}\leq 1} \|(\mu_{\alpha} \xi_\alpha)_{\alpha \in T(m,n)}\|_F
\leq C \,\sqrt{n \log (1+m)}\,\big\|M_\xi\colon \mathcal{T}_{\leq m}(\mathbb{T}^n)\to
F\big\|\,.
\end{align*}
\end{Theo}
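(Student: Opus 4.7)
The plan is to apply the Kahane--Salem--Zygmund inequality of Theorem~\ref{KSZinq} to a Rademacher-randomized polynomial built from $\mu$, then propagate the resulting $L_\infty$-estimate through the operator $M_\xi$. Fix $\mu=(\mu_\alpha)_{\alpha\in T(m,n)}$ with $\|\mu\|_{\ell_2(T(m,n))}\leq 1$, and on some probability space $(\Omega,\mathcal{A},\mathbb{P})$ equipped with a Rademacher sequence $(\varepsilon_\alpha)_{\alpha\in T(m,n)}$, introduce the random trigonometric polynomial
\[
P_\omega(z):=\sum_{\alpha\in T(m,n)}\varepsilon_\alpha(\omega)\,\mu_\alpha\,z^\alpha,\qquad z\in\mathbb{T}^n.
\]
Theorem~\ref{KSZinq} immediately gives $\mathbb{E}_\omega\|P_\omega\|_\infty \leq C\sqrt{n\log(1+m)}\,\|\mu\|_{\ell_2}\leq C\sqrt{n\log(1+m)}$.

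Since $M_\xi P_\omega=(\varepsilon_\alpha(\omega)\mu_\alpha\xi_\alpha)_\alpha\in F$ and $\|M_\xi P_\omega\|_F\leq \|M_\xi\|\,\|P_\omega\|_\infty$, averaging in $\omega$ yields
\[
\mathbb{E}_\omega\,\bigl\|(\varepsilon_\alpha(\omega)\mu_\alpha\xi_\alpha)_\alpha\bigr\|_F\;\leq\;C\sqrt{n\log(1+m)}\,\|M_\xi\|.
\]
The last step is to convert this bound for the sign-randomized sequence into a bound for $(\mu_\alpha\xi_\alpha)_\alpha$ and then take the supremum over $\mu$. The decisive observation is that the substitution $\nu:=\varepsilon(\omega)\cdot\mu$ preserves the $\ell_2$-norm, so as $\mu$ varies over the unit ball, so does $\nu$; the supremum on the left-hand side of Theorem~\ref{Thm:SK} is therefore invariant under the coordinate sign flips produced by the KSZ randomization, and the Rademacher-averaged inequality transfers to the stated supremum.

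The main obstacle I foresee is making this last reduction precise for an arbitrary Banach-space norm on $\mathbb{C}^{T(m,n)}$. For norms that are $1$-unconditional with respect to the coordinate basis---the natural setting for the applications pursued later in the paper, where $F$ is a Banach sequence space---the step is immediate, since $\|\varepsilon\cdot x\|_F=\|x\|_F$ for every sign vector. In general one invokes Lemma~\ref{oneone}, which identifies the left-hand supremum with $\pi_2(M_\xi\colon \mathcal{T}_{\leq m}(\mathbb{T}^n)\to F)$, so that the displayed averaged bound can be reformulated in intrinsically sign-invariant terms; taking supremum over $\mu$ then gives the claim.
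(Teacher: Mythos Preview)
Your approach coincides with the paper's: factor $D_\xi\colon\ell_2(T(m,n))\to F$ through $L_1(\mathbb{P},\mathcal{T}_{\leq m}(\mathbb{T}^n))$ by sending $(c_\alpha)$ to the random polynomial $\sum_\alpha\varepsilon_\alpha(\cdot)\,c_\alpha z^\alpha$, bound this leg by KSZ, and bound the remaining leg $L_\xi\colon\sum_\alpha\varepsilon_\alpha(\cdot)\,c_\alpha z^\alpha\mapsto(c_\alpha\xi_\alpha)$ by $\|M_\xi\|$. The paper's justification of that last bound is the pointwise inequality $\|(c_\alpha\xi_\alpha)\|_F\le\|M_\xi\|\,\|P(\omega,\cdot)\|_\infty$ for every $\omega$, followed by integration.

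You are right to isolate the sign-removal step as the obstacle, and in fact the paper's pointwise inequality hides exactly the same issue: applying $M_\xi$ to $P(\omega,\cdot)$ produces $(\varepsilon_\alpha(\omega)c_\alpha\xi_\alpha)$, not $(c_\alpha\xi_\alpha)$, so one is tacitly using that these two vectors have the same $F$-norm, i.e.\ that the coordinate basis of $F$ is $1$-unconditional. Under that hypothesis---which holds for every $F$ actually used later in the paper---your argument and the paper's are complete and essentially identical.

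Your proposed repair for general $F$ via Lemma~\ref{oneone} does not work, however. That lemma identifies the left-hand side with $\pi_2(M_\xi)=\sup_{\|\mu\|_2\le1}\|(\mu_\alpha\xi_\alpha)\|_F$, whereas what your KSZ step controls is $\sup_{\|\mu\|_2\le1}\mathbb{E}_\omega\|(\varepsilon_\alpha(\omega)\mu_\alpha\xi_\alpha)\|_F$, which can be strictly smaller. Indeed there is no repair: take $n=1$, $\xi\equiv1$, and $\|x\|_F:=\bigl|\sum_\alpha x_\alpha\bigr|+\epsilon\,\|x\|_\infty$. Since $\sum_\alpha\widehat P(\alpha)=P(1)$ one gets $\|M_\xi\|\le1+\epsilon$, while $\mu_\alpha=(2m+1)^{-1/2}$ gives $\|(\mu_\alpha\xi_\alpha)\|_F\ge\sqrt{2m+1}$; so the inequality of Theorem~\ref{Thm:SK} fails for this $F$. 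The statement should be read with $F$ assumed $1$-unconditional in its coordinates, in which case your proof is correct and matches the paper's.
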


\vspace{1mm}

\begin{proof}
Let $(\varepsilon_\alpha)_{\alpha\in T(m,n)}$ be  a~sequence of Bernoulli variables on some probability space
$(\Omega, \mathcal{A}, \mathbb{P})$, and consider the following three operators:
\begin{align*}
&
R_\xi\colon \ell_2(T(m,n))  \to F\,,\,\, (c_\alpha) \mapsto (c_\alpha \xi_\alpha)
\\[2ex]&
\phi_{K\!SZ}\colon \ell_2(T(m,n))  \to L_1(\mathbb{P}, \mathcal{T}_{\leq m}(\mathbb{T}^n))
\,,\,\,
 (c_\alpha)
 \mapsto
\sum_{\alpha \in T(m,n) } \varepsilon_\alpha(\cdot) c_\alpha z^\alpha
\\&
L_\xi\colon L_1(\mathbb{P}, \mathcal{T}_{\leq m}(\mathbb{T}^n))
\to  F\,,\,\,\,
\sum_{\alpha \in T(m,n)} \varepsilon_\alpha(\cdot) c_\alpha z^\alpha
\mapsto (c_\alpha \xi_\alpha)\,.
\end{align*}
Clearly, $R_\xi = L_\xi \circ \phi_{K\!SZ}$ and
\begin{equation*}
\|R_\xi\| = \sup_{\|\mu\|_{\ell_2(T(m,n))}\leq 1} \|(\mu_{\alpha} \xi_\alpha)_{\alpha \in T(m,n)}\|_F\,.
\end{equation*}
Moreover, by the KSZ-inequality from Theorem~\ref{KSZinq}, we get
\[
\|\phi_{K\!SZ}\| \leq C \sqrt{n \log (1+m)}\,.
\]
We claim that
\[
\|L_\xi\|  \leq \,\big\|M_\xi\colon \mathcal{T}_{\leq m}(\mathbb{T}^n)\to
F\big\|\,.
\]
Indeed, given a~random polynomial  $P\in L_1(\mathbb{P}, \mathcal{T}_{\leq m}(\mathbb{T}^n))$ given by
\[
P(\omega,z)=\sum_{\alpha \in T(m,n)} \varepsilon_\alpha(\omega) c_\alpha z^{\alpha}, \quad\, (\omega, z) \in
\Omega \times  \mathbb{T}^n\,.
\]
Then, for every $\omega \in \Omega$, we have
\[
\|(c_\alpha \xi_\alpha)\|_F  \leq \big\|D_\xi\colon \mathcal{T}_{\leq m}(\mathbb{T}^n)\to F\big\|
\big\| P(\omega, \cdot) \big\|_{\mathcal{T}_{\leq m}(\mathbb{T}^n)}\,.
\]
Now integrating proves the claim.  All together yields
\begin{align*}
\sup_{\|\mu\|_{\ell_2(T(m,n))}\leq 1} & \|(\mu_{\alpha} \xi_\alpha)_{\alpha \in T(m,n)}\|_F \\
& \leq \|L_\xi\|\, \|\phi_{KSZ}\| \leq C \sqrt{n \log (1+m)}\,\big\|M_\xi\colon \mathcal{T}_{\leq m}(\mathbb{T}^n)\to F\big\|\,,
\end{align*}
and so this completes the proof.
\end{proof}

\medskip
Now by Theorem~\ref{Thm:SK} we deduce that Kislyakov's original Theorem~\ref{Thm:KKKK} is a~special
case of the following more general result.

\begin{Coro} \label{Thm:K}
Let $V$ be a closed subspace of $L_\infty(\mathbb{T}^\infty)$.
Then for every   $\ell_1(\mathbb{Z}^{(\mathbb{N})})$-multiplier $\xi = (\xi_\alpha)_{\alpha \in \mathbb{Z}^{(\mathbb{N})}}$ of $V$ we have
\begin{align*}
\sup_{n,m \in \mathbb{ N}}\,\,\,\,\frac{1}{\sqrt{n \log (1+m)}}
\bigg(\sum_{\alpha \in \Gamma \cap T(m,n)} & |\xi_{\alpha}|^2\bigg)^{1/2}
\,\,\,< \infty\,,
\end{align*}
where
\[
 \Gamma = \bigcup_{f \in V} \supp \widehat{f}\subset \mathbb{Z}^{(\mathbb{N})} \,.
\]
In particular,
every $\ell_1(\mathbb{N}_0^{(\mathbb{N})})$-multiplier of $H^\infty(\mathbb{T}^\infty)$
satisfies the preceding estimate whenever  we replace $\Gamma \cap T(m,n)$ by $\Lambda^{\leq}(m,n)$, and
every $\ell_1(\mathbb{N}_0^{(\mathbb{N})})$-multiplier of $C^A(\mathbb{T}^\infty)$ satisfies the
estimate from \eqref{start}\,.
\end{Coro}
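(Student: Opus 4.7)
The plan is to adapt the scheme of the proof of Theorem~\ref{Thm:SK} so that the random trigonometric polynomial sits inside $V$ rather than in the larger space $\mathcal{T}_{\leq m}(\mathbb{T}^n)$. Fix $n, m \in \mathbb{N}$ and let $(\varepsilon_\alpha)_{\alpha \in \Gamma \cap T(m,n)}$ be a Bernoulli sequence on some probability space $(\Omega, \mathcal{A}, \mathbb{P})$. Given scalars $(c_\alpha)_{\alpha \in \Gamma \cap T(m,n)}$, form the random polynomial
\[
P_\omega(z) := \sum_{\alpha \in \Gamma \cap T(m,n)} \varepsilon_\alpha(\omega)\, c_\alpha\, z^\alpha.
\]
The first step is to observe that $P_\omega \in V$ for every $\omega$; this is the very point of restricting the sum to $\Gamma$. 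In all the ``in particular'' cases this is automatic: $\Gamma \cap T(m,n) = \Lambda^{\leq}(m,n) \subset \mathbb{N}_0^n$, so $P_\omega$ is a genuine analytic polynomial in $n$ variables and hence belongs to both $H^\infty(\mathbb{T}^\infty)$ and $C^A(\mathbb{T}^\infty)$.

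With $P_\omega \in V$ at hand, the multiplier hypothesis yields $\|M_\xi P_\omega\|_{\ell_1} \leq K \|P_\omega\|_\infty$ pointwise in $\omega$, where $K := \|M_\xi \colon V \to \ell_1\|$. Since $\widehat{P_\omega}(\alpha) = \varepsilon_\alpha(\omega) c_\alpha$ on $\Gamma \cap T(m,n)$ and vanishes elsewhere, and $|\varepsilon_\alpha(\omega)| = 1$, the left-hand side equals the deterministic quantity $\sum_\alpha |c_\alpha|\,|\xi_\alpha|$. Integrating in $\omega$ and then applying the Kahane--Salem--Zygmund inequality (Theorem~\ref{KSZinq}) to the right-hand side gives
\[
\sum_{\alpha \in \Gamma \cap T(m,n)} |c_\alpha|\,|\xi_\alpha| \leq K \int_\Omega \|P_\omega\|_\infty\, d\mathbb{P}(\omega) \leq C K \sqrt{n\log(1+m)}\,\|(c_\alpha)\|_{\ell_2}.
\]
Finally, taking the supremum over $\|(c_\alpha)\|_{\ell_2} \leq 1$ and invoking Cauchy--Schwarz duality yields the announced bound
\[
\Big(\sum_{\alpha \in \Gamma \cap T(m,n)} |\xi_\alpha|^2\Big)^{1/2} \leq C K \sqrt{n\log(1+m)}.
\]

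The ``in particular'' clauses are then immediate specializations: for $V = H^\infty(\mathbb{T}^\infty)$ and $V = C^A(\mathbb{T}^\infty)$ one has $\Gamma = \mathbb{N}_0^{(\mathbb{N})}$, hence $\Gamma \cap T(m,n) = \Lambda^{\leq}(m,n)$. To recover Kislyakov's original formulation~\eqref{start} for $C^A(\mathbb{T}^\infty)$, I would note that every $\alpha \in \mathbb{N}_0^n$ with $\max_i \alpha_i \leq d$ satisfies $|\alpha| \leq dn$; substituting $m = dn$ embeds the ``box'' $\{\alpha \in \mathbb{N}_0^n : \max_i \alpha_i \leq d\}$ into $\Lambda^{\leq}(dn,n)$, and the prefactor $\sqrt{n\log(1+m)}$ becomes $\sqrt{n\log(1+dn)}$, which reproduces \eqref{start}.

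The only delicate step is the first one, namely guaranteeing $P_\omega \in V$. For $H^\infty$ and $C^A$ this is trivial, but for a completely arbitrary closed subspace $V$ the argument tacitly requires that $V$ contains every finite polynomial with Fourier support in $\Gamma$---a condition that is automatic for the translation-invariant subspaces of interest but would otherwise need to be imposed.
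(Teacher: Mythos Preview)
Your argument is correct and follows the same strategy as the paper: truncate to $\Gamma$ and invoke the KSZ inequality, then recover Kislyakov's box formulation~\eqref{start} via the substitution $m=dn$. The paper is slightly terser---it replaces $\xi$ by $\xi'$ (zeroed outside $\Gamma$) and cites Theorem~\ref{Thm:SK} as a black box---whereas you rerun the proof of Theorem~\ref{Thm:SK} with the random polynomial confined to $\Gamma\cap T(m,n)$ so as to apply the multiplier bound on $V$ directly. Your remark about the ``delicate step'' is apt and in fact applies equally to the paper's version: invoking Theorem~\ref{Thm:SK} requires a bound on $\|M_{\xi'}\colon\mathcal{T}_{\leq m}(\mathbb{T}^n)\to\ell_1\|$, which the paper tacitly identifies with the multiplier norm on $V$; this identification is immediate for the translation-invariant subspaces $H^\infty(\mathbb{T}^\infty)$ and $C^A(\mathbb{T}^\infty)$ (since every polynomial with spectrum in $\Gamma$ then lies in $V$), but not for an arbitrary closed $V$.
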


\begin{proof}
Let us prove the first statement. For a~fixed $\xi = (\xi_{\alpha})_{\alpha \in \mathbb{Z}^{(\mathbb{N})}}$,  we can
define a~new sequence $\xi' = (\xi_{\alpha}')_{\alpha \in \mathbb{Z}^{(\mathbb{N})}}$ with $\xi'_{\alpha} = \xi_{\alpha}$
if $\alpha \in \Gamma$ and $\xi'_{\alpha} = 0$ otherwise. It is clear that $\xi$ is a $\ell_{1}(\mathbb{Z}^{(\mathbb{N})})$-multiplier
if and only if so does $\xi'$. Thus, applying  Theorem~\ref{Thm:SK} to $\xi'$ we immediately conclude the result. For the second
statement note that
\begin{align*}
\Big(\sum_{\alpha \in \mathbb{N}_0^n \colon \max\{\alpha_1,...,\alpha_n\}\leq m}& |\xi_{\alpha}|^2\Big)^{\frac{1}{2}}
\leq \Big(\sum_{|\alpha| \leq mn} |\xi_{\alpha}|^2\Big)^{\frac{1}{2}} \\
& \leq C \,\sqrt{n \log (1+mn)}\,\,\big\|M_\xi\colon \mathcal{P}_{\leq mn}(\mathbb{T}^n) \to \ell_1(\Lambda^{\leq }(mn,n))\big\| \\
& \leq C \,\sqrt{n \log (1+mn)}\,\,\big\|M_\xi \colon C^A(\mathbb{T}^n) \to \ell_1(\mathbb{N}_0^n)\big\|\,,
\end{align*}
which completes the argument.
\end{proof}

\medskip

Note that Corollary~\ref{Thm:SKS} below recovers the preceding result -- using a different technique of proof with
the advantage of  slightly more explicit  constants.

\medskip

\section{Variants for compact abelian groups}

Based on local Banach space theory and inspired by ideas of \cite{Kisl} from local Banach space theory, we improve  Theorem~\ref{Thm:SK}
and its Corollary~\ref{Thm:K}. Our more abstract approach has the advantage that it allows to consider more general compact abelian groups
instead of only the multidimensional torus. The main result here is Theorem~\ref{Thm:KK} below.

\medskip

We start with the following basic lemma which is a simple consequence of Lemma~\ref{one} and  crucial for our purpose.

\medskip
\begin{Lemm} \label{oneone}
Let $G$ be a compact abelian group, $\Gamma$ a~finite subset in $\widehat{G}$, and $F:=(\mathbb{C}^{\Gamma}, \|\cdot\|)$
a~Banach space. Then for every $\xi = (\xi_\gamma)_{\gamma \in \Gamma}$
\[
\pi_2(M_\xi\colon  C_\Gamma \to F) = \|D_\xi\colon \ell_2(\Gamma) \to F\|
=\sup_{\|\mu\|_{\ell_2(\Gamma)}\leq 1} \|(\mu_{\gamma} \xi_\gamma)_{\gamma \in \Gamma}\|_F\,.
\]
\end{Lemm}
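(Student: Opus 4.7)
The second equality is simply the definition of the operator norm of $D_\xi \colon \ell_2(\Gamma) \to F$, so the whole issue is the first equality, which I would split into two bounds.

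For the upper bound $\pi_2(M_\xi) \leq \|D_\xi\|$: for every $f \in C_\Gamma$, Plancherel and the definition of $\|D_\xi\|$ yield
\[
\|M_\xi f\|_F = \|D_\xi \widehat{f}\|_F \leq \|D_\xi\|\,\|\widehat{f}\|_{\ell_2(\Gamma)} = \|D_\xi\|\,\Bigl(\int_G |f|^2\, d\nu\Bigr)^{1/2}.
\]
Since $\pi_2(\mathrm{id}\colon C(G) \to L_2(G,\nu)) \leq 1$ is standard, this pointwise domination factors $M_\xi$ through $L_2(\nu)$ and the ideal property of $\pi_2$ gives $\pi_2(M_\xi\colon C_\Gamma \to F) \leq \|D_\xi\colon \ell_2(\Gamma) \to F\|$. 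Equivalently one can write $M_\xi = D_\xi \circ \mathcal{F} \circ \iota$ with $\iota \colon C_\Gamma \hookrightarrow L_2(\nu)_\Gamma$ satisfying $\pi_2(\iota)\le 1$, and $\mathcal{F}$ the Plancherel isometry onto $\ell_2(\Gamma)$, then invoke the same ideal property.

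For the reverse bound $\|D_\xi\| \leq \pi_2(M_\xi)$, I would apply Lemma~\ref{one} with $T = M_\xi$ and $p = 2$. The only point to verify is the translation-invariance hypothesis $\|M_\xi f_h\|_F = \|M_\xi f\|_F$: from the identity $\widehat{f_h}(\gamma) = \gamma(h)\widehat{f}(\gamma)$ one obtains $M_\xi f_h = (\gamma(h)\widehat{f}(\gamma)\xi_\gamma)_{\gamma \in \Gamma}$, which differs from $M_\xi f$ only by the coordinate-wise action of the unimodular sequence $(\gamma(h))_{\gamma\in\Gamma}$, under which the norm of the sequence space $F$ is invariant. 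Lemma~\ref{one} then reads
\[
\|M_\xi f\|_F \leq \pi_2(M_\xi)\,\Bigl(\int_G |f|^2\,d\nu\Bigr)^{1/2}, \qquad f\in C_\Gamma,
\]
and specialising to $f = \sum_{\gamma}c_\gamma \gamma$ (so that $M_\xi f = D_\xi c$ and the right-hand $L_2$-norm equals $\|c\|_{\ell_2(\Gamma)}$) yields $\|D_\xi c\|_F \leq \pi_2(M_\xi)\,\|c\|_{\ell_2(\Gamma)}$ for every $c$, i.e.\ $\|D_\xi\| \leq \pi_2(M_\xi)$.

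The only non-routine step is verifying the invariance $\|M_\xi f_h\|_F = \|M_\xi f\|_F$ required by Lemma~\ref{one}; once this observation is in place, each inequality is a one-line consequence of Plancherel, the definition of the operator norm of $D_\xi$, and the Pietsch-domination framework already packaged inside Lemma~\ref{one}.
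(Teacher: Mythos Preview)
Your argument matches the paper's line for line: the bound $\pi_2(M_\xi)\le\|D_\xi\|$ via the factorization through $L_2(\nu)$ and the ideal property of $\pi_2$, and the bound $\|D_\xi\|\le\pi_2(M_\xi)$ via Lemma~\ref{one} together with the orthonormality of the characters. You are in fact more careful than the paper in isolating the hypothesis $\|M_\xi f_h\|_F=\|M_\xi f\|_F$ that Lemma~\ref{one} requires.

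The one point to flag is your justification of that hypothesis: you assert that the norm of $F$ is invariant under coordinatewise multiplication by the unimodulars $(\gamma(h))_{\gamma\in\Gamma}$, but the lemma as stated only assumes that $F=(\mathbb{C}^\Gamma,\|\cdot\|)$ is \emph{some} Banach space, and this invariance need not hold. Without it the equality can actually fail: take $G=\{-1,1\}$, $\Gamma=\widehat G$, $\xi=(1,1)$ and $\|(a,b)\|_F=|a+b|+\delta|a-b|$ with $0<\delta<1$; a direct computation gives $\|D_\xi\colon\ell_2(\Gamma)\to F\|=\sqrt{2(1+\delta^2)}$ whereas $\pi_2(M_\xi\colon C_\Gamma\to F)=1+\delta$. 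The paper's own proof glosses over exactly the same point. In every application in the paper $F$ is an $\ell_p$-space or an interpolation space between such, where the unimodular invariance is automatic; the clean fix is simply to add it as a standing hypothesis on $F$.
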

\begin{proof}
The second equality is obvious, and from Lemma~\ref{one} and the orthogonality of the  characters
in $L_2(\nu)$ (where $\nu$ denotes the normalized Haar measure on $G$), we easily deduce  that
\[
\sup_{\|\mu\|_{\ell_2(\Gamma)}\leq 1} \|(\mu_{\gamma} \xi_\gamma)_{\gamma \in \Gamma}\|_F
\leq \pi_2(M_\xi\colon C_\Gamma \to F\,.
\]
To see the reverse estimate, note that for every $f \in C_\Gamma$ one has
\begin{align*}
\|M_\xi f\|_F = \|(\xi_\gamma \widehat{f}(\gamma))\|_F & \leq
\|D_\xi\colon \ell_2(\Gamma) \to F\|\,\|(\widehat{f}(\gamma))\|_{\ell_2(\Gamma)} \\
& = \|D_\xi\colon \ell_2(\Gamma) \to F\|\,
\Big(\int_{G} |\widehat{f}(\gamma)|^2d\nu(\gamma)\Big)^{\frac{1}{2}}\,.
\end{align*}
Then the definition of the $2$-summing norm shows that
\[
\pi_2(M_\xi\colon  C_\Gamma \to F) \leq  \|D_\xi: \ell_2(\Gamma) \to F)\| \,. \qedhere
\]
\end{proof}

\medskip

As a very first application of the preceding lemma we obtain an  interesting reformulation of Theorem~\ref{Thm:SK}.
\medskip

\begin{Coro} \label{coroKSZ}
There is $C >0$ such that for all  $n,m \in \mathbb{N}$, all Banach spaces  $F=(\mathbb{C}^{T(m,n)},\|\cdot\|)$,
 and all   $\xi=(\xi_\alpha)_{\alpha \in T(m,n)}$
\begin{align*}
\pi_2\big(M_\xi\colon \mathcal{T}_{\leq m}(\mathbb{T}^n)\to
F\big)
\leq
C \,\sqrt{n \log(1+ m)}\,\big\|M_\xi\colon \mathcal{T}_{\leq m}(\mathbb{T}^n)\to
F\big\|\,.
\end{align*}
\end{Coro}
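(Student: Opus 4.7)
The plan is to recognize that Corollary~\ref{coroKSZ} is essentially a one-line consequence obtained by combining the two tools immediately preceding it: the identification of $\pi_2(M_\xi)$ with a diagonal operator norm from Lemma~\ref{oneone}, and the Kahane--Salem--Zygmund estimate of Theorem~\ref{Thm:SK}.

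First I would specialize Lemma~\ref{oneone} to $G = \mathbb{T}^n$ and $\Gamma = T(m,n) \subset \mathbb{Z}^n = \widehat{\mathbb{T}^n}$. Under this choice the translation invariant subspace $C_\Gamma$ coincides with $\mathcal{T}_{\leq m}(\mathbb{T}^n)$ equipped with the sup norm, so the lemma yields
\[
\pi_2\big(M_\xi\colon \mathcal{T}_{\leq m}(\mathbb{T}^n) \to F\big) = \sup_{\|\mu\|_{\ell_2(T(m,n))} \leq 1} \|(\mu_\alpha \xi_\alpha)_{\alpha \in T(m,n)}\|_F.
\]
Second I would invoke Theorem~\ref{Thm:SK}, which bounds exactly this supremum by $C\sqrt{n\log(1+m)}\,\|M_\xi\colon \mathcal{T}_{\leq m}(\mathbb{T}^n) \to F\|$. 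Chaining the equality and the inequality gives the stated bound with the same universal constant $C$ as in Theorem~\ref{Thm:SK}.

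There is really no obstacle to address: both ingredients have already been established, and the only thing to check is that the spaces line up, namely that $\mathcal{T}_{\leq m}(\mathbb{T}^n)$ is precisely the space $C_\Gamma$ for $\Gamma = T(m,n)$, which is immediate from the definition of $\mathcal{T}_{\leq m}(\mathbb{T}^n)$ as the closed span in $C(\mathbb{T}^n)$ of the characters $z \mapsto z^\alpha$ with $|\alpha|\leq m$. Thus the proof amounts to applying Lemma~\ref{oneone} to rewrite the left-hand side and then applying Theorem~\ref{Thm:SK} to bound it.
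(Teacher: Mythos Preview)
Your proposal is correct and matches the paper's own approach exactly: the paper states Corollary~\ref{coroKSZ} as ``a very first application of the preceding lemma'' and an ``interesting reformulation of Theorem~\ref{Thm:SK}'', which is precisely the combination of Lemma~\ref{oneone} (specialized to $G=\mathbb{T}^n$, $\Gamma=T(m,n)$, so that $C_\Gamma=\mathcal{T}_{\leq m}(\mathbb{T}^n)$) with Theorem~\ref{Thm:SK}.
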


\medskip

In view of Bernstein's theorem from \eqref{bernd}, the next result is a strong extension despite the cotype assumption.
It will allow us to get the Kislyakov type multiplier theorems (in particular Theorem~\ref{Thm:KK}) for compact abelian
groups different from the multidimensional torus.

\medskip

\begin{Prop} \label{two}
\label{pi2lem} Let  $I\colon X \hookrightarrow \ell_\infty^N$ be a~$\lambda$-embedding and $F$ a~Banach space. Then for
every operator $T\colon X \to  F$
\[
\pi_2(T) \leq e^{2} \lambda\, C_2(F)\,\sqrt{1+\ln N}\,\|T\|\,.
\]
\end{Prop}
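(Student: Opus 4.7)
The approach will be to estimate $\pi_2(T)$ directly from its definition, combining three ingredients: the cotype~$2$ inequality in $F$, the $\lambda$-embedding $I$ (rescaled so that $\|I\|\leq 1$ and $\|I^{-1}\colon I(X)\to X\|\leq\lambda$, so that $\|x\|_X\leq\lambda\,\|Ix\|_\infty$ for every $x\in X$), and a Rademacher maximal inequality in $\ell_\infty^N$. Notably, no extension of $T$ to $\ell_\infty^N$ is required; the whole argument stays on $X$.

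Given any finite family $x_1,\ldots,x_k\in X$, I would first apply the cotype~$2$ inequality to the vectors $Tx_j\in F$ and pull $\|T\|$ outside the Bernoulli average:
\[
\Bigl(\sum_j\|Tx_j\|_F^2\Bigr)^{1/2}
\leq C_2(F)\,\|T\|\,\Bigl(\int_0^1\Bigl\|\sum_j r_j(t)x_j\Bigr\|_X^2 dt\Bigr)^{1/2}.
\]
The embedding then replaces the $X$-norm by $\lambda$ times the sup-norm in $\ell_\infty^N$ applied to $Ix_j=(a_j(i))_{i=1}^N$, so the task reduces to bounding the quantity $\bigl(\int_0^1\|\sum_j r_j(t)Ix_j\|_\infty^2\,dt\bigr)^{1/2}$ by the coordinate-wise $\ell_2$-norm $\max_i\bigl(\sum_j|a_j(i)|^2\bigr)^{1/2}$.

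For this last step I would use the standard Rademacher maximum inequality: Khintchine yields $\|\sum_j r_j\,a_j(i)\|_{L_p}\leq\sqrt{p}\,\bigl(\sum_j|a_j(i)|^2\bigr)^{1/2}$ for every coordinate $i$, and passing to the maximum via $\|\max_i Y_i\|_{L_2}\leq\|\max_i Y_i\|_{L_p}\leq N^{1/p}\max_i\|Y_i\|_{L_p}$ followed by an optimisation of $p$ around $\ln N$ gives a bound of the form $C_0\sqrt{1+\ln N}\,\max_i\bigl(\sum_j|a_j(i)|^2\bigr)^{1/2}$. Since the extreme points of $B_{\ell_1^N}$ are the $\pm e_i$, this last supremum equals $\sup_{y^*\in B_{\ell_1^N}}\bigl(\sum_j|y^*(Ix_j)|^2\bigr)^{1/2}$, and as $y^*\circ I$ lies in the unit ball of $X^*$ (since $\|I\|\leq 1$), it is in turn dominated by the weak-$\ell_2$ norm of $(x_j)_j$ in $X$. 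Chaining the three steps and invoking the definition of $\pi_2$ then produces the claimed inequality.

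The only delicate point is calibrating the absolute constant so that it collapses to the advertised $e^2$: the architecture of the proof (cotype, embedding, maximum inequality) is entirely standard and each step is essentially forced, but the explicit value of the constant rests on a careful choice of the exponent $p$ in the Khintchine-plus-union-bound step together with the $L_1$--$L_2$ comparison implicit in the cotype inequality. I expect this bookkeeping to be the only non-mechanical part of the argument.
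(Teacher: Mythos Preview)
Your argument is correct, but it is organised differently from the paper's.  The paper does not work from the definition of $\pi_2$ via a Rademacher maximal inequality; instead it invokes the known comparison $\pi_2(S)\le K_p\,C_2(F)\,\pi_p(S)$ with $K_p\le\sqrt{p}$ (cotype and Khintchine are hidden inside this cited result of Pisier), factors $T$ through $I(X)\subset\ell_\infty^N$, and then bounds $\pi_{p}(\mathrm{id}_{\ell_\infty^N})$ for the special exponent $p=h_N:=\sum_{j\le N}1/j$ by means of the ``harmonic'' measure $\mu_N(\{j\})=1/j$: one has $\|\xi\|_{\ell_\infty^N}\le e\,\|\xi\|_{L_{h_N}(\mu_N)}$, whence $\pi_{h_N}(\mathrm{id}_{\ell_\infty^N})\le e\,h_N^{1/h_N}\le e^2$, and combining with $\sqrt{h_N}\le\sqrt{1+\ln N}$ gives exactly the stated constant.

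What each approach buys: the paper's route is modular (one citation plus one explicit $\pi_p$ computation) and makes the constant $e^2$ drop out mechanically from the choice $p=h_N$ and the measure $\mu_N$.  Your route is more elementary and self-contained---no black-box comparison theorem---and the Khintchine-plus-union-bound step, optimised at $p\approx 2\ln N$, actually yields a constant of order $\sqrt{2e}$, comfortably below $e^2$; the only cost is that you must do the small-$N$ bookkeeping by hand, as you note.
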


\medskip

\begin{proof}
Define for each $N \in \mathbb{N}$  the $N$-th harmonic number
$
h_N := \sum_{j=1}^N \frac{1}{j}\,,
$
and the discrete measure $\mu_N$ on the power set of $\{1, \ldots, N\}$ given by  $\mu_N(\{j\}) := \frac{1}{j}$.
for each $j\in \{1,\ldots, N\}$. In what follows we need the elementary observations that
$
\log N < h_N \leq 1+\log N,
$
and for  every $\xi = (\xi_i)\in \mathbb{C}^N$
\[
\|\xi\|_{\ell_\infty^N} \leq e \,\|\xi\|_{L_{h_N}(\mu_N)}\,.
\]
Indeed, if $\| \xi\|_{\ell_{\infty}^{N}} = |\xi_{k}|$ for some $k\in \{1, \ldots, N\}$, then
\[
\Big(\sum_{j=1}^{N} \frac{1}{j}|\xi_{j}|^{h_{N}} \Big)^{\frac{1}{h_{N}}} \, \geq \, \frac{1}{k^{1/h_{N}}}\,|\xi_{k}|
= e^{-\frac{\log{k}}{h_{N}}} \, |\xi_{k}| \geq e^{-\frac{\log{k}}{\log{N}}} \, |\xi_{k}| \geq e^{-1} \, |\xi_{k}|\,.
\]
It is well-known that, for every operator $S\colon E\to F$ between Banach spaces and $2 \leq p < \infty$ one has
\[
\pi_2(S) \leq  K_p \,C_2(F)\, \pi_p(S)\,,
\]
where $K_p \leq \sqrt{p}$  is the  best constant from  the right-hand side of Khinchine's inequality for
Rademacher $p$-averages (see \cite[Theorem 5.15]{Pisier}).

Consider the following obvious factorization of an operator $T \colon X \to F$:
\[
T\colon X \stackrel{I} \longrightarrow I(X) \stackrel{I^{-1}} \longrightarrow X  \stackrel{T} \longrightarrow F\,.
\]
Thus the above facts combined with the ideal properties of $p$-summing operators yields
\begin{align*}
\pi_2(T) & \leq \|I\|\,\pi_2(T \circ I^{-1}) \leq \|I\|\,\sqrt{h_N} \,C_2(F)\,\pi_{h_N} (T \circ I^{-1})\\
& \leq \|I\|\,\|I^{-1}\|\,\|T\|\,\sqrt{h_N}\,C_2(F)\,\pi_{h_N} (\text{id}_{I(X)})\\
& \leq  \|T\|\, C_2(F)\,\sqrt{h_N} \,\lambda \,\pi_{h_N} (\text{id}_{\ell_\infty^N})\,.
\end{align*}
Since $\|\text{id}\colon L_{h_N}(\mu_N) \to \ell_\infty^N  \| \leq e$, it follows that
\[
\pi_{h_N} (\text{id}_{\ell^N_\infty}) \leq e \pi_{h_N} (\text{id} \colon \ell^N_\infty \to L_{h_N}(\mu_N))
 = e h_N^{\frac{1}{h_N}} \leq e^2\,. \qedhere
\]
\end{proof}

\medskip

The following theorem is the main result of this section, and we want to mention once again that its proof
is very much inspired by \cite[Theorem~6]{Kisl}.

\medskip

\begin{Theo}
\label{Thm:KK}
Let $G$ be a compact abelian group and  $\Gamma$ a~finite subset in $\widehat{G}$. Assume that
$I\colon C_{\Gamma} \to  \ell_\infty^N$ is a~$\lambda$-embedding, and  $F:=(\mathbb{C}^{\Gamma}, \|\cdot\|)$
a~Banach space. Then, for every $\xi=(\xi_\gamma)_{\gamma \in \Gamma} \in \mathbb{C}^\Gamma$, we have
\[
\sup_{\|\mu \|_{\ell_2(\Gamma)}\leq 1} \|(\mu_\gamma \xi_\gamma )\|_F
\leq  e^2\, \lambda\, C_2(F) \,\sqrt{1+\log N}\,\,
\big\|M_\xi \colon C_\Gamma \to F\big\|\,.
\]
\end{Theo}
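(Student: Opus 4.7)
The plan is to view Theorem~\ref{Thm:KK} as the direct combination of the two ingredients that have just been established: Lemma~\ref{oneone}, which identifies the left-hand side of the claimed inequality as a $2$-summing norm, and Proposition~\ref{two}, which bounds $2$-summing norms of operators defined on subspaces of $\ell_\infty^N$ in terms of the operator norm.

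First, I would apply Lemma~\ref{oneone} to the finite subset $\Gamma \subset \widehat{G}$, the Banach space $F = (\mathbb{C}^\Gamma,\|\cdot\|)$, and the sequence $\xi = (\xi_\gamma)_{\gamma \in \Gamma}$. This gives the identity
\[
\sup_{\|\mu\|_{\ell_2(\Gamma)}\leq 1} \|(\mu_\gamma \xi_\gamma)\|_F
\,=\, \pi_2\bigl(M_\xi\colon C_\Gamma \to F\bigr),
\]
so the task reduces to estimating the $2$-summing norm of the multiplier $M_\xi$.

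Next, I would plug the operator $T := M_\xi\colon C_\Gamma \to F$ into Proposition~\ref{two}, with $X := C_\Gamma$ and with the given $\lambda$-embedding $I\colon C_\Gamma \hookrightarrow \ell_\infty^N$. The hypotheses of the proposition are met verbatim, and it yields
\[
\pi_2(M_\xi) \,\leq\, e^{2}\,\lambda\, C_2(F)\, \sqrt{1+\log N}\; \|M_\xi\colon C_\Gamma \to F\|.
\]
Chaining this with the identity from the previous step gives exactly the asserted inequality, with the stated constant.

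There is essentially no hidden obstacle: both Lemma~\ref{oneone} and Proposition~\ref{two} have been proved already, and the combination is mechanical. The only conceptual point worth underlining is that the whole strength of the theorem is transported into the statement through the $\lambda$-embedding hypothesis; verifying that a given subspace $C_\Gamma$ actually admits such an embedding into some $\ell_\infty^N$ (as Bernstein's inequality provided in \eqref{bernd} for the torus case) is where the real work will lie in the applications, not in the proof of Theorem~\ref{Thm:KK} itself.
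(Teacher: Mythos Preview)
Your proposal is correct and follows essentially the same approach as the paper. The paper's proof applies Proposition~\ref{two} to bound $\pi_2(M_\xi)$ and then invokes Lemma~\ref{one} together with Plancherel and a duality step, which amounts to re-deriving on the spot the identity already packaged in Lemma~\ref{oneone}; your use of Lemma~\ref{oneone} directly is the cleaner route to the same conclusion.
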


\medskip
Comparing with  Theorem~\ref{Thm:SK}, we see that the prize we  pay for the fact that the theorem  applies to
more general groups than the multidimensional  tori, is that the estimate involves the cotype constant of $F$.

\medskip

\begin{proof} By Lemma~\ref{two} and Bernstein's embedding from \eqref{bernd}, we get
\[
\pi_2(M_\xi \colon C_\Gamma \to F) \leq  e^2\, \lambda \, C_{2}(F) \, \sqrt{1+\log N} \, \|M_\xi\colon C_\Gamma \to F\|\,.
\]
Thus, it follows from Lemma~\ref{one} that, for all $f \in C_\Gamma$
\begin{align*}
\|(\widehat{f}(\gamma)\xi_{\gamma})\|_F & \leq  e^2\, \lambda \, C_{2}(F) \, \sqrt{1+\log N} \,
\|M_\xi \colon C_\Gamma \to F\| \Big(\int_G |f(g)|^2\,d\nu \Big)^{\frac{1}{2}} \\
& = e^2\, \lambda \, C_{2}(F) \, \sqrt{1+\log N} \,\|M_\xi \colon C_\Gamma \to F\|
\Big(\sum_{\gamma \in \Gamma} |\widehat{f}(\gamma)|^2\Big)^{\frac{1}{2}}\,.
\end{align*}
Then the conclusion follows by duality.
\end{proof}

\medskip

We finish with the following improvement of Corollary~\ref{Thm:K}; its proof is an immediate consequence of
Theorem~\ref{Thm:KK}, Bernstein's embedding from \eqref{bernd}, the fact that $C_2(\ell_p(\mathbb{Z}^{n}))
\leq \sqrt{2}$ for $1 \leq p \leq 2$ and that by H\"older's inequality, we have
\[
\Big(\sum_{\gamma \in \Gamma} |\xi_{\gamma}|^r\Big)^{1/r} =
\sup_{\|\mu \|_{\ell_2(\Gamma)}\leq 1} \|(\mu_\gamma \xi_\gamma )\|_{\ell_p(\Gamma)}, \quad\, (\xi_\gamma) \in \ell_r(\Gamma)
\]
for all $1 \leq p \leq 2$ with $\frac{1}{r} = \frac{1}{p}-\frac{1}{2}$\,.
\medskip

\begin{Coro} \label{Thm:SKS}
Let $1 \leq p \leq 2$ and $\frac{1}{r} = \frac{1}{p}-\frac{1}{2}$. Then, for each $n,m \in \mathbb{N}$ and for every
$\xi=(\xi_\alpha)_{\alpha \in \mathbb{Z}^n, |\alpha| \leq m}$
\begin{align*}
\Big(\sum_{\alpha \in \mathbb{Z}^n, |\alpha| \leq m} & |\xi_{\alpha}|^r\Big)^{1/r}
\\&
\leq 2\sqrt{2}e^2 \,\sqrt{n \log (1+20m)}\,\,\big\|M_\xi\colon \mathcal{T}_{\leq m}(\mathbb{T}^n)\to
\ell_p\big(\{\alpha \in \mathbb{Z}^n\colon |\alpha| \leq m\}\big)\big\|\,.
\end{align*}
\end{Coro}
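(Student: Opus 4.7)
The plan is to derive the corollary as a direct consequence of Theorem~\ref{Thm:KK} applied with $G = \mathbb{T}^n$, the finite character set $\Gamma = \{\alpha \in \mathbb{Z}^n \colon |\alpha| \leq m\} \subset \mathbb{Z}^n = \widehat{\mathbb{T}^n}$, and the target space $F = \ell_p(\Gamma)$. Under this choice $C_\Gamma$ coincides isometrically with $\mathcal{T}_{\leq m}(\mathbb{T}^n)$, so the operator $M_\xi \colon C_\Gamma \to F$ in Theorem~\ref{Thm:KK} is exactly the multiplier $M_\xi \colon \mathcal{T}_{\leq m}(\mathbb{T}^n) \to \ell_p(\Gamma)$ whose norm controls the right-hand side of the claimed estimate.

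The three remaining inputs are already at hand. First, Bernstein's embedding~\eqref{bernd} provides a $2$-embedding $I \colon \mathcal{T}_{\leq m}(\mathbb{T}^n) \to \ell_\infty^N$ with $N = (1+20m)^n$, so $\lambda = 2$ and $\log N = n \log(1+20m)$. Second, since $1 \leq p \leq 2$, the sequence space $\ell_p(\Gamma)$ is of cotype~$2$ with $C_2(\ell_p(\Gamma)) \leq \sqrt{2}$. Third, as recorded in the paragraph preceding the statement, H\"older's inequality (applied with the conjugate exponents $2/p$ and $r/p$, using $p/2 + p/r = 1$) together with the obvious extremizer $\mu_\gamma = |\xi_\gamma|^{r/2}/\|\xi\|_r^{r/2}$ yields the identity
\[
\sup_{\|\mu\|_{\ell_2(\Gamma)} \leq 1} \|(\mu_\gamma \xi_\gamma)\|_{\ell_p(\Gamma)} = \Big(\sum_{\gamma \in \Gamma} |\xi_\gamma|^r\Big)^{1/r}.
\]

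Plugging these three ingredients into Theorem~\ref{Thm:KK} produces the constant $e^2 \cdot 2 \cdot \sqrt{2} \cdot \sqrt{1+n\log(1+20m)}$ multiplying $\|M_\xi\|$. The only cosmetic point is that the statement advertises $\sqrt{n\log(1+20m)}$ rather than $\sqrt{1+n\log(1+20m)}$; for $m \geq 1$ this is absorbed into the numerical constant using $\log(1+20m) \geq \log 21 > 3$, and for $m=0$ the estimate is vacuous. No genuine obstacle is expected in carrying out the plan: every step is either a substitution into an already established theorem or a routine H\"older computation, which matches the paper's description of the corollary as an immediate consequence of Theorem~\ref{Thm:KK}.
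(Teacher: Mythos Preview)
Your proposal is correct and matches the paper's own argument essentially line for line: the paper also derives the corollary as an immediate consequence of Theorem~\ref{Thm:KK} together with Bernstein's $2$-embedding~\eqref{bernd} (giving $N=(1+20m)^n$), the cotype bound $C_2(\ell_p)\leq\sqrt{2}$ for $1\le p\le 2$, and the H\"older identity $\sup_{\|\mu\|_{\ell_2}\le 1}\|(\mu_\gamma\xi_\gamma)\|_{\ell_p}=\|\xi\|_{\ell_r}$. Your observation about absorbing the additive $1$ under the square root is fine; the paper simply does not comment on this cosmetic point.
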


\medskip

The following remarks suggest that this corollary leaves some space for improvements.
Given $1 \leq p < \infty$, consider again  the multiplication operator
\[
M_\xi \colon \mathcal{P}_{\leq m}(\mathbb{T}^n) \to \ell_p(\Lambda^{\leq}(m,n)), \quad\,
f \mapsto \big(\widehat{f}(\alpha)\xi_{\alpha}\big)\,.
\]
\noindent
If $2 \leq p < \infty$, then
\[
\sup_{\alpha \in \Lambda^{\leq}(m,n)} |\xi_{\alpha}| = \|M_\xi\|\,,
\]
whereas for  $\frac{2m}{m+1} \leq p \leq 2$ by the hypercontractive BH-inequality \eqref{BH}
\[
\sup_{\alpha \in \Lambda^{\leq}(m,n)} |\xi_{\alpha}| \leq \|M_\xi\|
\leq C^m \sup_{\alpha \in \Lambda^{\leq}(m,n)} |\xi_{\alpha}|\,.
\]
Hence, in view of Corollary~\ref{Thm:SKS} the complex interpolation between the extreme cases
$\ell_1(\Gamma)$ and $\ell_{\frac{2m}{m+1}}(\Gamma)$ suggests the following conjecture.

\medskip

\begin{Conj} \label{doof?}
Let $1 \leq p \leq \frac{2m}{m+1}$ and $\frac{1}{s} =  \frac{m}{m-1}\big(\frac{1}{p}-\frac{m+1}{2m}\big)$.
Then, there exists a~universal~constant $C>0$ such that for each $n,m \in \mathbb{N}$ and for every
$\xi=(\xi_\alpha)_{\alpha \in \Lambda^{\leq}(m,n)}$,

\[
\Big(\sum_{\alpha \in \Lambda^{\leq}(m,n)} |\xi_{\alpha}|^s\Big)^{1/s}
\leq C^{\frac{m}{s}} \,(n \log (1+20m))^{\frac{1}{s}}\,\,
\|M_\xi:\mathcal{P}_{\leq m}(\mathbb{T}^n) \to \ell_p(\Lambda^{\leq}(m,n))\|\,.
\]
\end{Conj}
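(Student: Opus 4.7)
The natural strategy is complex interpolation between the two endpoints already supplied by the theory. At $p=1$, $s=2$, Theorem~\ref{Thm:KK} applied to $G=\mathbb{T}^n$, $\Gamma=\Lambda^{\leq}(m,n)$ (so that $C_\Gamma=\mathcal{P}_{\leq m}(\mathbb{T}^n)$), $F=\ell_1(\Gamma)$ (for which $C_2(\ell_1)\leq\sqrt 2$), and Bernstein's embedding~\eqref{bernd} (with $\lambda=2$, $N=(1+20m)^n$) yields
\[
\|\xi\|_{\ell_2(\Lambda^{\leq}(m,n))}\;\leq\;2\sqrt 2\,e^2\sqrt{1+n\log(1+20m)}\;\big\|M_\xi\colon\mathcal{P}_{\leq m}(\mathbb{T}^n)\to\ell_1(\Lambda^{\leq}(m,n))\big\|,
\]
once one uses $\sup_{\|\mu\|_{\ell_2}\leq 1}\|\mu\cdot\xi\|_{\ell_1}=\|\xi\|_{\ell_2}$. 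At the other endpoint $p=\tfrac{2m}{m+1}$, $s=\infty$, testing against each character $\chi_\alpha\in\mathcal{P}_{\leq m}(\mathbb{T}^n)$ (sup norm $1$, with $\|M_\xi\chi_\alpha\|_{\ell_{2m/(m+1)}}=|\xi_\alpha|$) gives the trivial bound $\|\xi\|_{\ell_\infty}\leq\big\|M_\xi\colon\mathcal{P}_{\leq m}(\mathbb{T}^n)\to\ell_{2m/(m+1)}(\Lambda^{\leq}(m,n))\big\|$. Writing $\theta\in[0,1]$ by $\tfrac{1}{p}=(1-\theta)+\theta\cdot\tfrac{m+1}{2m}$, a direct check confirms $\tfrac{1-\theta}{2}=\tfrac{1}{s}$, so that Calder\'on's identification $[\ell_2,\ell_\infty]_\theta=\ell_s$ pairs the two endpoints with the conjectured $(p,s)$.

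The plan is then to apply the three lines lemma to an analytic function
\[
\Phi(z)\;=\;\sum_{\alpha\in\Lambda^{\leq}(m,n)}\xi_\alpha^{(z)}\,\eta_\alpha^{(z)},\qquad 0\leq\re z\leq 1,
\]
where $\eta\in\ell_{s'}$ with $\|\eta\|_{\ell_{s'}}=1$ is a dual test vector and the power-transformed families $\eta^{(z)},\xi^{(z)}$ are arranged so that $\eta^{(\theta)}=\eta$, $\xi^{(\theta)}=\xi$, $\|\eta^{(it)}\|_{\ell_2}=\|\eta^{(1+it)}\|_{\ell_1}=1$, and the boundary multiplier norms of $\xi^{(z)}$ satisfy
\[
\big\|M_{\xi^{(it)}}\colon\mathcal{P}_{\leq m}(\mathbb{T}^n)\to\ell_1\big\|\;\lesssim\;\big\|M_\xi\colon\mathcal{P}_{\leq m}(\mathbb{T}^n)\to\ell_p\big\|,
\]
\[
\big\|M_{\xi^{(1+it)}}\colon\mathcal{P}_{\leq m}(\mathbb{T}^n)\to\ell_{2m/(m+1)}\big\|\;\lesssim\;C^m\,\big\|M_\xi\colon\mathcal{P}_{\leq m}(\mathbb{T}^n)\to\ell_p\big\|.
\]
Combined with the two endpoint estimates, these boundary bounds give $|\Phi(it)|\lesssim\sqrt{n\log(1+20m)}\,\|M_\xi\|$ and $|\Phi(1+it)|\lesssim C^m\,\|M_\xi\|$, and the three lines lemma produces $|\Phi(\theta)|\lesssim\bigl(\sqrt{n\log(1+20m)}\bigr)^{1-\theta}(C^m)^\theta\|M_\xi\|$. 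Since $1-\theta=2/s$, taking the supremum over unit $\eta\in\ell_{s'}$ recovers the conjectured estimate, with $(n\log(1+20m))^{1/s}$ coming from the $p=1$ endpoint and $C^{m\theta}=C^{m(1-2/s)}$ absorbed into the $C^{m/s}$ factor.

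The main obstacle lies in constructing the analytic family $\xi^{(z)}$ with the claimed boundary bounds, since the multiplier norm $\|M_\xi\colon\mathcal{P}_{\leq m}(\mathbb{T}^n)\to\ell_q\|$ is not stable under pointwise power transformations $\xi_\alpha\mapsto|\xi_\alpha|^{c(z)}\sign(\xi_\alpha)$ of the symbol: being a supremum over all $f\in\mathcal{P}_{\leq m}(\mathbb{T}^n)$, it couples $\xi$ with $\widehat f$ through an $\ell_q$-sum that a pointwise transform cannot split. The only effortless direction is the log-convex bound $\|M_\xi\colon\to\ell_p\|\leq\|M_\xi\colon\to\ell_1\|^{1-\theta}\|M_\xi\colon\to\ell_{2m/(m+1)}\|^\theta$ from H\"older, which points the wrong way; effectively the conjecture is asking for the non-trivial reverse inclusion with a quantitative loss of $C^{m\theta}$ on the Calder\'on scale. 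Two plausible routes present themselves: (a) at each $z$ on $\re z=1$, pick a near-extremal polynomial $f_z\in\mathcal{P}_{\leq m}(\mathbb{T}^n)$ realizing $\|M_\xi\colon\to\ell_p\|$, and exploit the hypercontractive Bohnenblust--Hille inequality~\eqref{BH} on $f_z$ to drive the $\ell_{2m/(m+1)}$-side; or (b) work with a Calder\'on product construction $X_0^{1-\theta}X_1^\theta$ on the symbol side and compare it directly with the multiplier norm. Either way the BH exponential $C^{\sqrt{m\log m}}\leq C^m$ enters exactly once, at the boundary $\re z=1$, and becomes the conjectured $C^{m/s}$ after raising to $\theta=(s-2)/s$. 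Making either route rigorous is the true content of the conjecture.
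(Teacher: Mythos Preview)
The statement you are attempting is labeled a \emph{Conjecture} in the paper and is left open there; the authors give no proof, only the remark that complex interpolation between Corollary~\ref{Thm:SKS} and the Bohnenblust--Hille endpoint \emph{suggests} the inequality. So there is no ``paper's own proof'' to compare against.

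Your endpoint analysis and interpolation arithmetic are correct: the $p=1$, $s=2$ case is exactly Corollary~\ref{Thm:SKS} specialized to $\ell_1$, the $p=\tfrac{2m}{m+1}$, $s=\infty$ case is the trivial character test, and $(1-\theta)/2=1/s$ matches $[\ell_2,\ell_\infty]_\theta=\ell_s$. But your proposal is not a proof, and you say so yourself: the analytic family $\xi^{(z)}$ you would need must satisfy boundary estimates of the form $\|M_{\xi^{(it)}}\colon\mathcal{P}_{\leq m}(\mathbb{T}^n)\to\ell_1\|\lesssim\|M_\xi\colon\mathcal{P}_{\leq m}(\mathbb{T}^n)\to\ell_p\|$, and there is no mechanism for this. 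The multiplier norm is a supremum over all $f\in\mathcal{P}_{\leq m}(\mathbb{T}^n)$ of $\|\xi\cdot\widehat f\|_{\ell_q}/\|f\|_\infty$; a pointwise power transform of $\xi$ does nothing controllable to this quantity, and neither route (a) nor (b) supplies the missing step. Your closing sentence, that ``making either route rigorous is the true content of the conjecture,'' is an accurate assessment rather than a proof.

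For orientation, the closest thing the paper actually proves is Corollary~\ref{Thm:SKSK}, obtained via Theorem~\ref{Thm:IntMult}: there the interpolation is carried out not on $\|M_\xi\|$ but on $\pi_2(M_\xi)=\|D_\xi\colon\ell_2(\Gamma)\to F\|$ (Lemma~\ref{oneone}), which \emph{is} a single linear operator between a Calder\'on pair and therefore interpolates cleanly. The cost is that the factor $\sqrt{n}$ on the right never improves to $n^{1/s}$, and the left-hand exponent is $r$ (with $1/r=1/p-1/2$) rather than the conjectured $s$. The gap between that corollary and Conjecture~\ref{doof?} is precisely the gap your sketch cannot close.
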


\medskip

\medskip

\section{Variants  by interpolation}

The purpose of this section is to prove two variants  of Kislyakov's Theorem \ref{Thm:K}, both based on Theorem~\ref{Thm:KK}
and interpolation methods.  We at first recall some notation from interpolation theory (see, e.g., \cite{BL}). The pair
$\vec{X}=(X_0,X_1)$ of Banach spaces is called a~Banach couple if there exists a~Hausdorff topological vector space $\mathcal{X}$
such that $X_j\hookrightarrow\mathcal{X}$, $j=0, 1$. A~mapping $\mathcal{F}$, acting on the class of all Banach couples, is
called  an interpolation functor if for every couple $\xo = (X_0, X_1)$, $\mathcal{F}(\xo)$ is a~Banach space which is intermediate
with  respect to $\xo$, i.e., $X_0\cap X_1 \subset \mathcal{F}(\xo) \subset X_0 + X_1$), and $T\colon \mathcal{F}(\xo) \to \mathcal{F}(\yo)$
is bounded for every operator $T\colon \xo \to \yo$ (meaning $T\colon X_0 + X_1 \to Y_0 + Y_1$ is linear and its restrictions
$T\colon X_j \to Y_j$, $j=0,1$ are defined and bounded). If additionally there is a~constant $C>0$ such that for each
$T\colon \xo \to \yo$
\[
\|T\colon \mathcal{F}(\xo) \to \mathcal{F}(\yo)\| \leq C\,\|T\colon \xo \to \yo\|\,,
\]
where $\|T\colon \xo \to \yo\|:= \max\{\|T\colon X_0 \to Y_0\|, \, \|T\colon X_1 \to Y_1\|\}$, then $\mathcal{F}$ is called
bounded. Clearly, $C\geq 1$, and if $C=1$, then $\mathcal{F}$ is called exact.

For an exact interpolation functor $\mathcal{F}$ we define the fundamental function $\psi_{\mathcal{F}}$
of~$\mathcal{F}$ by
\[
\psi_{\mathcal{F}} (s, t) = \sup \|T\colon \mathcal{F}(\xo) \to \mathcal{F}(\yo)\|,  \quad\, s, t>0\,,
\]
where the supremum is taken over all Banach couples $\xo$, $\yo$ and all operators $T\colon \xo \to \yo$ such that
$\|T\colon X_0 \to Y_0\|\leq s$ and $\|T\colon X_1 \to Y_1\| \leq t$.

\medskip

\begin{Theo} \label{abstract}
Let $G$ be a compact abelian group and $\Gamma$ a~finite subset in $\widehat{G}$.  Suppose that $\mathcal{F}$
is an exact interpolation functor  with the fundamental function $\psi_{\mathcal{F}}$.
Given two norms $\|\cdot\|_1$ and $\|\cdot\|_2$ on $\mathbb{C}^{\Gamma}$ define
\[
F:= \mathcal{F}\big((\mathbb{C}^{\Gamma},\|\cdot\|_1), (\mathbb{C}^{\Gamma},\|\cdot\|_2)\big )\,.
\]
Then, for every $(\xi_\gamma) \in \mathbb{C}^{\Gamma}$, we have
\begin{align*}
\sup_{\|\mu\|_{\ell_2(\Gamma)}\leq 1} &\|(\mu_{\gamma} \xi_\gamma)_{\gamma \in \Gamma}\|_F \\
& \leq \psi_{\mathcal{F}}(1, 1)\,\psi_{\mathcal{F}} \big(\pi_2(M_\xi \colon C_\Gamma \to (\mathbb{C}^{\Gamma},\|\cdot\|_1))\,,\,
\pi_2(M_\xi \colon C_\Gamma \to (\mathbb{C}^{\Gamma},\|\cdot\|_2))\big)\,.
\end{align*}
\end{Theo}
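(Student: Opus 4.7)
The plan is to recast both sides of the inequality in terms of the diagonal operator $D_\xi$ via Lemma~\ref{oneone}, and then to exploit the Banach--couple structure of the situation together with the definition of the fundamental function $\psi_{\mathcal{F}}$.

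First, I would apply Lemma~\ref{oneone} three times: once to $F$ itself, and once to each of the Banach spaces $(\mathbb{C}^\Gamma,\|\cdot\|_i)$, $i=1,2$. This rewrites the left-hand side as $\|D_\xi\colon \ell_2(\Gamma)\to F\|$ and identifies each argument $\pi_2(M_\xi\colon C_\Gamma\to(\mathbb{C}^\Gamma,\|\cdot\|_i))$ on the right-hand side with $a_i:=\|D_\xi\colon \ell_2(\Gamma)\to(\mathbb{C}^\Gamma,\|\cdot\|_i)\|$. The claim thereby reduces to the purely diagonal-operator inequality
\[
\|D_\xi\colon \ell_2(\Gamma)\to F\|\leq \psi_{\mathcal{F}}(1,1)\,\psi_{\mathcal{F}}(a_1,a_2).
\]

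Next, I would view $D_\xi$ as a morphism between the Banach couples $\vec{X}=(\ell_2(\Gamma),\ell_2(\Gamma))$ and $\vec{Y}=((\mathbb{C}^\Gamma,\|\cdot\|_1),(\mathbb{C}^\Gamma,\|\cdot\|_2))$, whose coordinate-wise norms equal $a_1$ and $a_2$. By the very definition of the fundamental function of the exact interpolation functor $\mathcal{F}$, this yields
\[
\|D_\xi\colon \mathcal{F}(\vec{X})\to F\|\leq \psi_{\mathcal{F}}(a_1,a_2),
\]
since $\mathcal{F}(\vec{Y})=F$ by construction.

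The final and most delicate step is to estimate by $\psi_{\mathcal{F}}(1,1)$ the norm of the canonical inclusion $\iota\colon \ell_2(\Gamma)\hookrightarrow\mathcal{F}(\ell_2(\Gamma),\ell_2(\Gamma))$. The strategy is to test on rank-one maps: for every $x\in \ell_2(\Gamma)$ the operator $T_x\colon (\mathbb{C},\mathbb{C})\to(\ell_2(\Gamma),\ell_2(\Gamma))$, $\lambda\mapsto \lambda x$, has both coordinate-norms equal to $\|x\|_{\ell_2(\Gamma)}$, so applying $\mathcal{F}$ together with the homogeneity $\psi_{\mathcal{F}}(s,s)=s\,\psi_{\mathcal{F}}(1,1)$ should control $\|x\|_{\mathcal{F}(\ell_2,\ell_2)}$ by $\psi_{\mathcal{F}}(1,1)\,\|x\|_{\ell_2}$. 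Composing the resulting norm of $\iota$ with the bound of the previous paragraph delivers the stated inequality. The main obstacle is precisely this last step: one has to pin down the inclusion norm through $\psi_{\mathcal{F}}(1,1)$ alone, relying solely on the exactness of $\mathcal{F}$ and on its behaviour on the trivial couple $(\mathbb{C},\mathbb{C})$.
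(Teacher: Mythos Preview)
Your proposal is correct and follows essentially the same route as the paper: apply Lemma~\ref{oneone} to pass to diagonal operators, interpolate $D_\xi\colon(\ell_2(\Gamma),\ell_2(\Gamma))\to\bigl((\mathbb{C}^\Gamma,\|\cdot\|_1),(\mathbb{C}^\Gamma,\|\cdot\|_2)\bigr)$ via the fundamental function, and compose with the inclusion $\ell_2(\Gamma)\hookrightarrow\mathcal{F}(\ell_2(\Gamma),\ell_2(\Gamma))$. The step you flag as ``most delicate''---that $\|\mathrm{id}\colon Y\to\mathcal{F}(Y,Y)\|\leq\psi_{\mathcal{F}}(1,1)$ for any Banach space $Y$---is treated in the paper as an immediate ``in particular'' of the general bound $\|T\colon\mathcal{F}(\vec X)\to\mathcal{F}(\vec Y)\|\leq\psi_{\mathcal{F}}(\|T\|_{X_0\to Y_0},\|T\|_{X_1\to Y_1})$, so your rank-one argument is simply one way to unpack that standard fact and no genuine obstacle arises.
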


\medskip

 \noindent Note that Lemma~\ref{oneone} shows
\[
\pi_2(M_\xi \colon C_\Gamma \to (\mathbb{C}^{\Gamma},\|\cdot\|_i)) = \|D_\xi\colon \ell_2(\Gamma) \to (\mathbb{C}^{\Gamma},\|\cdot\|_i)\|\,,
\quad i=1, 2\,.
\]
Hence the proof of Theorem~\ref{abstract} is straightforward: It follows from the definition of the function
$\psi_{\mathcal{F}}$ that,  for any operator $T\colon \xo \to \yo$ between the Banach couples $\xo = (X_0, X_1)$ and $\yo=(Y_0, Y_1)$, we have
\[
\big\|T\colon \mathcal{F}(\xo) \to \mathcal{F}(\yo)\big\|
\leq \psi_{\mathcal{F}} \big(\|T\colon X_0 \to Y_0\|, \, \|T\colon X_1\to Y_1\|\big)\,.
\]
In particular this implies that, for any Banach space $Y$,
\[
\|{\rm{id}} \colon Y \to \mathcal{F}(Y, Y)\| \leq \psi_{\mathcal{F}}(1, 1)\,.
\]

\medskip

\subsection{Variant I}

\begin{Theo}
\label{Thm:IntMultB}
Let $G$ be a compact abelian group,  $\Gamma$ a~finite subset in $\widehat{G},$ and $I\colon C_\Gamma \to \ell_\infty^N$
a~ $\lambda$-embedding. Suppose that $\mathcal{F}$ is an exact interpolation functor  with the fundamental
function $\psi_{\mathcal{F}}$, and let
\[
X =\mathcal{F}(\ell_1(\Gamma), \ell_2(\Gamma))\,.
\]
Then, for every $\xi = (\xi_\gamma)_{\gamma \in \Gamma}$, we  have
\begin{align*}
\sup_{\|\mu\|_{\ell_2(\Gamma)}\leq 1} \|(\xi_\gamma \mu_\gamma)_{\gamma \in \Gamma}\|_X
\leq  K \,\,\psi_{\mathcal{F}}\big(\|M_\xi\colon C_\Gamma \to \ell_1(\Gamma)\|, \,\|\xi\|_{\ell_{\infty}(\Gamma)}\big)\,\,
\psi_{\mathcal{F}}\big(\sqrt{1 + \log N}, 1\big) ,
\end{align*}
where $K =  e^2 \lambda \sqrt{2} \,\,\psi_{\mathcal{F}}(1, 1)$.
\end{Theo}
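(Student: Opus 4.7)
The plan is to derive the theorem from Theorem~\ref{abstract} applied with $\|\cdot\|_1 = \|\cdot\|_{\ell_1(\Gamma)}$ and $\|\cdot\|_2 = \|\cdot\|_{\ell_2(\Gamma)}$, so that the interpolation space $F$ produced by $\mathcal{F}$ coincides with the space $X$ in the statement. This reduces matters to controlling the two endpoint $2$-summing norms $\pi_2(M_\xi\colon C_\Gamma \to \ell_j(\Gamma))$ for $j=1,2$ and then simplifying the outer $\psi_\mathcal{F}$-term that Theorem~\ref{abstract} produces.

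The endpoint estimates are direct. For $j=2$, Lemma~\ref{oneone} identifies
\[
\pi_2\bigl(M_\xi\colon C_\Gamma \to \ell_2(\Gamma)\bigr) = \|D_\xi\colon \ell_2(\Gamma) \to \ell_2(\Gamma)\| = \|\xi\|_{\ell_\infty(\Gamma)}.
\]
For $j=1$, I would feed the $\lambda$-embedding $I$ into Proposition~\ref{two} with target $\ell_1(\Gamma)$, using the cotype constant bound $C_2(\ell_1(\Gamma)) \leq \sqrt{2}$ coming from Khintchine's inequality, to obtain
\[
\pi_2\bigl(M_\xi\colon C_\Gamma \to \ell_1(\Gamma)\bigr) \leq e^2\, \lambda\, \sqrt{2}\, \sqrt{1+\log N}\, \|M_\xi\colon C_\Gamma \to \ell_1(\Gamma)\|.
\]

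The remaining step, and the one that requires the most care, is to insert these two estimates into the conclusion of Theorem~\ref{abstract} and to peel off the scalar factor $e^2 \lambda \sqrt{2} \sqrt{1+\log N}$ from the first coordinate of the outer $\psi_\mathcal{F}$. For this I invoke the standard properties of the fundamental function of an exact interpolation functor, namely positive homogeneity, monotonicity in each variable, and sub-multiplicativity of the form $\psi_\mathcal{F}(s_1 s_2, t_1 t_2) \leq \psi_\mathcal{F}(s_1, t_1)\, \psi_\mathcal{F}(s_2, t_2)$, which in turn follow by composing operators through $\mathcal{F}$ with suitable rescalings of norms on the underlying Banach couples. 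Applying these so as to factor out $e^2 \lambda \sqrt{2}$ by monotonicity and $\sqrt{1+\log N}$ by sub-multiplicativity, and absorbing the leading $\psi_\mathcal{F}(1,1)$ coming from Theorem~\ref{abstract}, yields the announced bound with constant $K = e^2 \lambda \sqrt{2}\, \psi_\mathcal{F}(1,1)$. The only genuine obstacle is to carry out these last manipulations in the correct order, as a suboptimal ordering would multiply the final constant by a spurious extra factor of $\psi_\mathcal{F}(1,1)$.
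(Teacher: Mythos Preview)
Your proposal is correct and follows essentially the same route as the paper: apply Theorem~\ref{abstract} with the endpoints $\ell_1(\Gamma)$ and $\ell_2(\Gamma)$, bound $\pi_2(M_\xi\colon C_\Gamma\to\ell_1(\Gamma))$ via Proposition~\ref{two} using $C_2(\ell_1)\le\sqrt{2}$, identify $\pi_2(M_\xi\colon C_\Gamma\to\ell_2(\Gamma))=\|\xi\|_\infty$, and then split the resulting $\psi_{\mathcal F}$ term by submultiplicativity. The only cosmetic difference is that you obtain the $\ell_2$ endpoint via Lemma~\ref{oneone}, whereas the paper verifies $\pi_2(M_\xi\colon C_\Gamma\to\ell_2(\Gamma))=\|\xi\|_\infty$ by a short direct computation; either route is fine. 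One small wording caveat: ``factoring out $e^2\lambda\sqrt{2}$ by monotonicity'' is not quite accurate on its own---you need monotonicity together with positive homogeneity (i.e., $\psi_{\mathcal F}(cs,t)\le\psi_{\mathcal F}(cs,ct)=c\,\psi_{\mathcal F}(s,t)$ for $c\ge 1$), or equivalently a second application of submultiplicativity combined with $\psi_{\mathcal F}(c,1)\le c$ from exactness.
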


\begin{proof}
Fix $\xi=(\xi_\gamma) \in \mathbb{C}^{\Gamma}$. Since $\ell_1(\Gamma)$ has cotype $2$ with $C_2(\ell_1(\Gamma)\leq \sqrt{2}$,
it follows from  Proposition~\ref{two} that
\[
\pi_2(M_{\xi}\colon C_\Gamma \to \ell_1(\Gamma))
\leq e^2 \sqrt{2} \lambda\,\sqrt{1+\ln N}\,\,\|M_{\xi}\colon C_\Gamma \to \ell_1(\Gamma)\|\,.
\]
Now observe that for any finite sequence $(f_i)_{i=1}^N$ in $C_\Gamma$, we have (where for a~given $g\in G$,
the Dirac functional $\delta_g\in B_{C(G)^{*}}$  is given by $\delta_g(f) :=f(g)$ for all $f\in C(G)$)
\begin{align*}
\sum_{i=1}^N \|M_{\xi}f_i\|_{\ell_2(\Gamma)}^2 & \leq \|\xi\|_\infty \int_{G} \sum_{i=1}^N |f_i(g)|^2\,d\nu \\
& \leq \|\xi\|_{\infty}\,\sup_{g \in G} \sum_{i=1}^N |\delta_g(f_i)|^2 \leq \|\xi\|_\infty\,
\sup_{\|x^{*}\|_{C(G)^{*}} \leq 1} \sum_{i=1}^N |x^{*}(f_i)|^2\,.
\end{align*}
This shows that $\pi_2(M_\xi\colon C_\Gamma \to \ell_2(\Gamma)) \leq \|\xi\|_\infty$. Since $\|\xi\|_\infty =
\|M_\xi \colon C_\Gamma \to \ell_2(\Gamma)\| \leq  \pi_2(M_\xi\colon C_\Gamma \to \ell_2(\Gamma))$, we get
the equality
\begin{align} \label{double}
\pi_2(M_\xi\colon C_\Gamma \to \ell_2(\Gamma)) = \|\xi\|_\infty\,.
\end{align}
Applying Proposition~\ref{abstract}, we conclude (by submultiplicativity of $\psi_{\mathcal{F}}$) that
\begin{align*}
\pi_2(M_\xi \colon C_\Gamma \to  X) & \leq \psi_{\mathcal{F}}(1, 1) \,\,
\psi_{\mathcal{F}}\big(\pi_2(M_\xi\colon  C_\Gamma \to \ell_1(\Gamma)), \pi_2(M_\xi\colon C_\Gamma \to \ell_2(\Gamma)\big) \\
& \leq K\,\, \psi_{\mathcal{F}}\big(\sqrt{1 + \log N}, 1\big) \,\,
\psi_{\mathcal{F}}\big(\|M_\xi\colon C_\Gamma \to \ell_1(\Gamma)\|, \|\xi\|_\infty\big)\,.
\end{align*}
This estimatecombined with Lemma \ref{one} yields the required statement.
\end{proof}

\medskip

In order to see a first consequence, we apply the preceding theorem  to $G=\mathbb{T}^n$ and $C_\Gamma = \mathcal{P}_{\leq m}(\mathbb{T}^n)$
with $\Gamma := \Lambda^{\leq}(m,n)$. For simplicity of notation, we for $1\leq p \leq \infty$ write below $\ell_p$ instead of
$\ell_p(\Lambda^{\leq}(m,n))$.

\medskip

\begin{Coro}
\label{Lambda_estimateA}
Let $\frac{1}{p_{\theta}} = (1-\theta) + \frac{\theta}{2}$ for $\theta\in (0, 1)$. Then for every
$\xi = (\xi_\alpha) \in \mathbb{C}^{\Lambda^{\leq}(m,n)}$, we have
\begin{align*}
\sup_{\|(\mu_\gamma)\|_{\ell_2} \leq 1}
\big\|(\mu_\gamma \xi_\gamma)\big\|_{\ell_{p_\theta}}
\leq
2 e^{2} \sqrt{2}\,\big(\sqrt{n\log(1 + 20 m)}\,
\|M_\xi\colon \mathcal{P}_{\leq m}(\mathbb{T}^n) \to \ell_1\|\big)^{1-\theta} \|\xi\|_{\infty}^{\theta}\,.
\end{align*}
In particular, we have
\begin{align*}
\sup_{\|(\mu_\gamma)\|_{\ell_2} \leq 1}& \big\|(\mu_\gamma \xi_\gamma)\big\|_{\ell_{\frac{2m}{m+1}}} \\
& \leq 2 e^{2} \sqrt{2}\, \big(\sqrt{n(\log(1 + 20m)}\,
\|M_\xi\colon \mathcal{P}_{\leq m}(\mathbb{T}^n) \to \ell_1\|\big)^{\frac{1}{m}} (\|\xi\|_{\infty})^{1- \frac{1}{m}}\,.
\end{align*}
\end{Coro}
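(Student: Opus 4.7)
The plan is to specialize Theorem~\ref{Thm:IntMultB} to the concrete setting $G=\mathbb{T}^n$ with $\Gamma = \Lambda^{\leq}(m,n)$ and $C_\Gamma = \mathcal{P}_{\leq m}(\mathbb{T}^n)$, taking $\mathcal{F}$ to be the complex interpolation functor $[\cdot,\cdot]_{\theta}$. First I would recall Bernstein's embedding from \eqref{bernd}, which tells us that $I\colon \mathcal{P}_{\leq m}(\mathbb{T}^n)\hookrightarrow \ell_\infty^N$ is a $2$-embedding with $N=(1+20m)^n$; thus the parameters in Theorem~\ref{Thm:IntMultB} are $\lambda = 2$ and $N=(1+20m)^n$, and in particular $\sqrt{1+\log N} = \sqrt{1 + n \log(1+20m)} \leq \sqrt{2}\sqrt{n\log(1+20m)}$.

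Next I would invoke the two standard interpolation facts. The first is Calderón's identity
\[
\big[\ell_1(\Gamma),\ell_2(\Gamma)\big]_{\theta} = \ell_{p_\theta}(\Gamma),\qquad \frac{1}{p_\theta} = (1-\theta)+\frac{\theta}{2},
\]
so the space $X$ in Theorem~\ref{Thm:IntMultB} is exactly $\ell_{p_\theta}$. The second is that the fundamental function of $[\cdot,\cdot]_\theta$ satisfies $\psi_\mathcal{F}(s,t)\leq s^{1-\theta}t^\theta$ (a direct consequence of the three-lines lemma), and in particular $\psi_\mathcal{F}(1,1)=1$. Plugging everything into Theorem~\ref{Thm:IntMultB} with these choices yields
\begin{align*}
\sup_{\|\mu\|_{\ell_2}\leq 1}\|(\mu_\gamma \xi_\gamma)\|_{\ell_{p_\theta}}
&\leq 2\sqrt{2}\,e^{2}\cdot \big\|M_\xi\colon\mathcal{P}_{\leq m}(\mathbb{T}^n)\to \ell_1\big\|^{1-\theta}\|\xi\|_\infty^{\theta}\cdot (1+\log N)^{(1-\theta)/2},
\end{align*}
and the bound on $\sqrt{1+\log N}$ absorbs the extra $(\sqrt{2})^{1-\theta}$ factor into the displayed constant.

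Finally, the ``in particular'' statement is obtained by specializing the interpolation parameter to $\theta = 1-\frac{1}{m}$: a direct computation gives
\[
\frac{1}{p_\theta} = \frac{1}{m} + \frac{1}{2}\Big(1-\frac{1}{m}\Big) = \frac{m+1}{2m},
\]
so that $p_\theta = \frac{2m}{m+1}$, and the exponents $1-\theta = \frac{1}{m}$, $\theta = 1-\frac{1}{m}$ are as claimed.

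I do not anticipate any real obstacle: the argument is purely a ``plug in and compute'' application of Theorem~\ref{Thm:IntMultB}, once one identifies the correct functor and recalls the interpolation formula for $\ell_p$-spaces. The only mildly delicate point is keeping track of the numerical constant coming from passing $\sqrt{1+\log N}$ to $\sqrt{n\log(1+20m)}$ and distributing the resulting $(\sqrt{2})^{1-\theta}\leq \sqrt{2}$ uniformly over $\theta\in(0,1)$.
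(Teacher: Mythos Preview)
Your proposal is correct and follows essentially the same route as the paper: apply Theorem~\ref{Thm:IntMultB} with $G=\mathbb{T}^n$, $\Gamma=\Lambda^{\leq}(m,n)$, the complex interpolation functor $[\cdot,\cdot]_\theta$ (whose fundamental function is $s^{1-\theta}t^\theta$), the identification $[\ell_1,\ell_2]_\theta=\ell_{p_\theta}$, and Bernstein's $2$-embedding into $\ell_\infty^{(1+20m)^n}$, then specialize to $\theta=1-\tfrac{1}{m}$. Your treatment of the passage from $\sqrt{1+\log N}$ to $\sqrt{n\log(1+20m)}$ is in fact more explicit than the paper's, which simply asserts that Theorem~\ref{Thm:IntMultB} ``gives the first conclusion''.
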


\begin{proof}
It is well known that the complex method $\mathcal{F_\theta} := [\,\cdot\,]_{\theta}$ of interpolation has the
fundamental function $\psi_{F}(s, t) = s^{1-\theta}t^{\theta}$ for all $s, t>0$. Applying the well known
interpolation formula for couples of $\ell_p$-spaces, we get
\[
\big[\ell_1, \ell_2\big]_{\theta} \cong \ell_{p_\theta}\,.
\]
Since by Bernstein's result from \eqref{bernd} there exists a~$2$-embedding of $\mathcal{P}_{\leq m}(\T^n)$ into
$\ell_{\infty}^N$ with $N = (1 + 20m)^n$, Theorem~\ref{Thm:IntMultB} gives the first conclusion. To get the second
assertion, we take $\theta = 1- \frac{1}{m}$.
\end{proof}

\medskip

For another seemingly interesting consequence, we recall definition of the abstract Lorentz space
$\Lambda_{\varphi}(\xo)$.

\medskip

For a given function $\varphi \in \mathcal{Q}$ and Banach couple $\xo=(X_0, X_1)$, the abstract Lorentz space
$\Lambda_{\varphi}(\xo)$ is defined to be the space all $x\in X_0 + X_1$ such that
\[
x = \sum_{n\in {\mathbb Z}}\, x_{n}, \quad \text{(convergence in $X_0 + X_1$)},
\]
where $x_{n}\in X_0\cap X_1$, and $\sum_{n\in {\mathbb Z}}\, \varphi(\|x_{n}\|_{X_0}, \|x_{n}\|_{X_1}) <\infty$.
The norm on $\Lambda_{\varphi}(\xo)$ is defined by
\begin{align*} \label{navalny}
\|x\|_{\Lambda_{\varphi}(\xo)} = \inf\, \sum_{n\in {\mathbb Z}}\,
\varphi (\|x_{n}\|_{X_0}, \|x_{n}\|_{X_1}),
\end{align*}
where the infimum is taken over all series described above. It is easily verified that $\Lambda_{\varphi}$ defines
an exact interpolation functor.

\medskip

\begin{Coro}
\label{Lambda_estimate}
Under the notation and assumption of Theorem~\ref{Thm:IntMultB}, we get for $\varphi:=\psi_{\mathcal{F}}$
\begin{align*}
& \big\|{\rm{id}\colon \Lambda_{\varphi}(\mathcal{M}(C_\Gamma, \ell_1(\Gamma)),\ell_\infty(\Gamma)) \to
\mathcal{D}(\ell_2(\Gamma), \mathcal{F}(\ell_1(\Gamma), \ell_2(\Gamma)})\big\|
\leq K \varphi(\sqrt{1 + \log N}, 1).
\end{align*}
\end{Coro}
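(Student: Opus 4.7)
\medskip

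\noindent\textbf{Proof proposal.} The plan is to decouple the estimate from Theorem~\ref{Thm:IntMultB} so that it can be assembled into a bound on the Lorentz-space identity. Observe that, as pointed out after Lemma~\ref{oneone}, the multiplier norm coincides with the corresponding diagonal norm once one identifies $\mathcal{M}(C_\Gamma,F)$ with $\mathcal{D}(\widehat{C_\Gamma},F)$, and the supremum $\sup_{\|\mu\|_{\ell_2(\Gamma)}\le 1}\|(\mu_\gamma\xi_\gamma)\|_X$ appearing in Theorem~\ref{Thm:IntMultB} is exactly $\|D_\xi:\ell_2(\Gamma)\to X\|$. Hence Theorem~\ref{Thm:IntMultB} can be rewritten as the pointwise inequality
\[
\|D_\xi:\ell_2(\Gamma)\to \mathcal{F}(\ell_1(\Gamma),\ell_2(\Gamma))\|
\leq K\,\varphi\big(\sqrt{1+\log N},1\big)\,\varphi\big(\|\xi\|_{\mathcal{M}(C_\Gamma,\ell_1(\Gamma))}, \|\xi\|_{\ell_\infty(\Gamma)}\big),
\]
valid for every $\xi\in\mathbb{C}^\Gamma$, with $K=e^2\lambda\sqrt{2}\,\varphi(1,1)$.

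Next I would unfold the definition of the abstract Lorentz space. Fix $\xi\in\Lambda_\varphi(\mathcal{M}(C_\Gamma,\ell_1(\Gamma)),\ell_\infty(\Gamma))$ and, for $\varepsilon>0$, choose a decomposition $\xi=\sum_{n\in\mathbb Z}\xi_n$ (convergent in $\mathcal{M}(C_\Gamma,\ell_1(\Gamma))+\ell_\infty(\Gamma)\hookrightarrow\mathbb C^\Gamma$) with
\[
\sum_{n\in\mathbb Z}\varphi\big(\|\xi_n\|_{\mathcal{M}(C_\Gamma,\ell_1(\Gamma))},\|\xi_n\|_{\ell_\infty(\Gamma)}\big)
\leq (1+\varepsilon)\,\|\xi\|_{\Lambda_\varphi}.
\]
Since multiplication is linear in the multiplier sequence, $D_\xi=\sum_n D_{\xi_n}$ as operators $\ell_2(\Gamma)\to\mathbb C^\Gamma$, and because $\mathcal{F}(\ell_1(\Gamma),\ell_2(\Gamma))$ is a Banach space continuously embedded in $\mathbb C^\Gamma$, the triangle inequality for diagonal operators yields
\[
\|D_\xi:\ell_2(\Gamma)\to\mathcal{F}(\ell_1(\Gamma),\ell_2(\Gamma))\|
\leq \sum_{n\in\mathbb Z}\|D_{\xi_n}:\ell_2(\Gamma)\to\mathcal{F}(\ell_1(\Gamma),\ell_2(\Gamma))\|.
\]
Now I would substitute the rewritten Theorem~\ref{Thm:IntMultB} into each summand.

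Combining these two inequalities gives
\[
\|D_\xi\|\leq K\,\varphi\big(\sqrt{1+\log N},1\big)\sum_{n\in\mathbb Z}\varphi\big(\|\xi_n\|_{\mathcal{M}(C_\Gamma,\ell_1(\Gamma))},\|\xi_n\|_{\ell_\infty(\Gamma)}\big)
\leq K\,\varphi\big(\sqrt{1+\log N},1\big)(1+\varepsilon)\|\xi\|_{\Lambda_\varphi}.
\]
Letting $\varepsilon\downarrow 0$ and taking the supremum over $\xi$ in the unit ball of $\Lambda_\varphi(\mathcal{M}(C_\Gamma,\ell_1(\Gamma)),\ell_\infty(\Gamma))$ produces precisely the claimed norm bound on the identity map into $\mathcal{D}(\ell_2(\Gamma),\mathcal{F}(\ell_1(\Gamma),\ell_2(\Gamma)))$. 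The only non-routine point is the bookkeeping of spaces: one must verify that the two sides of the identity are genuinely well-defined as continuously embedded subspaces of $\mathbb C^\Gamma$ so that the series decomposition of $\xi$ really produces a compatible series decomposition of $D_\xi$ in the target diagonal operator space; since $\Gamma$ is finite throughout this section, this reduces to a formality.
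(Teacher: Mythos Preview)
Your proposal is correct and follows exactly the same route as the paper: rewrite Theorem~\ref{Thm:IntMultB} as the pointwise bound $\|D_\xi:\ell_2(\Gamma)\to\mathcal F(\ell_1(\Gamma),\ell_2(\Gamma))\|\le K\,\varphi(\sqrt{1+\log N},1)\,\varphi(\|\xi\|_{\mathcal M(C_\Gamma,\ell_1(\Gamma))},\|\xi\|_{\ell_\infty(\Gamma)})$ and then invoke the definition of $\Lambda_\varphi$. The paper simply says ``by the construction of the abstract Lorentz space $\Lambda_\varphi$ this completes the proof'', whereas you spell out the decomposition $\xi=\sum_n\xi_n$, the triangle inequality, and the passage $\varepsilon\downarrow 0$; this extra detail is welcome but not a different argument.
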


\begin{proof}
From Theorem \ref{Thm:IntMultB}, for any $\xi=(\xi_\gamma) \in \mathbb{C}^{\Gamma}$, we get that
\begin{align*}
 \|D_{\xi}\colon \ell_2(\Gamma) &\to \mathcal{F}(\ell_1(\Gamma), \ell_2(\Gamma))\|
 \\
& \leq K \,\,\varphi(\sqrt{1 + \log N}, 1)\,\,
\varphi(\|\xi\|_{\mathcal{M}(C_\Gamma, \ell_1(\Gamma))}, \|\xi\|_{\ell_{\infty}(\Gamma)})\,.
\end{align*}
By the construction of the abstract Lorentz space $\Lambda_{\varphi}$
this completes the proof.
\end{proof}

\medskip

\subsection{Variant II}

\begin{Theo}
\label{Thm:IntMult}
Let  $\mathcal{F}$ be an  exact interpolation functor with a~fundamental function $\psi_{\mathcal{F}}$, and for
$m,n \in \mathbb{N}$
\[
X = \mathcal{F}\big(\ell_1(\Lambda^{\leq}(m,n)), \ell_{\frac{2m}{m+1}}(\Lambda^{\leq}(m,n))\big)\,.
\]
Then for every $\xi = (\xi_\alpha)_{\alpha \in\Lambda(m,n)}$, we have
\begin{align*}
\sup_{\|\mu\|_{\ell_2(\Lambda^{\leq}(m,n))}\leq 1} \|(\xi_\alpha \mu_\alpha)_{\alpha \in \Lambda^{\leq}(m, n)}\|_{X}
\leq \,C(m)\,\sqrt{n}\, \|M_{\xi}\colon \mathcal{P}_{\leq m}(\mathbb{T}^n) \to \ell_1(\Lambda^{\leq}(m,n))\|\,,
\end{align*}
where $C(m) = e^2 2 \,\,\sqrt{2}\,\, \psi_{\mathcal{F}}(1, 1 )\, \,\psi_{\mathcal{F}}(\sqrt{\log(1 + 20 m)},1)$.
\end{Theo}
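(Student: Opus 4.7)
The plan is to apply the abstract interpolation estimate of Theorem~\ref{abstract} with the two norms $\|\cdot\|_1=\|\cdot\|_{\ell_1(\Lambda^{\leq}(m,n))}$ and $\|\cdot\|_2=\|\cdot\|_{\ell_{2m/(m+1)}(\Lambda^{\leq}(m,n))}$ on $\mathbb{C}^{\Lambda^{\leq}(m,n)}$ and the given exact interpolation functor $\mathcal{F}$, so that the interpolated space is precisely $X$. Since Lemma~\ref{oneone} identifies the left-hand side of the theorem with $\pi_2(M_\xi\colon \mathcal{P}_{\leq m}(\mathbb{T}^n)\to X)$, this reduces the problem to controlling the two endpoint $2$-summing norms
\[
\pi_2\big(M_\xi\colon \mathcal{P}_{\leq m}(\mathbb{T}^n) \to \ell_1\big) \quad\text{and}\quad \pi_2\big(M_\xi\colon \mathcal{P}_{\leq m}(\mathbb{T}^n) \to \ell_{2m/(m+1)}\big),
\]
and then combining them through the fundamental function $\psi_{\mathcal{F}}$.

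For the first endpoint, Proposition~\ref{two} applied with Bernstein's $2$-embedding~\eqref{bernd} $\mathcal{P}_{\leq m}(\mathbb{T}^n)\hookrightarrow\ell_{\infty}^{N}$ (with $N=(1+20m)^n$) and the cotype-$2$ estimate $C_2(\ell_1)\leq \sqrt{2}$ gives
\[
\pi_2\big(M_\xi\colon \mathcal{P}_{\leq m}(\mathbb{T}^n) \to \ell_1\big) \,\leq\, 2\sqrt{2}\,e^{2}\,\sqrt{n\log(1+20m)}\,\|M_\xi\colon \mathcal{P}_{\leq m}(\mathbb{T}^n) \to \ell_1\|.
\]
For the second endpoint, Lemma~\ref{oneone} combined with the Hölder identity $\|D_\xi\colon \ell_2\to\ell_{2m/(m+1)}\| = \|\xi\|_{\ell_{2m}}$ yields
\[
\pi_2\big(M_\xi\colon \mathcal{P}_{\leq m}(\mathbb{T}^n) \to \ell_{2m/(m+1)}\big) = \|\xi\|_{\ell_{2m}(\Lambda^{\leq}(m,n))}.
\]
Bounding $\|\xi\|_{\ell_{2m}} \leq |\Lambda^{\leq}(m,n)|^{1/(2m)}\|\xi\|_{\infty}$ by Hölder, and using the elementary binomial estimate $\binom{n+m}{m}\leq (e(n+m)/m)^m \leq (2en)^m$ (valid for $n,m\geq 1$, since $1/m+1/n\leq 2$) together with $\|\xi\|_\infty\leq \|M_\xi\colon \mathcal{P}_{\leq m}(\mathbb{T}^n)\to\ell_1\|$ (obtained by testing against the monomials $z^{\alpha}$), one gets
\[
\pi_2\big(M_\xi\colon \mathcal{P}_{\leq m}(\mathbb{T}^n) \to \ell_{2m/(m+1)}\big) \,\leq\, \sqrt{2e}\,\sqrt{n}\,\|M_\xi\colon \mathcal{P}_{\leq m}(\mathbb{T}^n)\to \ell_1\|,
\]
crucially \emph{without} any logarithmic factor in $m$.

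To assemble the result, I would invoke the monotonicity and positive homogeneity of $\psi_{\mathcal{F}}$. Since $\sqrt{2e}\leq 2\sqrt{2}\,e^{2}$, both endpoint bounds are majorised, respectively, by $2\sqrt{2}\,e^{2}\sqrt{n\log(1+20m)}\|M_\xi\colon\to\ell_1\|$ and $2\sqrt{2}\,e^{2}\sqrt{n}\|M_\xi\colon\to\ell_1\|$, so pulling the common factor $2\sqrt{2}\,e^{2}\sqrt{n}\|M_\xi\colon \to \ell_1\|$ out of $\psi_{\mathcal{F}}$ by homogeneity leaves the residual $\psi_{\mathcal{F}}(\sqrt{\log(1+20m)},1)$; multiplying by the $\psi_{\mathcal{F}}(1,1)$-factor from Theorem~\ref{abstract} produces the claimed bound. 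The only nontrivial step is the uniform-in-$(m,n)$ binomial estimate; the rest is a routine assembly of Lemma~\ref{oneone}, Proposition~\ref{two}, and Theorem~\ref{abstract}.
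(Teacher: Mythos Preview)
Your proof is correct and follows essentially the same route as the paper: both apply Theorem~\ref{abstract} to the two endpoints $\ell_1$ and $\ell_{2m/(m+1)}$, control the $\ell_1$-endpoint $2$-summing norm via Proposition~\ref{two} together with Bernstein's embedding~\eqref{bernd}, control the $\ell_{2m/(m+1)}$-endpoint via Lemma~\ref{oneone} and the binomial bound on $|\Lambda^{\leq}(m,n)|^{1/(2m)}$, and then assemble using homogeneity and monotonicity of $\psi_{\mathcal F}$. The only cosmetic difference is that the paper isolates the second-endpoint computation as Lemma~\ref{surprise} (obtaining $|\Lambda^{\leq}(m,n)|^{1/(2m)}\le 2\sqrt{2e}\,\sqrt{1+(n-1)/m}$), whereas you prove it inline with the variant estimate $\binom{n+m}{m}\le(2en)^m$; both lead to the same constant $C(m)$.
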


\medskip
In order to prove  this result (see the end of this subsection),  we apply Theorem~\ref{abstract}
in combination with Proposition~\ref{two}
and the following lemma.

\medskip

\begin{Lemm}
\label{surprise}
For each $m, n\in \mathbb{N}$ and for every $\xi \in \mathbb{C}^{\Lambda^{\leq}(m, n)}$,
\[
\pi_2\big(M_\xi \colon \mathcal{P}_{\leq m}(\mathbb{T}^n) \to \ell_{\frac{2m}{m+1}}(\Lambda^{\leq}(m, n)\big)
\leq |\Lambda^{\leq}(m, n)|^{\frac{1}{2m}}\,\|\xi\|_\infty\,.
\]
In particular, if $\xi= \textbf{1}$,
\begin{align*}
\sqrt{1 + \frac{n-1}{m}}  \leq
\pi_2\big(M_\xi \colon \mathcal{P}_{\leq m}(\mathbb{T}^n) &\to \ell_{\frac{2m}{m+1}}(\Lambda^{\leq}(m, n)\big)\\
& = |\Lambda^{\leq}(m, n)|^{\frac{1}{2m}} \leq 2\sqrt{2e}\, \sqrt{1 + \frac{n-1}{m}}\,.
\end{align*}
\end{Lemm}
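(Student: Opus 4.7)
The plan is to reduce $\pi_2(M_\xi)$ to the norm of a concrete diagonal operator between $\ell_p$-spaces, and then to apply two-sided combinatorial estimates for $|\Lambda^{\leq}(m,n)| = \binom{n+m}{m}$.

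First I would observe that $\mathcal{P}_{\leq m}(\mathbb{T}^n)$ with the sup norm coincides with $C_{\Lambda^{\leq}(m,n)}$ for $G=\mathbb{T}^n$, so that Lemma~\ref{oneone} applied to $\Gamma=\Lambda^{\leq}(m,n)$ and $F=\ell_{\frac{2m}{m+1}}(\Gamma)$ gives
\[
\pi_2\bigl(M_\xi\colon \mathcal{P}_{\leq m}(\mathbb{T}^n) \to \ell_{\frac{2m}{m+1}}(\Gamma)\bigr) = \|D_\xi\colon \ell_2(\Gamma) \to \ell_{\frac{2m}{m+1}}(\Gamma)\|.
\]

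Second, a straightforward Hölder duality argument shows that for any $1\leq p\leq 2$ with $1/r=1/p-1/2$ one has $\|D_\xi\colon \ell_2(\Gamma)\to\ell_p(\Gamma)\|=\|\xi\|_{\ell_r(\Gamma)}$. Plugging in $p=2m/(m+1)$ forces $r=2m$, and hence
\[
\pi_2(M_\xi) = \|\xi\|_{\ell_{2m}(\Gamma)} \leq |\Gamma|^{1/(2m)}\,\|\xi\|_\infty,
\]
which already proves the first assertion. For $\xi=\mathbf{1}$ this last estimate is an equality, yielding $\pi_2(M_\mathbf{1})=|\Lambda^{\leq}(m,n)|^{1/(2m)}$.

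Third, only the two-sided estimate on $\binom{n+m}{m}^{1/(2m)}$ remains, and it is purely combinatorial. Writing $\binom{n+m}{m}=\prod_{j=1}^m(n+j)/j$ and noting that $(n+j)/j\geq (n+m)/m$ for $1\leq j\leq m$ produces $\binom{n+m}{m}\geq\bigl((n+m)/m\bigr)^m\geq\bigl(1+(n-1)/m\bigr)^m$, which is the lower bound after taking a $2m$-th root. For the upper bound I would use the classical estimate $\binom{N}{k}\leq(eN/k)^k$ with $N=n+m$, $k=m$, together with the elementary inequality $1+n/m\leq 2\bigl(1+(n-1)/m\bigr)$ (valid since $m\geq 1$) to conclude
\[
\binom{n+m}{m}^{1/(2m)}\leq \sqrt{e(n+m)/m}\leq \sqrt{2e}\,\sqrt{1+(n-1)/m}\leq 2\sqrt{2e}\,\sqrt{1+(n-1)/m}.
\]

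There is no genuine obstacle; the whole argument is elementary once Lemma~\ref{oneone} supplies the identification of $\pi_2(M_\xi)$ with a diagonal operator norm. The only mildly subtle point is bookkeeping the Hölder exponents in the second step so that $r=2m$ emerges and lines up exactly with the binomial exponent $1/(2m)$ in the third step — which is precisely what makes the lemma sharp at $\xi=\mathbf{1}$.
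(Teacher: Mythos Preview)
Your proof is correct and follows essentially the same route as the paper: both invoke Lemma~\ref{oneone} to identify $\pi_2(M_\xi)$ with the diagonal operator norm $\|D_\xi\colon \ell_2(\Gamma)\to\ell_{2m/(m+1)}(\Gamma)\|$, and then reduce to elementary estimates on $|\Lambda^{\leq}(m,n)|$. The only cosmetic differences are that you use the closed form $|\Lambda^{\leq}(m,n)|=\binom{n+m}{m}$ and the product representation $\prod_{j=1}^m(n+j)/j$ together with $\binom{N}{k}\le(eN/k)^k$, whereas the paper writes $|\Lambda^{\leq}(m,n)|$ as $\sum_{k=0}^m\binom{k+n-1}{k}$ and appeals to Stirling for the upper bound---your bookkeeping is in fact a little cleaner.
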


\begin{proof} We apply Lemma \ref{oneone} in the case $G =\mathbb{T}^n$ with
$\Gamma =\Lambda^{\leq}(m,n) \subset \widehat{G} = \mathbb{Z}^n$ and $F= \ell_{\frac{2m}{m+1}}(\Lambda^{\leq}(m,n))$.
Clearly, for each $m, n\in \mathbb{N}$, we have
\begin{align*}
&
\big\|{\rm{id}} \colon \ell_2(\Lambda^{\leq}(m, n)) \to \ell_{\frac{2m}{m+1}}(\Lambda^{\leq}(m, n))\big\|
\\&
=|\Lambda^{\leq}(m, n)|^{\frac{1}{2m}} =
 \bigg(\sum_{k=0}^m
{k+n-1 \choose k}\bigg)^{\frac{1}{2m}}
\leq
(m+1)^{\frac{1}{2m}} {m+n-1 \choose m}^{\frac{1}{2m}}
\,.
\end{align*}
Applying the well-known quantitative version of Stirling's formula yields
\[
{m+n-1 \choose m} \leq 2 e^m \Big(1 + \frac{n-1}{m}\Big)^m\,.
\]
Now observe that, for any $x\geq y>z >0$, $\frac{x-z}{y-z} \geq \frac{x}{y}$ and whence
\[
{N \choose k} = \frac{N}{k}\,\frac{N-1}{k-1} \cdots \frac{N-k+1}{1}
\geq \Big(\frac{N}{k}\Big)^k, \quad\, 1\leq k\leq N\,.
\]
In consequence, we deduce that
\[
\sqrt{1 + \frac{n-1}{m}} \leq |\Lambda^{\leq}(m, n)|^{\frac{1}{2m}}
\leq 2 \sqrt{2e}\, \sqrt{1 + \frac{n-1}{m}}\,,
\]
and this gives the required estimates.
\end{proof}

\medskip
Finally, we are prepared to give the proof of Theorem~\ref{Thm:IntMult}.

\medskip

\begin{proof}[Proof of Theorem~\ref{Thm:IntMult}]
Note that
\[
\|\xi\|_\infty \leq \beta := \|M_{\xi}\colon \mathcal{P}_{\leq m}(\mathbb{T}^n  \to \ell_1(\Lambda^{\leq}(m,n))\|\,.
\]
Since $\ell_1(\Lambda^{\leq}(m,n)$ has cotype $2$ with $C_2(\ell_1(\Lambda^{\leq}(m,n))\leq \sqrt{2}$,
it follows from Proposition~\ref{two} and the embedding from \eqref{bernd} that
\begin{align*}
\pi_2(M_{\xi}\colon \mathcal{P}_{\leq m}(\mathbb{T}^n) \to \ell_1(\Lambda^{\leq}(m,n)))
\leq e^2 2 \,\sqrt{2}\,
\sqrt{n\log(1 + 20 m)}\, \,\,\beta
\,.
\end{align*}
From Lemma~\ref{surprise} we know that
\[
\pi_2(M_\xi\colon  \mathcal{P}_{\leq m}(\mathbb{T}^n) \to \ell_{\frac{2m}{m+1}}(\Lambda^{\leq}(m, n)) \leq 2\sqrt{2e} \sqrt{n} \,\beta\,.
\]
Applying Theorem~\ref{abstract}, we conclude (by submulitiplicativity of $\psi_{\mathcal{F}}$)
that with $C=\psi_{\mathcal{F}}(1, 1)$ that
\begin{align*}
& \sup_{\|\mu\|_{\ell_2(\Lambda^{\leq}(m,n))}\leq 1} \|(\mu_{\gamma} \xi_\gamma)_{\gamma \in \Gamma}\|_F \\ & \leq C
\psi_{\mathcal{F}}(\pi_2(M_\xi\colon  \mathcal{P}_{\leq m}(\mathbb{T}^n) \to \ell_1(\Lambda^{\leq}(m,n))), \pi_2(M_\xi\colon \mathcal{P}_{\leq m}(\mathbb{T}^n)\to \ell_2(\Lambda^{\leq}(m,n))) \\
& \leq C \psi_{\mathcal{F}}(e^2 2 \,\sqrt{2}\, \sqrt{n\log(1 + 20 m)}\, \beta, 2 \sqrt{2e} \sqrt{n} \,\beta )\\
& \leq C e^2 2 \,\sqrt{2} \, \sqrt{n}\,\psi_{\mathcal{F}}(\sqrt{\log(1 + 20 m)},1)
\,\beta \,. \,\,\,\,\,\,\qedhere
\end{align*}
\end{proof}

\medskip
The following corollary 'interpolates' the estimates  from Corollary~\ref{Thm:SK} ($p=1$) and Lemma~\ref{surprise} ($p=2m/m+1$)\,.
\medskip

\begin{Coro} \label{Thm:SKSK}
For  $m \in \mathbb{N}$ let $1 \leq p \leq \frac{2m}{m+1}$. Then, for each $n\in \mathbb{N}$ and every
$\xi=(\xi_\alpha)_{\alpha \in \Lambda^{\leq}(m,n)}$,
\[
\Big(\sum_{\alpha \in \Lambda^{\leq}(m,n)} |\xi_{\alpha}|^r\Big)^{1/r} \leq 2\sqrt{2}e^2 \,
\sqrt{n}  \sqrt{\log (1+20m)}^{\beta_m}\,\,\|M_\xi\colon \mathcal{P}_{\leq m}(\mathbb{T}^n)\to \ell_p(\Lambda^{\leq }(m,n))\|\,,
\]
where $\frac{1}{r} = \frac{1}{p} -\frac{1}{2}$ and
$
\beta_m = 1 - \frac{1-1/p}{1-\frac{m+1}{2m}}\,.
$
\end{Coro}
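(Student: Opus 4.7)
The plan is to apply Theorem~\ref{Thm:IntMult} specialized to the complex method of interpolation $\mathcal{F}=[\,\cdot\,]_\theta$, for the unique $\theta\in[0,1]$ such that the interpolated target coincides with $\ell_p(\Lambda^{\leq}(m,n))$.

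First, I would use the classical interpolation identity
\[
\bigl[\ell_1(\Lambda^{\leq}(m,n)),\,\ell_{2m/(m+1)}(\Lambda^{\leq}(m,n))\bigr]_\theta\cong\ell_p(\Lambda^{\leq}(m,n)),
\]
where $\frac{1}{p}=(1-\theta)+\theta\cdot\frac{m+1}{2m}$, to solve $\theta=\frac{2m(1-1/p)}{m-1}$ and verify that $1-\theta=\beta_m$. Since the fundamental function of the complex method is $\psi_{[\cdot]_\theta}(s,t)=s^{1-\theta}t^\theta$, this yields $\psi_{[\cdot]_\theta}(1,1)=1$ and $\psi_{[\cdot]_\theta}(\sqrt{\log(1+20m)},1)=\sqrt{\log(1+20m)}^{\,\beta_m}$. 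Next I would rewrite the LHS of the inequality, via Hölder's inequality with $\frac{1}{r}=\frac{1}{p}-\frac{1}{2}$, as the supremum appearing in Theorem~\ref{Thm:IntMult}:
\[
\Big(\sum_{\alpha\in\Lambda^{\leq}(m,n)}|\xi_\alpha|^r\Big)^{1/r}=\sup_{\|\mu\|_{\ell_2(\Lambda^{\leq}(m,n))}\leq 1}\|(\mu_\alpha\xi_\alpha)\|_{\ell_p(\Lambda^{\leq}(m,n))}.
\]
Plugging into Theorem~\ref{Thm:IntMult}, the constant $C(m)=e^2\cdot 2\sqrt{2}\cdot\psi_{\mathcal{F}}(1,1)\cdot\psi_{\mathcal{F}}(\sqrt{\log(1+20m)},1)$ reduces to $2\sqrt{2}e^2\sqrt{\log(1+20m)}^{\,\beta_m}$; combined with the $\sqrt{n}$ factor from the theorem, this produces the stated estimate.

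The main obstacle will be reconciling the precise form of the multiplier norm on the RHS. Theorem~\ref{Thm:IntMult} as stated produces a bound involving $\|M_\xi\colon\mathcal{P}_{\leq m}(\mathbb{T}^n)\to\ell_1\|$, whereas the statement under consideration asks for $\|M_\xi\colon\mathcal{P}_{\leq m}(\mathbb{T}^n)\to\ell_p\|$. At the two extremes this is immediate: at $p=1$ the two multiplier norms coincide, while at $p=2m/(m+1)$ Lemma~\ref{surprise} directly gives the bound via the trivial estimate $\|\xi\|_\infty\leq\|M_\xi\colon\mathcal{P}_{\leq m}(\mathbb{T}^n)\to\ell_p\|$. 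For intermediate $p$ the right remedy is to revisit the proof of Theorem~\ref{Thm:IntMult} and apply Proposition~\ref{two} with target $\ell_p$ rather than $\ell_1$ at the first step (which is legitimate since $C_2(\ell_p)\leq\sqrt{2}$ for $1\leq p\leq 2$), while in the second step use $\|\xi\|_\infty\leq\|M_\xi\colon\mathcal{P}_{\leq m}(\mathbb{T}^n)\to\ell_p\|$; the subsequent application of Theorem~\ref{abstract} then produces the bound with the desired $\ell_p$-multiplier norm, and the constant simplification carried out above gives exactly the claimed estimate.
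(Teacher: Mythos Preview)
Your main line---specialize Theorem~\ref{Thm:IntMult} to the complex method with $\theta_m$ chosen so that $[\ell_1,\ell_{2m/(m+1)}]_{\theta_m}=\ell_p$, then rewrite the left-hand side via the H\"older duality $\|D_\xi\colon\ell_2\to\ell_p\|=\|\xi\|_r$---is exactly what the paper does. You are also right that this yields the multiplier norm $\|M_\xi\colon\mathcal{P}_{\leq m}(\mathbb{T}^n)\to\ell_1\|$ on the right-hand side, not $\ell_p$; in fact the paper's own proof stops at $\ell_1$ (look at the displayed inequality in its proof), so the $\ell_p$ in the statement is not established there either and is most plausibly a slip.

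However, your proposed remedy for closing this gap does not work. If in the first step you apply Proposition~\ref{two} with target $\ell_p$, what you obtain is a bound on $\pi_2(M_\xi\colon C_\Gamma\to\ell_p)$, not on $\pi_2(M_\xi\colon C_\Gamma\to\ell_1)$. To then land on $\ell_p$ via Theorem~\ref{abstract} you would have to interpolate the couple $(\ell_p,\ell_{2m/(m+1)})$, and the only parameter yielding $\ell_p$ is $\theta=0$. The fundamental function then gives $\psi_{[\cdot]_0}(\sqrt{\log(1+20m)},1)=\sqrt{\log(1+20m)}$, i.e.\ exponent $1$, not $\beta_m$; you simply recover Corollary~\ref{Thm:SKS}. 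Conversely, keeping the interpolation couple $(\ell_1,\ell_{2m/(m+1)})$ so that the exponent is $\beta_m$ forces you to control $\pi_2(M_\xi\to\ell_1)$, and Proposition~\ref{two} bounds this only by $\|M_\xi\to\ell_1\|$, not by $\|M_\xi\to\ell_p\|$ (and no uniform comparison of the latter two is available, since by Lemma~\ref{oneone} one has $\pi_2(M_\xi\to\ell_1)=\|\xi\|_2$, which can be as large as $|\Lambda^{\leq}(m,n)|^{1/2}\|\xi\|_\infty$). So the ``constant simplification carried out above'' does not transfer to the modified endpoints: changing the first endpoint changes the interpolation parameter, and the exponent $\beta_m$ is lost. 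Your argument up to the obstacle is correct and matches the paper; the fix after that point has a genuine gap.
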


\begin{proof}
Define $\theta_m \in (0,1)$ by $\frac{1}{p} = \frac{1-\theta_m}{1} +\frac{\theta_m}{\frac{2m}{m+1}}\,.$
Then $1-\theta_m = \beta_m$. Hence by Theorem~\ref{Thm:IntMult} and as in the proof of Corollary~\ref{Lambda_estimateA},
it follows that for every $\xi = (\xi_\alpha)_{\alpha \in\Lambda^{\leq}(m,n)}$, we get
\begin{align*}
\Big(\sum_{\alpha \in \Lambda^{\leq}(m,n)} |\xi_{\alpha}|^r\Big)^{1/r} \leq \,C(m)\,\sqrt{n}\,
\|M_{\xi}\colon \mathcal{P}_{\leq m}(\mathbb{T}^n) \to \ell_1(\Lambda^{\leq}(m,n))\| \,,
\end{align*}
where $C(m) = 2 \sqrt{2}e^2 \,(\log(1 + 20 m))^{\beta_m}$.
\end{proof}

\medskip

\section{Applications}

\bigskip

\medskip

\subsection{Multipliers of analytic trigonometric polynonials}
Recall from  \eqref{P1} the definition of $\mathcal{P}_{\leq m}(\mathbb{T}^n)$, all  trigonometric analytic
polynomials of degree $\leq m$ on the $n$-dimensional torus $\mathbb{T}^n$, and its subspace
$\mathcal{P}_{=m}(\mathbb{T}^n)$ of all $m$-homogeneous polynomials.

We also recall some definition, which we already touched in the introduction, the Hardy space
\begin{equation*}
H_{\infty}(\mathbb{T}^{\infty}) := \Big\{ f \in  L_{\infty} (\mathbb{T}^{\infty}); \,\, \widehat{f}(\alpha) = 0
\,  , \, \,\, \, \forall \alpha \in \mathbb{Z}^{(\mathbb{N})} \setminus \mathbb{N}_{0}^{(\mathbb{N})} \Big\}\, ,
\end{equation*}
and its $m$-homogeneous part
\begin{equation*} \label{Hpm}
H_{\infty}^{m}(\mathbb{T}^{\infty}) := \Big\{ f \in H_{\infty} (\mathbb{T}^{\infty}); \,\,
\widehat{f} (\alpha) \neq 0 \, \Rightarrow \, \vert \alpha \vert = m \Big\}\,.
\end{equation*}

\medskip

\subsubsection{Sidon constants I} \label{sid1}

Let $1 \leq p < \infty$. By $\chi_p\big( \mathcal{P}_{\leq m}(\mathbb{T}^n) \big)$ we denote the $p$-Sidon constant
of $\mathcal{P}_{\leq m}(\mathbb{T}^n) $, that is, the best constant $c >0$ such that,  for all polynomials
$f(z)= \sum_{\alpha \in \Lambda^{\leq } (m,n)} \widehat{f}(\alpha)z^\alpha$, we have
\[
\Big(\sum_{\alpha \in \Lambda^{\leq } (m,n)} |\widehat{f}(\alpha)|^p\Big)^{\frac{1}{p}} \leq c\, \|f\|_\infty\,.
\]
Similarly we define $\chi_p\big( \mathcal{P}_{= m}(\mathbb{T}^n) \big)$,
but in this case  we only consider $m$-homogeneous trigonometric polynomials instead of all
trigonometric polynomials of degree  less than or equal $m$. Since for every
$f\in \mathcal{P}_{\leq m}(\mathbb{T}^n)$
\[
\Big(\sum_{|\alpha| \leq m} |\widehat{f}(\alpha)|^2\Big)^{\frac{1}{2}}
=\Big(\int_{\mathbb{T}^n} |f(z)|^2\,dz\Big)^{\frac{1}{2}} \leq  \|f\|_\infty\,,
\]
it follows that, for each $m,n \in \mathbb{N}$,
\[
\chi_p\big( \mathcal{P}_{= m}(\mathbb{T}^n) \big)
= \chi_p\big( \mathcal{P}_{\leq m}(\mathbb{T}^n) \big) =1, \quad\, 2\leq p \leq \infty\,.
\]
Observe also that the Bohnenblust--Hille inequality \eqref{BH} combined with H\"older's inequality imply
that there exists a~constant $C>0$ such that, for each $m,n \in \mathbb{N}$, we have
\begin{equation} \label{sido}
1\leq \chi_p\big( \mathcal{P}_{= m}(\mathbb{T}^n) \big)\leq \chi_p\big( \mathcal{P}_{\leq m}(\mathbb{T}^n) \big) \leq C^{m},
\quad\, \frac{2m}{m+1} \leq p < 2\,.
\end{equation}

\medskip

\begin{Theo} \label{app1}
Let $1 \leq p \leq \frac{2m}{m+1}$ and $\frac{1}{r}=\frac{1}{p} - \frac{1}{2}$. Then there is
a~universal positive constant $\gamma$ such that, for each $m,n \in \mathbb{N}$
\[
\frac{1}{\gamma^m}\Big( \frac{n}{m}\Big)^{\frac{m}{r}-\frac{1}{2}} \leq \chi_p\big( \mathcal{P}_{=m}(\mathbb{T}^n) \big)
\leq \chi_p\big( \mathcal{P}_{\leq m}(\mathbb{T}^n) \big) \leq \gamma^m \Big( \frac{n}{m}\Big)^{\frac{m}{r}-\frac{1}{2}}\,.
\]
\end{Theo}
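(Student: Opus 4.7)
The plan is to prove the upper bound for $\chi_p(\mathcal{P}_{\leq m}(\mathbb{T}^n))$ and the lower bound for $\chi_p(\mathcal{P}_{=m}(\mathbb{T}^n))$ separately; the middle inequality $\chi_p(\mathcal{P}_{=m}(\mathbb{T}^n)) \leq \chi_p(\mathcal{P}_{\leq m}(\mathbb{T}^n))$ is automatic since $\mathcal{P}_{=m}(\mathbb{T}^n)$ is a closed subspace of $\mathcal{P}_{\leq m}(\mathbb{T}^n)$.

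For the upper bound I would reduce to the hypercontractive Bohnenblust--Hille inequality via H\"older's inequality. Writing $q := \tfrac{2m}{m+1} \geq p$, for every $f \in \mathcal{P}_{\leq m}(\mathbb{T}^n)$ H\"older gives
\[
\Big( \sum_{\alpha} |\widehat{f}(\alpha)|^p\Big)^{1/p} \leq |\Lambda^{\leq}(m,n)|^{\frac{1}{p}-\frac{1}{q}}\Big(\sum_{\alpha} |\widehat{f}(\alpha)|^q\Big)^{1/q}.
\]
By \eqref{sido} the second factor is bounded by $C^m \|f\|_\infty$, and since $\tfrac{1}{p}-\tfrac{1}{q} = \tfrac{1}{r}-\tfrac{1}{2m}$, Lemma~\ref{surprise} yields
\[
|\Lambda^{\leq}(m,n)|^{\frac{1}{r}-\frac{1}{2m}} = (|\Lambda^{\leq}(m,n)|^{1/(2m)})^{\frac{2m}{r}-1} \leq (2\sqrt{2e})^{\frac{2m}{r}-1}\Big(1+\tfrac{n-1}{m}\Big)^{\frac{m}{r}-\frac{1}{2}}.
\]
Estimating $1 + (n-1)/m$ by a constant multiple of $n/m$ and consolidating all $m$-dependent factors into a single $\gamma^m$ produces the claimed upper bound $\gamma^m(n/m)^{m/r - 1/2}$.

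For the lower bound I would apply Theorem~\ref{Thm:KK} with $G = \mathbb{T}^n$, $\Gamma = \Lambda^=(m,n)$ (so $C_\Gamma = \mathcal{P}_{=m}(\mathbb{T}^n)$), the Bernstein $2$-embedding of $\mathcal{P}_{=m}(\mathbb{T}^n) \subset \mathcal{T}_{\leq m}(\mathbb{T}^n)$ into $\ell_\infty^N$ with $N = (1+20m)^n$ from \eqref{bernd}, $F = \ell_p(\Lambda^=(m,n))$ (whose cotype-$2$ constant is $\leq \sqrt{2}$ for $1 \leq p \leq 2$), and $\xi = \mathbf{1}$. Since $\|M_\mathbf{1}\colon \mathcal{P}_{=m}(\mathbb{T}^n) \to \ell_p(\Lambda^=(m,n))\| = \chi_p(\mathcal{P}_{=m}(\mathbb{T}^n))$ and $\sup_{\|\mu\|_{\ell_2} \leq 1}\|\mu\|_{\ell_p} = |\Lambda^=(m,n)|^{1/r}$, Theorem~\ref{Thm:KK} gives
\[
|\Lambda^=(m,n)|^{1/r} \leq 2\sqrt{2}\,e^2\,\sqrt{1+n\log(1+20m)}\,\chi_p(\mathcal{P}_{=m}(\mathbb{T}^n)).
\]
From the elementary estimate $\binom{N}{k} \geq (N/k)^k$ used inside the proof of Lemma~\ref{surprise} one has $|\Lambda^=(m,n)| = \binom{m+n-1}{m} \geq (n/m)^m$, and the identity $(n/m)^{m/r}/\sqrt{n} = (n/m)^{m/r - 1/2}/\sqrt{m}$ then yields the stated lower bound, once the subexponential factor $\sqrt{m\log(1+20m)}$ is absorbed into $\gamma^m$.

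The main technical obstacle is the uniform bookkeeping of the $m$-dependent constants --- namely the Bohnenblust--Hille constant $C^m$, the geometric factor $(2\sqrt{2e})^{2m/r - 1}$ coming from Lemma~\ref{surprise}, and the term $\sqrt{m\log(1+20m)}$ appearing in Theorem~\ref{Thm:KK} --- each of which grows at worst exponentially in $m$ and can therefore be absorbed into a single universal $\gamma^m$ by enlarging $\gamma$.
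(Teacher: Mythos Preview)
Your lower-bound argument coincides with the paper's: both feed $\xi=\mathbf{1}$ into the Kislyakov-type estimate (you invoke Theorem~\ref{Thm:KK} directly; the paper packages the identical step as Corollary~\ref{Thm:SKS} and Lemma~\ref{unc}) and then use $|\Lambda^{=}(m,n)|\ge (n/m)^m$ together with the identity $(n/m)^{m/r}/\sqrt{n}=(n/m)^{m/r-1/2}/\sqrt{m}$ to extract the stated power of $n/m$.

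Your upper-bound argument, however, takes a genuinely different route. The paper does \emph{not} argue via H\"older and the cardinality of $\Lambda^{\leq}(m,n)$; instead it imports the $p=1$ Sidon estimate $\chi_1(\mathcal{P}_{\leq m}(\mathbb{T}^n))\le C_1^{\,m}(n/m)^{(m-1)/2}$ as a known black box from \cite{DefantFrerickOrtegaOunaiesSeip} and complex-interpolates it against the Bohnenblust--Hille endpoint $\chi_{2m/(m+1)}\le C_2^{\,m}$, the interpolation parameter $\theta$ being chosen so that $\tfrac{m-1}{2}\theta=\tfrac{m}{r}-\tfrac12$. Your approach is more self-contained: H\"older from $\ell_p$ to $\ell_{2m/(m+1)}$, then \eqref{sido}, then the cardinality bound of Lemma~\ref{surprise}. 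In particular it does not need the $p=1$ case as prior input and in fact reproves it when one sets $p=1$. One caveat: your step ``estimate $1+(n-1)/m$ by a constant multiple of $n/m$'' is only legitimate for $n\ge m$; for $n<m$ the ratio is of order $m/n$, and the resulting factor $(m/n)^{m/r-1/2}$ cannot be absorbed into $\gamma^m$. This is harmless in practice, since the stated upper bound is only informative in the regime $n\ge m$ anyway, but the sentence as written overstates what the estimate delivers.
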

\bigskip

Note that  $\frac{m}{r}-\frac{1}{2} \ge 0$ whenever $1 \leq p \leq \frac{2m}{m+1}$ and $\frac{1}{r}=\frac{1}{p} - \frac{1}{2}$.
Moreover, as it should be, we have that $\frac{m}{r}-\frac{1}{2}=\frac{m-1}{2}$ for $p=1$ and $\frac{m}{r}-\frac{1}{2}= 0$ for
$p=\frac{2m}{m+1}$.

\medskip

For the homogeneous case and $p=1$ this result is proved  in \cite{DefantFrerickOrtegaOunaiesSeip} (see also \cite[Theorem 9.10]{Defant})
-- the upper estimate is based on the hypercontractivity of the Bohnenblust-Hille inequality, and the lower estimate on the
Kahane--Salem--Zygmund inequality. Here we deduce the upper inequality from the case $p=1$  by applying the complex interpolation method, and the lower estimate by the following independently interesting lemma based on
Corollary~\ref{Thm:SKS}.

\begin{Lemm} \label{unc}
Let $1 \leq p \leq \frac{2m}{m+1}$ and $z \in  \mathbb{C}^n$, and denote by $z^\ast$ the decreasing
rearrangement of $z$. Then,  for each $m \in \mathbb{N}$, we have

\begin{equation*} \label{crucial2}
(z^\ast_n)^m \leq 2\sqrt{2}e^2  \sqrt[r]{m!}\, \sqrt{\log (1+20m)} \,
\frac{1}{n^{\frac{m}{r}-\frac{1}{2}}}\,\big\|M_{z}\colon \mathcal{P}_{=m}(\mathbb{T}^n) \to \ell_p(\Lambda^{=}(m,n))\big\|\,,
\end{equation*}
where  $\frac{1}{r}=\frac{1}{p} - \frac{1}{2}$ and
\[
M_z \colon \mathcal{P}_{=m}(\mathbb{T}^n) \rightarrow \ell_{p}(\Lambda^{=}(m,n))\,,\,\, f \to \big(\widehat{f}(\alpha)(z^\ast)^\alpha\big)\,.
\]
\end{Lemm}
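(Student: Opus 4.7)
The plan is to apply Theorem~\ref{Thm:KK} (following the recipe behind Corollary~\ref{Thm:SKS}) directly in the $m$-homogeneous setting with the multiplier sequence $\xi_\alpha := (z^*)^\alpha$ for $\alpha \in \Lambda^=(m,n)$, and then to lower bound the resulting $\ell_r$-sum by a multinomial expansion that isolates $(z^*_n)^{rm}$.

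First, I take $G = \mathbb{T}^n$, $\Gamma = \Lambda^=(m,n)$ so that $C_\Gamma = \mathcal{P}_{=m}(\mathbb{T}^n)$, and $F = \ell_p(\Lambda^=(m,n))$. Bernstein's embedding from~\eqref{bernd} restricts from $\mathcal{T}_{\leq m}(\mathbb{T}^n)$ to the subspace $\mathcal{P}_{=m}(\mathbb{T}^n)$, giving a $2$-embedding into $\ell_\infty^N$ with $N \leq (1+20m)^n$. Since $C_2(\ell_p(\Lambda^=(m,n))) \leq \sqrt{2}$ for $1 \leq p \leq 2$, Theorem~\ref{Thm:KK} combined with the Hölder duality identity $\sup_{\|\mu\|_{\ell_2}\leq 1}\|(\mu_\alpha \xi_\alpha)\|_{\ell_p} = \|\xi\|_{\ell_r}$ (as in the paragraph preceding Corollary~\ref{Thm:SKSK}) for $\tfrac{1}{r}=\tfrac{1}{p}-\tfrac{1}{2}$ produces
\[
\Big(\sum_{\alpha \in \Lambda^=(m,n)} (z^*)^{r\alpha}\Big)^{1/r} \leq 2\sqrt{2}\, e^2 \, \sqrt{n \log(1+20m)} \, \big\|M_z\colon \mathcal{P}_{=m}(\mathbb{T}^n) \to \ell_p(\Lambda^=(m,n))\big\|\,.
\]

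Second, I lower bound the left-hand side via the multinomial identity
\[
\Big(\sum_{i=1}^n (z^*_i)^r\Big)^m = \sum_{\alpha \in \Lambda^=(m,n)} \binom{m}{\alpha}\, (z^*)^{r\alpha} \leq m! \sum_{\alpha \in \Lambda^=(m,n)} (z^*)^{r\alpha}\,,
\]
using $\binom{m}{\alpha}=\tfrac{m!}{\alpha_1!\cdots\alpha_n!}\leq m!$ whenever $|\alpha|=m$. Since $z^*$ is decreasingly rearranged, $\sum_{i=1}^n (z^*_i)^r \geq n\,(z^*_n)^r$, so
\[
\Big(\sum_\alpha (z^*)^{r\alpha}\Big)^{1/r} \geq \frac{n^{m/r}\,(z^*_n)^m}{(m!)^{1/r}}\,.
\]
Combining the two inequalities and using $n^{1/2}/n^{m/r}=n^{1/2-m/r}$ yields precisely the stated bound for $(z^*_n)^m$.

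The argument is essentially bookkeeping, and no genuine obstacle is anticipated. The points requiring some care are: (i) that Theorem~\ref{Thm:KK} applies to the subspace $\mathcal{P}_{=m}(\mathbb{T}^n)$ with the Bernstein embedding constant $\lambda=2$ inherited by restriction from $\mathcal{T}_{\leq m}(\mathbb{T}^n)$, and (ii) that the Hölder duality identifying $\ell_r$ with the diagonal operator norm from $\ell_2$ into $\ell_p$ is valid over the restricted index set $\Lambda^=(m,n)$. Both are immediate.
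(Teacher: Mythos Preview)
Your proof is correct and follows essentially the same route as the paper: apply the $\ell_r$-bound from Theorem~\ref{Thm:KK}/Corollary~\ref{Thm:SKS} to the multiplier $\xi_\alpha=(z^*)^\alpha$ on $\mathcal{P}_{=m}(\mathbb{T}^n)$, then lower bound $\sum_{|\alpha|=m}(z^*)^{r\alpha}$ by $\tfrac{n^m}{m!}(z^*_n)^{rm}$. The only cosmetic difference is that the paper obtains this lower bound by noting each summand is at least $(z^*_n)^{rm}$ and counting $|\Lambda^=(m,n)|=\binom{n+m-1}{m}\ge \tfrac{n^m}{m!}$, whereas you reach the identical inequality via the multinomial expansion and $\sum_i (z^*_i)^r\ge n(z^*_n)^r$.
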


\begin{proof}
From  Corollary~\ref{Thm:SKS}, we deduce that
\begin{align*}
(z^\ast_n)^{rm} \sum_{|\alpha |=m}1 & = \sum_{ |\alpha |=m} \big( (z_n^\ast)^{\alpha_1} \cdots (z_n^\ast))^{\alpha_n} \big)^r \\
& \leq \sum_{ |\alpha |=m} (z^\ast)^{r \alpha} \leq (2\sqrt{2}e^2)^r\, (n  \log (1+20m))^{\frac{r}{2}}\,\|M_z\|^r\,.
\end{align*}
Since $\operatorname{dim}\mathcal{P}_m(\mathbb{T}^n) = \binom{n+m-1}{m}$, we get
\[
(z^\ast_n)^{rm} \frac{n^m}{m!}\leq (z^\ast_n)^{rm}  \binom{n+m-1}{m}
\leq  (2\sqrt{2}e^2)^r\,(n  \log (1+20m))^{\frac{r}{2}}\, \|M_z\|^r\,,
\]
and the desired result follows by taking roots.
\end{proof}

\medskip
We are ready to give the

\begin{proof}[Proof of Theorem~\ref{app1}]  Lower bound:
If $\pmb{1} = (1, \ldots ,1) \in \mathbb{C}^{\Lambda^{=}(m,n)}$, then by the definition we obtain
\[
\|M_{\pmb{1}}\colon \mathcal{P}_{=m}(\mathbb{T}^n) \rightarrow \ell_{p}(\Lambda^{=}(m,n))\|
=\chi_p\big( \mathcal{P}_{= m}(\mathbb{T}^n) \big)\,.
\]
Then by  Lemma~\ref{unc}
\[
1 \leq  2\sqrt{2}e^2  \sqrt[r]{m!}\, \sqrt{\log (1+20m)}\,
\chi_p\big( \mathcal{P}_{= m}(\mathbb{T}^n) \big) \frac{1}{n^{\frac{m}{r}-\frac{1}{2}}}\,,
\]
and so
\begin{align*}
\frac{1}{\gamma^m}\Big( \frac{n}{m}\Big)^{\frac{m}{r}-\frac{1}{2}} & \leq
\frac{1}{2\sqrt{2}e^2m^{\frac{1}{2}} (1 + \log m)^{\frac{1}{2}}}\Big(\frac{n}{m}\Big)^{\frac{m}{r}-\frac{1}{2}}\\
& \leq \frac{m^{\frac{m}{r}-\frac{1}{2}}}{2\sqrt{2}e^2(m!)^{\frac{1}{r}} (1 + \log m)^{\frac{1}{2}}}\Big(\frac{n}{m}\Big)^{\frac{m}{r}-\frac{1}{2}}
\leq \chi_p\big( \mathcal{P}_{= m}(\mathbb{T}^n) \big)\,.
\end{align*}

\noindent Upper bound: Define $0 \leq \theta \leq 1$ by
\[
\frac{1}{p} = \frac{\theta}{1} + \frac{1-\theta}{\frac{2m}{m+1}}\,,
\]
then
\[
\frac{m-1}{2} \theta =  \frac{m}{r}-\frac{1}{2}\,.
\]
We know that
\[
\|M_{\pmb{1}}\colon \mathcal{P}_{\leq m}(\mathbb{T}^n) \rightarrow \ell_1(\Lambda^{\leq}(m,n))\| \leq C_1^m\Big(\frac{n}{m}\Big)^{\frac{m-1}{2}}
\]
and by \eqref{sido}
\[
\|M_{\pmb{1}}\colon \mathcal{P}_{\leq m}(\mathbb{T}^n) \rightarrow \ell_{\frac{2m}{m+1}}(\Lambda^{\leq}(m,n))\| \leq C_2^m\,.
\]
 Applying the complex interpolation method, we get
\[
\|M_{\pmb{1}}\colon \mathcal{P}_{\leq m}(\mathbb{T}^n) \rightarrow \ell_{p}(\Lambda^{\leq}(m,n))\|
\leq \Big(C_1^m\Big(\frac{n}{m}\Big)^{\frac{m-1}{2}}  \Big)^\theta \Big( C_2^m  \Big)^{1-\theta}
\leq \gamma^m \Big( \frac{n}{m}\Big)^{\frac{m}{r}-\frac{1}{2}}\,,
\]
and so the conclusion follows.
\end{proof}

\medskip

Why would a positive answer to Conjecture~\ref{doof?} not lead to a better lower bound?
In this case we  for $\frac{1}{s} =  \frac{m}{m-1}\big(\frac{1}{p}-\frac{m+1}{2m}\big)$ would get that
\[
\frac{1}{\gamma^m}  \Big(  \frac{n}{m} \Big)^{\frac{m}{s}-\frac{1}{s}} \leq \chi_p\big( \mathcal{P}_{= m}(\mathbb{T}^n) \big)\,.
\]
But $\frac{m}{s}-\frac{1}{s} = \frac{m}{r}-\frac{1}{2}$, where again  $\frac{1}{r}= \frac{1}{p}-\frac{1}{2}$, and so
we would not arrive at a~contradiction.

\medskip

\subsubsection{Bohr radii}

Denote by $K_n$ the $n$th Bohr radius, that is the best $0 < r \leq 1$ such that, for every $f\in  H_{\infty}(\mathbb{T}^{\infty})$,
we have
\[
\sum_{\alpha \in \mathbb{N}_0^{(\mathbb{N})}} |\widehat{f}(\alpha)|\,r^{|\alpha|} \leq \|f\|_\infty\,.
\]
It is known  that
\begin{equation}\label{Bohr}
\lim_{n \to \infty} \frac{K_n}{\sqrt{\frac{\log n}{ n}}} =1\,;
\end{equation}
this was established in \cite{Bohrradius} extending an earlier result of \cite{DefantFrerickOrtegaOunaiesSeip}, which basically proved
that the limit is between $1$ and $\sqrt{2}$. For a detailed account on all this, see  the monograph \cite{Defant}.

The original proof of the lower estimate in \eqref{Bohr} is  based on the Kahane--Salem--Zygmund inequality (see, e.g.,
\cite[Theorem~7.1]{Defant}). Let us indicate an alternative argument based on Theorem~\ref{Thm:KK}. We have that for
each $m\in \mathbb{N}$
\[
K_n \leq K^m_n\,,
\]
where $K^m_n$ is defined like $K_n$ only  taking into account functions from $H_m(\mathbb{T}^\infty)$ instead of all functions from $H_m(\mathbb{T}^\infty)$. Then a simple reformulation shows that
\[
K_n \leq K^m_n = \frac{1}{ \sqrt[m]{\chi(m,n)}}\,
\]
(see \cite[(9.15)]{Defant}). Using the lower estimate from Theorem~\ref{app1} (which was proved with Kislyakov's ideas), and copying
word by word the proof given in \cite[Theorem~8.22]{Defant}, we obtain an alternative approach to the  upper  bound in \eqref{Bohr}
 without using the Kahane--Salem--Zygmund inequality.

\medskip

\subsubsection{Monomial convergence I}

Let $V$ be a subset of $H_\infty(\mathbb{T}^\infty)$, then
\[
\mon V := \bigg\{ z \in \mathbb{C}^{\mathbb{N}};\, \, \sum_{\alpha \in \mathbb{N}_0^{(\mathbb{N})}}
\vert \widehat{f}(\alpha) z^{\alpha} \vert < \infty \,\,\,\text{for all }  f\in  V \bigg\}
\]
is called the set of monomial convergence of $V$. If $V$ is a closed subspace of $H_\infty(\mathbb{T}^n)$,
then a~simple closed graph argument shows that $z \in  \mon V$ if and only if there is $C = C(z) >0$ such that,
for every $f \in  V$,
\[
\sum_{\alpha \in \mathbb{N}_0^{(\mathbb{N})}}  |\widehat{f}(\alpha) z^\alpha| \leq C \|f\|_\infty\,.
\]
A sequence  $\xi = (\xi_\alpha)_{\alpha \in\mathbb{N}_0^{(\mathbb{N})}}$ is said to be multiplicative whenever
for all $\alpha, \beta \in \mathbb{N}_0^{(\mathbb{N})}$, we have $\xi_{\alpha + \beta} = \xi_{\alpha} \xi_{\beta} $\,.
Equivalently, $\xi$ is multiplicative if and only if there is $z \in \mathbb{C}^{\mathbb{N}}$ such that, for all
$\alpha \in \mathbb{N}_0^{(\mathbb{N})}$, we have $\xi_\alpha = z^{\alpha}$, for all $\alpha \in \mathbb{N}_0^{(\mathbb{N})}$
(if $\xi$ is multiplicative, then define $z_k = \xi_{e_k}$ for each $k \in \mathbb{N}$)\,.

\medskip

\begin{Rema}
Let $V \subset H_\infty(\mathbb{T}^\infty)$ be a closed subspace. Then $\mon V$ equals the set of all  multiplicative
$\ell_1(\Gamma)$-multipliers $\xi = (\xi_\alpha)_{\alpha \in\mathbb{N}_0^{(\mathbb{N})}}$ for $V$, where
\[
\Gamma :=\bigcup_{f \in V} \supp \widehat{f} \subset \mathbb{Z}^{(\mathbb{N})}\,.
\]
\end{Rema}

We refer to the monograph \cite{Defant} for a~detailed exposition on the sets of monomial convergence of
$H_\infty(\mathbb{T}^\infty)$ and its closed  subspace $H_\infty^m(\mathbb{T}^\infty)$ (together with all
its consequences for spaces of holomorphic functions in infinitely many variables and ordinary Dirichlet series).
In particular, the following  results  from  \cite{BayartDefantFrerickMaestreSevilla} (see also
\cite[Theorem~10.1 and  Theorem~10.15]{Defant}) give two (almost) complete description of both sets.

\medskip

\begin{Theo} \label{MON}\text{}
\begin{itemize}
\item[{\rm(i)}] $ \mon H_\infty^m(\mathbb{T}^\infty) = \ell_{\frac{2m}{m-1}, \infty}$ for each $m\in \mathbb{N}$\,{\rm;}
\item[{\rm(ii)}] For every $z \in \mathbb{C}^{\mathbb{N}}$ the following two statements hold\,{\rm:}
\begin{enumerate}
\item[{\rm(a)}] If \,$\displaystyle\limsup_{n \to \infty} \frac{1}{\log n} \sum_{j=1}^{n} z^{* 2}_{j} < 1$,
then\, $z \in  \mon H_\infty(\mathbb{T}^\infty)$\,.
\item[{\rm(b)}] If  $z \in  \mon H_\infty(\mathbb{T}^\infty)$, then\,\,
$\displaystyle\limsup_{n \to \infty} \frac{1}{\log n} \sum_{j=1}^{n} z^{*2}_{j} \leq 1$. Moreover, here the converse
implication is false.
\end{enumerate}
\end{itemize}
\end{Theo}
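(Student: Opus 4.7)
The plan is to handle the two parts separately, using in both cases the remark immediately preceding the statement: $\mon V$ coincides with the set of multiplicative $\ell_1$-multipliers $(z^\alpha)$ of $V$. A closed graph argument then reduces everything to uniform-in-$n$ bounds on the multiplier norms of $M_z$ acting on the finite-dimensional polynomial spaces $\mathcal{P}_{=m}(\mathbb{T}^n)$ and $\mathcal{P}_{\leq m}(\mathbb{T}^n)$. The hard inclusions require lower bounds of Kahane--Salem--Zygmund type, which here are replaced by Lemma~\ref{unc} and Theorem~\ref{app1}, while the easy inclusions rest on the hypercontractive Bohnenblust--Hille inequality~\eqref{BH}.

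For part~(i), the inclusion $\mon H_\infty^m(\mathbb{T}^\infty) \subset \ell_{\frac{2m}{m-1},\infty}$ would be an immediate consequence of Lemma~\ref{unc}: the closed graph theorem yields, for $z \in \mon H_\infty^m$, a uniform bound $\sup_n \|M_z\colon \mathcal{P}_{=m}(\mathbb{T}^n) \to \ell_1(\Lambda^{=}(m,n))\| < \infty$, and Lemma~\ref{unc} with $p = 1$ (so $r = 2$) forces $z^\ast_n \leq C\, n^{-(m-1)/(2m)}$, equivalently $z \in \ell_{\frac{2m}{m-1},\infty}$. For the reverse inclusion, given $z \in \ell_{\frac{2m}{m-1},\infty}$ and $f \in H_\infty^m(\mathbb{T}^\infty)$, I would split $z$ into dyadic layers $\{j : 2^{-k-1} < z^\ast_j \leq 2^{-k}\}$, estimate the contribution of each layer to $\sum_{|\alpha|=m}|\widehat f(\alpha)\, z^\alpha|$ by combining~\eqref{BH} with H\"older applied to the conjugate exponents $\frac{2m}{m+1}$ and $\frac{2m}{m-1}$, and then sum over $k$; the dyadic splitting is necessary because $z \in \ell_{\frac{2m}{m-1},\infty}$ does not force $\sum_j |z_j|^{2m/(m-1)} < \infty$.

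For (ii)(a), I would decompose $f \in H_\infty(\mathbb{T}^\infty)$ into its homogeneous components $f = \sum_m f_m$ (using $\|f_m\|_\infty \leq \|f\|_\infty$ via a Cauchy-type projection in the last variable of the torus) and bound each piece $\sum_{|\alpha|=m}|\widehat{f}_m(\alpha)\, z^\alpha|$ exactly as in part~(i). The strict hypothesis $\limsup_n (\log n)^{-1}\sum_{j \leq n} (z^\ast_j)^2 < 1$ should then feed, via a Weissler-type hypercontractive refinement, into a bound of the form $\sum_{|\alpha|=m}|\widehat{f}_m(\alpha)\, z^\alpha| \leq C\,\rho^{m}\|f\|_\infty$ with some $\rho < 1$, whose geometric sum over $m$ yields $z \in \mon H_\infty(\mathbb{T}^\infty)$. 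The main technical obstacle here is extracting such a $\rho$ despite the $C^{\sqrt{m \log m}}$ loss in~\eqref{BH}; the strict inequality in the hypothesis is precisely what provides the necessary slack.

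For the necessary condition in (ii)(b), I would exploit the Bohr radius asymptotic $K_n \sim \sqrt{(\log n)/n}$ from~\eqref{Bohr}: restricting a candidate $f \in H_\infty(\mathbb{T}^\infty)$ to its first $n$ variables and invoking the sharp behaviour of $K_n$ yields $\sum_{j=1}^n (z^\ast_j)^2 \leq (1 + o(1))\log n$, i.e., $\limsup \leq 1$. Since~\eqref{Bohr} itself can be derived from Theorem~\ref{app1}, as noted in the subsection on Bohr radii, the entire argument remains independent of the Kahane--Salem--Zygmund inequality. The hardest task is the ``moreover'' part, exhibiting some $z$ with $\limsup = 1$ yet $z \notin \mon H_\infty(\mathbb{T}^\infty)$: my plan would be to perturb the critical sequence $z^\ast_j = \sqrt{(\log j)/j}$ along a sparse subsequence so that the limsup is preserved, and to test against a homogeneous polynomial produced by iterated Sidon-type constructions whose monomial series at $z$ diverges. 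This counterexample is the most delicate ingredient in the whole theorem.
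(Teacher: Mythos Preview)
First, a framing point: the paper does not prove Theorem~\ref{MON} in full. It quotes the result from \cite{BayartDefantFrerickMaestreSevilla} and \cite{Defant} and then supplies \emph{alternative} arguments only for the two necessary-condition pieces---the inclusion $\mon H_\infty^m(\mathbb{T}^\infty)\subset\ell_{\frac{2m}{m-1},\infty}$ in~(i), and the implication in~(ii)(b)---replacing the original KSZ-based proofs by Kislyakov-type ones. The reverse inclusion in~(i), statement~(ii)(a), and the counterexample in~(ii)(b) are simply cited. Your proposal sketches a complete proof; for the cited parts your outlines (dyadic splitting plus~\eqref{BH} for~(i), homogeneous decomposition plus hypercontractivity for~(ii)(a)) are reasonable descriptions of the literature, but they are not what the paper does here.

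For the inclusion $\mon H_\infty^m(\mathbb{T}^\infty)\subset\ell_{\frac{2m}{m-1},\infty}$ your plan---closed graph to get $\sup_n\|M_{z^\ast}\colon\mathcal P_{=m}(\mathbb T^n)\to\ell_1\|<\infty$, then Lemma~\ref{unc} with $p=1$ to force $z^\ast_n\ll n^{-(m-1)/(2m)}$---is exactly the paper's alternative proof.

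For~(ii)(b), however, your route through the Bohr radius has a genuine gap. The asymptotic $K_n\sim\sqrt{(\log n)/n}$ in~\eqref{Bohr} controls only the \emph{constant} sequence $(r,\ldots,r,0,\ldots)$: it says for which $r$ that sequence lies in $\mon H_\infty(\mathbb{T}^n)$, but nothing about an arbitrary decreasing $z^\ast$. There is no way to ``restrict $f$ to $n$ variables and invoke the sharp behaviour of $K_n$'' and read off $\sum_{j\le n}(z^\ast_j)^2\le(1+o(1))\log n$; the scalar $K_n$ has already discarded the information you need. The paper's alternative argument is different and more direct: it applies Theorem~\ref{Thm:SK} to the multiplier $M_{z^\ast}$ to obtain, for every $m,n$,
\[
\Big(\sum_{\alpha\in\mathbb N_0^n,\ |\alpha|=m}|(z^\ast)^{\alpha}|^{2}\Big)^{1/2}\le\gamma\,\sqrt{n\log m},
\]
and then refers to the combinatorial computation in \cite[Section~10.5.1]{Defant}, which extracts the bound $\limsup_n(\log n)^{-1}\sum_{j\le n}(z^\ast_j)^2\le 1$ from this family of estimates (the key steps being the lower bound $\sum_{|\alpha|=m}|(z^\ast)^{\alpha}|^2\ge(m!)^{-1}\big(\sum_{j=m}^n(z^\ast_j)^2\big)^m$ and the choice $m\approx\log n$). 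If you unwind what actually produces the upper bound for $K_n$ in the Bohr-radius subsection, you arrive at precisely this multiplier estimate---so you must invoke it directly, not the number $K_n$.
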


\medskip

In fact the 'lower inclusions' for $\mon H_\infty^m(\mathbb{T}^\infty)$ and $\mon H_\infty^m(\mathbb{T}^\infty)$  follow
from the Kahane--Salem--Zygmund theorem. Alternatively, the following two proof show that these lower inclusions also may
be deduced from  Theorem~\ref{Thm:KK}.

\begin{proof}[Alternative proof of $\ell_{\frac{2m}{m-1}} \subset \mon H_\infty^m(\mathbb{T}^\infty)$ in Theorem~$\ref{MON}$]
Fix  $z \in \mon H_\infty(\mathbb{T}^\infty)$, and recall  that  then the decreasing rearrangement
$z^\ast \in \mon H_\infty(\mathbb{T}^\infty)$ (see \cite[Remark 10.4]{Defant}). Then
\[
M_{z^\ast}\colon H_\infty(\mathbb{T}^\infty) \rightarrow \ell_1({\mathbb{N}_0^{(\mathbb{N})})}\,,\,\,
f \to \big(\widehat{f}(\alpha)(z^\ast)^\alpha\big)
\]
is bounded. Consequently, by Lemma~\ref{unc} there is some universal constant $\gamma >0$ such that, for each $m\in \mathbb{N}$,
\[
(z^\ast_n)^m \leq \gamma  \sqrt{m!}\, \log m \, \frac{1}{n^{\frac{m-1}{2}}}\,\|M_{z^\ast }\|\,.
\]
Taking the $m$th-rot, we conclude that
\[
z^\ast_n \leq \gamma^\frac{1}{m}  \sqrt{m!}^\frac{1}{m}\, (\log m)^\frac{1}{m}\,
\|M_{z^\ast }\|^\frac{1}{m} \frac{1}{n^{\frac{m-1}{2m}}} \ll_m \frac{1}{n^\frac{m-1}{2m}}\,,
\]
and this completes the proof.
\end{proof}

\medskip

\begin{proof}[Alternative proof of ${\rm(2b)}$ in Theorem~\ref{MON}] With the closed graph argument from the preceding
proof and Theorem~\ref{Thm:SK}, we see that for some constant $\gamma >0$ and each $m,n\in \mathbb{N}$, we have
\[
\Big(\sum_{\substack{\alpha \in \N_0^{n}\\ |\alpha |=m}} |(z^\ast)^{\alpha}|^2\Big)^{\frac{1}{2}}
\leq \gamma\,\sqrt{ n \log m}\,.
\]
Then the proof finishes exactly as in \cite[Section 10.5.1]{Defant}.
\end{proof}

\medskip

\subsubsection{Bohr-Bohnenblust-Hille theorem}
Referring to a couple of results, which were worked out in  the recent monograph \cite{Defant}, we intend to show that our
alternative approach to Theorem~\ref{MON}.$(ii.b)$ leads to an alternative solution of Bohr's famous absolute convergence
problem on ordinary Dirichlet series.

This problem  asked for the largest possible width $S$ of the strip in the complex plane on which an ordinary
Dirichlet series $D=\sum a_n n^{-s}$ converges uniformly but not absolute (see \cite[Section 1]{Defant}).

Bohr himself established the upper estimate $S \leq~1/2$ (see \cite[Proposition 1.10]{Defant}), but he was not able to decide
whether this upper bound is optimal. A~non-trivial reformulation (still due to Bohr, see \cite[Proposition~1.24]{Defant})
shows that
\begin{equation} \label{S}
S = \sup_{D \in \mathcal{H}_{\infty}} \sigma_a(D)\,,
\end{equation}
where  $\mathcal{H}_\infty$ denotes the Banach space of all Dirichlet series $D$, which on the positive half plane converge
pointwise to a bounded (and then necessarily holomorphic) function, and $\sigma_a(D) \in \mathbb{R}$ defines  the abscissa
of absolute convergence of $D$.

On the other hand,
\[
\mathcal{H}_\infty =  H_\infty(\mathbb{T}^\infty)\,,
\]
i.e., there is an isometric linear bijection between both Banach spaces  preserving  Dirichlet and Fourier coefficients.
More precisely, if this bijection identifies \mbox{$f\in H_{\infty}(\mathbb{T}^{\infty})$} and $D = \sum a_n n^{-s} \in \mathcal{H}_\infty$,
then  $a_n = \widehat{f}(\alpha)$, whenever $n = \mathfrak{p}^\alpha$ (here $\mathfrak{p} = (p_n)$ stands for the sequence
of primes (see \cite[Corollary 5.3]{Defant}). Then this fact combined with \eqref{S} shows that
\[
S = \inf \Big\{ \sigma >0;\,\,\frac{1}{\mathfrak{p}^{\sigma}} \in \mon H_\infty(\T^\infty) \Big\}
\]
(see \cite[(10.5)]{Defant}). As a~consequence  we see that Theorem~\ref{MON}.$(iib)$, for which we gave an alternative
proof using  Kislyakov's ideas condensed in Theorem~\ref{Thm:KK}, immediately proves that $S\ge 1/2$.

So all in all, we arrive at a~new proof of the so-called Bohr-Bohnenblust-Hille Theorem: $S= \frac{1}{2}$, which in fact
is in the very center of the discussion in the monograph \cite{Defant} (for the highly non-trivial original proof due to
Bohnenblust and Hille from 1931 see  \cite[Section 2]{Defant}. We finally remark that this  monograph also contains a~couple
of other  proof -- none of them being trivial.

\medskip

\subsection{Multipliers of functions on  Boolean cubes}

For $N \in \mathbb{N}$ let $\mathcal{B}_N$ be the set of all functions $f:\{-1,1\}^{N} \rightarrow \R$.
Recall for $f\in \mathcal{B}_N$ the expectation is given by
\[
\mathbb{E}\big[ f \big] := \frac{1}{2^{N}} \sum_{x \in \{-1,1\}^{N}}{f(x)}\,.
\]
The dual group of $\{-1,1\}^{N}$ actually consists of the set of all Walsh functions $\chi_{S}$ for $S \subset [N]$, which allows
to  associate to each such $f \in \mathcal{B}_N$ its Fourier-Walsh expansion
\begin{equation}\label{equa:FourierExpansionShort}
f(x) = \sum_{S \subset [N]}{\widehat{f}(S) \, x^S}\, , \hspace{3mm} x \in \{-1, 1\}^{N}\,,
\end{equation}
where $x^{S}:= \chi_{S}(x):= \prod_{n \in S}x_{n}$ are the Walsh functions, and the coefficients are given by
$\widehat{f}(S) = \mathbb{E}[ f \chi_{S} ]$. Thereby, a~nonzero function $f$ of degree $d$ satisfies that $\widehat{f}(S) = 0$
provided $|S| > d$. We say that $f$ is $m$-homogeneous whenever $\widehat{f}(S) = 0$ provided $|S| \neq m$.

Given $m,d \in \mathbb{N}$  with $m,d \leq N$, we write $\mathcal{B}_N^{=m}$ for all $m$-homogeneous functions in $\mathcal{B}_N$,
and $\mathcal{B}_N^{\leq d}$ for all  functions of degree $\leq d$. Moreover, we define
\[
\mathcal{B} := \bigcup_N \mathcal{B}_N,\,\,\, \, \mathcal{B}^{= m} := \bigcup_N \mathcal{B} ^{= m}_N, \,\,\, \text{and}\,\,\,
\mathcal{B}^{\leq d} := \bigcup_N \mathcal{B} ^{\leq d}_N\,.
\]
Similar to \eqref{BH}, we have the following 'hypercontractive' Bohnenblust-Hille inequality for functions on the Boolean cube from \cite{DefantMastyloPerezB}: There is a~universal constant $C >1$ such that, for each $d, N\in \mathbb{N}$ and for every
$f\in \mathcal{B}_N^{\leq d}$, we have
\begin{align} \label{BHBoole}
\Big(\sum_{S \subset [N]\colon |S|\leq d} |\widehat{f}(S)|^{\frac{2m}{m+1}}\Big)^{\frac{m+1}{2m}} \leq C^{\sqrt{m \log m}} \|f\|_\infty\,.
\end{align}

\medskip
\subsubsection{Multipliers}

We start with variants of Theorem~\ref{Thm:KK} and Corollary~\ref{Thm:SK} for functions on the Boolean cubes.

\medskip

\begin{Theo} \label{MietekM} Assume that $1 \leq p \leq 2$ and $d,N \in \mathbb{N}$ with $d \leq N$. Define $1 \leq r < \infty$ by $\frac{1}{r}=\frac{1}{p}-\frac{1}{2}$
\begin{itemize}
\item[{\rm(i)}]
For every $ \xi=(\xi_S)_{S \subset [N]}$
\begin{align*}
\frac{1}{\sqrt{1 +N \log 2}}\Big(\sum_{S \subset [N]} |\xi_S|^r\Big)^{\frac{1}{r}} \leq
2\sqrt{2} e^2 \big\|M_\xi\colon \mathcal{B}_N \to \ell_p(\{S \colon S \subset [N] \})\big\|\,.
\end{align*}
\item[{\rm(ii)}] For every  $\xi=(\xi_S)_{S \subset [N], |S|\leq d}$
\begin{align*}
\frac{1}{\sqrt{1+N\log(1+20 d)}}&\Big(\sum_{_{S \subset [N], |S|\leq d}} |\xi_S|^r\Big)^{\frac{1}{r}} \\
& \leq 2\sqrt{2} e^2 (1 +\sqrt{2})^d \big\|M_\xi\colon  \mathcal{B}^{\leq d}_N \to \ell_p(\{S \colon |S|\leq d \})\big\|\,.
\end{align*}
Moreover, in the homogeneous case  $\xi=(\xi_S)_{S \subset [N], |S|= d}$, we may replace the constant on the right side
by $2\sqrt{2} e^2 2^{d-1}$.
\end{itemize}
\end{Theo}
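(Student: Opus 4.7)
\medskip
\noindent\textbf{Proof plan.}
The strategy is to apply Theorem~\ref{Thm:KK} to $G=\{-1,1\}^N$ in both parts, with a suitable choice of $\Gamma\subset \widehat G=\{\chi_S:S\subset[N]\}$ and a $\lambda$-embedding $I\colon C_\Gamma\to \ell_\infty^K$. Throughout I combine this with the cotype bound $C_2(\ell_p(\Gamma))\leq \sqrt 2$ valid for $1\leq p\leq 2$, and with the H\"older-duality identity
\[
\Big(\sum_{S\in \Gamma}|\xi_S|^r\Big)^{1/r}\,=\,\sup_{\|\mu\|_{\ell_2(\Gamma)}\leq 1}\|(\mu_S\xi_S)\|_{\ell_p(\Gamma)},\qquad \tfrac1r=\tfrac1p-\tfrac12,
\]
already exploited in Corollary~\ref{Thm:SKS}. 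Once $(\lambda,K)$ is fixed, Theorem~\ref{Thm:KK} delivers
\[
\Big(\sum_{S\in \Gamma}|\xi_S|^r\Big)^{1/r}\leq e^2\,\sqrt 2\,\lambda\,\sqrt{1+\log K}\;\|M_\xi\colon C_\Gamma\to \ell_p(\Gamma)\|,
\]
so in each part the task reduces to exhibiting a good embedding.

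For part~(i), take $\Gamma=\mathcal{P}([N])$, so that $C_\Gamma=\mathcal B_N$. Since $\{-1,1\}^N$ is discrete of cardinality $2^N$, evaluation at all points is an \emph{isometric} embedding $I\colon \mathcal B_N\hookrightarrow \ell_\infty^{2^N}$, i.e.\ $\lambda=1$ and $K=2^N$. Plugging into the displayed inequality gives $\sqrt{1+\log 2^N}=\sqrt{1+N\log 2}$, and the claim follows.

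For part~(ii), take $\Gamma=\{S:\,|S|\leq d\}$, so $C_\Gamma=\mathcal B_N^{\leq d}$. The plan is to build a $\lambda$-embedding
\[
I\colon \mathcal B_N^{\leq d}\longrightarrow \ell_\infty^{(1+20d)^N}
\]
in two steps: first, given $f\in \mathcal B_N^{\leq d}$ with Walsh expansion $f=\sum_{|S|\leq d}\widehat f(S)\chi_S$, \emph{extend} it to the multi-affine trigonometric polynomial $\tilde f(z):=\sum_{|S|\leq d}\widehat f(S)\,z^S\in \mathcal T_{\leq d}(\mathbb T^N)$; second, apply Bernstein's $2$-embedding \eqref{bernd} of $\mathcal T_{\leq d}(\mathbb T^N)$ into $\ell_\infty^{(1+20d)^N}$. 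Since $\{-1,1\}^N\subset \mathbb T^N$, one has $\|f\|_{\{-1,1\}^N}\leq \|\tilde f\|_{\mathbb T^N}$ for free; the nontrivial direction is the classical Chebyshev-type comparison
\[
\|\tilde f\|_{\mathbb T^N}\leq (1+\sqrt 2)^d\,\|f\|_{\{-1,1\}^N},
\]
for multi-affine polynomials of degree $\leq d$, which sharpens to $\|\tilde f\|_{\mathbb T^N}\leq 2^{d-1}\|f\|_{\{-1,1\}^N}$ in the $d$-homogeneous case. Composing the two steps gives an embedding with $\lambda=2(1+\sqrt 2)^d$ in general, and $\lambda=2^d$ in the homogeneous case. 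Theorem~\ref{Thm:KK} together with the duality identity then produces the advertised constants $2\sqrt 2\,e^2\,(1+\sqrt 2)^d$ and $2\sqrt 2\,e^2\,2^{d-1}$, respectively, multiplied by $\sqrt{1+N\log(1+20d)}$.

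The main technical ingredient---and the only nonmechanical step---is the sup-norm comparison between $\{-1,1\}^N$ and $\mathbb T^N$ for multi-affine polynomials and its homogeneous refinement; these reflect the one-variable gap between the complex unit circle and the real interval, lifted to several variables, and are the source of the factors $(1+\sqrt 2)^d$ and $2^{d-1}$. Everything else amounts to bookkeeping around Theorem~\ref{Thm:KK}.
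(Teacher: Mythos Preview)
Your proposal is correct and follows essentially the same route as the paper's proof: apply Theorem~\ref{Thm:KK} with $C_2(\ell_p)\leq\sqrt2$ and the H\"older--duality identity, using the tautological isometric embedding $\mathcal B_N\hookrightarrow\ell_\infty^{2^N}$ for~(i), and for~(ii) the chain $\mathcal B_N^{\leq d}\hookrightarrow\mathcal P_{\leq d}(\mathbb T^N)\hookrightarrow\ell_\infty^{(1+20d)^N}$ via the Chebyshev-type comparison and Bernstein's embedding~\eqref{bernd}. The only cosmetic difference is that the paper inserts the intermediate cube $[-1,1]^N$ explicitly (using that tetrahedral polynomials attain their sup on the vertices, so $\mathcal B_N^{\leq d}\hookrightarrow\mathcal P_{\leq d}([-1,1]^N)$ is isometric) and then cites Klimek~\cite{K} for the $(1+\sqrt2)^d$-embedding $\mathcal P_{\leq d}([-1,1]^N)\hookrightarrow\mathcal P_{\leq d}(\mathbb T^N)$, respectively Visser~\cite{V} for the $2^{d-1}$-embedding in the homogeneous case; your single comparison $\|\tilde f\|_{\mathbb T^N}\leq(1+\sqrt2)^d\|f\|_{\{-1,1\}^N}$ is precisely the composition of these two facts.
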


\begin{proof}
For the proof of both statements we  use the fact that $C_2(\ell_p(\Gamma)) \leq \sqrt{2}$ for $1 \leq p \leq 2$. Consider
then for the proof of ${\rm(i)}$ the  canonical isometric embedding
\[
\mathcal{B}_N \ni f \mapsto (f(x))_{x \in \{-1,1\}^N} \in \ell_\infty^{2^N}\,.
\]
Then the conclusion is immediate from Theorem~\ref{Thm:KK}. The proof of ${\rm(ii)}$ is slightly more involved: Denote by
$\mathcal{P}_{\leq d}([-1,1])^N)$ the space of all real polynomials
$f(x)=\sum_{\alpha\in \mathbb{N}_0^n: |\alpha|\leq d} c_\alpha x^\alpha, \, x \in \mathbb{R}^N$, and endow it with the
supremum norm on the $N$-dimensional cube $[0,1]^N$. Since every $f \in \mathcal{B}_N$ can be viewed as a~tetrahedral
polynomial in $ \mathcal{P}_{\leq d}([-1,1])^N) $ with equal norm, the canonical embedding
\[
\mathcal{B}^{\leq d} \longrightarrow \mathcal{P}_{\leq d}([-1,1])^N)
\]
is isometric. Next we look at the canonical embedding
\[
\mathcal{P}_{\leq d}([-1,1])^N) \longrightarrow  \mathcal{P}_{\leq d}(\mathbb{T}^N)\,,
\]
which by a~result of  Klimek \cite{K} is an $(1 + \sqrt{2})^d$ -embedding, and finally we recall that by  \eqref{bernd}
there is a~$2$-embedding
\[
I\colon  \mathcal{P}_{\leq d}(\mathbb{T}^N) \to \ell_{\infty}^M\,,
\]
where  $M = (1 + 20 m)^N$. Then the conclusion again follows from Theorem~\ref{Thm:KK}. In the $d$-homogeneous case we
replace Klimek's by a~result of Visser \cite{V}, which states that the canonical map  from $\mathcal{P}_{= d}([-1,1])^N)$
into $\mathcal{P}_{=d}(\mathbb{T}^N)$ is a~$2^{d-1}$-\,embedding. This completes the proof.
\end{proof}

\medskip

\subsubsection{Sidon constants II}

The preceding theorem is used to obtain  the following analog of Theorem~\ref{app1}.

\medskip
\begin{Theo} \label{appA}
Let  $1\leq p \leq \frac{2m}{m+1}$ and $r>0$ with $\frac{1}{r}=\frac{1}{p} - \frac{1}{2}$. Then, there is
a~universal constant  $\gamma \geq 1$ such that, for each positive integers $m \leq N$
\[
\frac{1}{\gamma^m}\Big(\frac{N}{m}\Big)^{\frac{m}{r}-\frac{1}{2}} \leq \chi_p\big( \mathcal{B}_{N}^{=m} \big) \leq
\chi_p\big( \mathcal{B}_{N}^{\leq m} \big) \leq \gamma^m \Big( \frac{N}{m}\Big)^{\frac{m}{r}-\frac{1}{2}}\,.
\]
\end{Theo}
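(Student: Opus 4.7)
The plan is to mirror the proof of Theorem~\ref{app1} step by step, replacing the trigonometric tools by their Boolean counterparts: Theorem~\ref{MietekM}(ii) in its $m$-homogeneous form (with the Visser constant $2^{m-1}$) takes the role of Corollary~\ref{Thm:SKS}, the hypercontractive Bohnenblust--Hille inequality \eqref{BHBoole} takes the role of \eqref{BH}, and the rest is complex interpolation between the endpoints $p=1$ and $p=\frac{2m}{m+1}$.

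For the lower bound, I would apply the $m$-homogeneous case of Theorem~\ref{MietekM}(ii) to the constant sequence $\mathbf{1}\in\mathbb{C}^{\{S\subset [N]\,:\,|S|=m\}}$, whose multiplier norm equals $\chi_p(\mathcal{B}_N^{=m})$ by definition. This immediately gives
\[
\binom{N}{m}^{1/r}\;\le\; 2\sqrt{2}\,e^{2}\,2^{m-1}\,\sqrt{1+N\log(1+20m)}\;\chi_p(\mathcal{B}_N^{=m}).
\]
Using $\binom{N}{m}\ge (N/m)^m$ on the left and $\sqrt{1+N\log(1+20m)}\le C\sqrt{N\log(m+1)}$ on the right, the $N$-dependence collapses to $(m\log(m+1))^{-1/2}(N/m)^{m/r-1/2}$, and absorbing the subexponential factor $\sqrt{m\log(m+1)}$ together with $2^{m-1}$ into a single exponential constant produces the claimed lower bound.

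For the upper bound, I would interpolate between the two endpoint Sidon estimates. At $p=\frac{2m}{m+1}$, the Boolean inequality \eqref{BHBoole} together with H\"older's inequality yields $\chi_{2m/(m+1)}(\mathcal{B}_N^{\leq m})\leq C_2^m$, the exact Boolean analog of \eqref{sido}. At $p=1$ one needs $\chi_1(\mathcal{B}_N^{\leq m})\leq C_1^m (N/m)^{(m-1)/2}$, the Boolean analog of the Defant--Frerick--Ortega--Ounaies--Seip Sidon estimate; this can either be cited or re-derived along the same Kislyakov pattern by applying Theorem~\ref{MietekM}(ii) at $p=1$ and combining with \eqref{BHBoole}. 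Choosing $\theta\in[0,1]$ via $\frac{1}{p}=\theta+(1-\theta)\frac{m+1}{2m}$, a short computation gives $\theta(m-1)/2=m/r-1/2$, and the complex interpolation formula $[\ell_1,\ell_{2m/(m+1)}]_\theta\cong \ell_p$ applied to the diagonal operator $M_{\mathbf{1}}\colon\mathcal{B}_N^{\leq m}\to \ell_p$ returns the upper bound $\gamma^m(N/m)^{m/r-1/2}$.

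The main obstacle is not conceptual but purely arithmetic: verifying that the exponent of $N/m$ produced by the multiplier inequality on the lower-bound side and the exponent produced by the interpolation parameter $\theta$ on the upper-bound side both collapse to $m/r-1/2$, and that all exponential constants $2^m$, $C_1^m$, $C_2^m$, together with the subexponential factor $\sqrt{m\log(m+1)}$, can be absorbed into a single universal $\gamma^m$. A secondary concern is whether the $p=1$ Boolean Sidon estimate is available off the shelf; if not, it must be derived inline, but this too fits the same Kislyakov scheme built on Theorem~\ref{MietekM} and introduces no genuinely new difficulty.
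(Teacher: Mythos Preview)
Your lower-bound argument coincides with the paper's: both apply the $m$-homogeneous case of Theorem~\ref{MietekM}(ii) to $\xi=\mathbf{1}$ (the paper packages this step as Lemma~\ref{unc1}, but the content is identical).

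For the upper bound the routes diverge. The paper does not redo any interpolation in the Boolean setting; instead it simply views $f\in\mathcal{B}_N^{\le m}$ as a polynomial $F\in\mathcal{P}_{\le m}(\mathbb{T}^N)$, invokes the already-proved Theorem~\ref{app1} for~$F$, and pays the Klimek constant $(1+\sqrt{2})^m$ to pass from $\|F\|_{\mathcal{P}_{\le m}(\mathbb{T}^N)}$ back to $\|f\|_{\mathcal{B}_N^{\le m}}$. Your plan to interpolate directly between the Boolean endpoints $p=1$ and $p=\tfrac{2m}{m+1}$ also works and yields the same exponent, but it requires the Boolean $p=1$ Sidon estimate $\chi_1(\mathcal{B}_N^{\le m})\le C_1^m(N/m)^{(m-1)/2}$ as input. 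Here your proposal contains a small misconception: this \emph{upper} bound cannot be ``re-derived along the same Kislyakov pattern by applying Theorem~\ref{MietekM}(ii)'', since Theorem~\ref{MietekM} only produces \emph{lower} bounds on multiplier norms (hence lower bounds on $\chi_p$). The missing $p=1$ estimate follows instead from \eqref{BHBoole} together with H\"older's inequality and the cardinality bound $\sum_{k\le m}\binom{N}{k}\le C^m(N/m)^m$, or alternatively via Klimek plus the trigonometric $p=1$ case of Theorem~\ref{app1} --- which is precisely the paper's shortcut, applied at all $p$ simultaneously rather than just at $p=1$.
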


\bigskip

Note that in a similar fashion as in Section \ref{sid1}, for each $m\leq N$ one has
\[
\text{$\chi_p\big( \mathcal{B}_{N}^{= m} \big) = \chi_p\big( \mathcal{B}_{N}^{\leq m} \big) =1$}, \quad\, p\in [2, \infty]\,,
\]
and, combining the Bohnenblust--Hille inequality for functions on the Boolen cube from \eqref{BHBoole} with H\"older's inequality,
there exists a~ constant $\gamma \geq 1$ such that
\begin{align*}
\chi_p\big( \mathcal{B}_{N}^{= m} \big)\leq \chi_p\big( \mathcal{B}_{N}^{\leq m} \big) \leq \gamma^m, \quad\,
m\leq N, \,\, \frac{2m}{m+1} \leq p < 2\,.
\end{align*}
We prepare the proof of the preceding theorem with a~lemma similar to Lemma~\ref{unc}.

\medskip

\begin{Lemm} \label{unc1}
Let $1 \leq p \leq \frac{2m}{m+1}$ and $z \in  \mathbb{C}^N$, and
denote by $z^\ast$ the decreasing rearrangement of $z$. Then, for each $m \leq N$ the following estimate holds\,{\rm:}
\begin{align*} \label{crucial2}
(z^\ast_N)^m \leq 4\sqrt{2}e^2 2^{m-1}  \sqrt[r]{m!}\, \sqrt{\log (1+20m)} \, \frac{1}{N^{\frac{m}{r}-\frac{1}{2}}}\,
\|M_z\|\,,
\end{align*}
where  $\frac{1}{r}=\frac{1}{p} - \frac{1}{2}$ and $M_z \colon \mathcal{B}_{N}^{= m} \to \ell_p(\{S\colon |S|= m\})$
is given by
\[
M_z f = \big(\widehat{f}(S)(z^\ast)^S\big)_{|S|=m}, \quad\, f\in \mathcal{B}_{N}^{= m}\,.
\]
\end{Lemm}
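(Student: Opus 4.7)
The plan is to mimic the proof of Lemma~\ref{unc}, substituting the torus-based Corollary~\ref{Thm:SKS} by its Boolean counterpart, namely the homogeneous case of Theorem~\ref{MietekM}(ii).

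First I would apply Theorem~\ref{MietekM}(ii) (homogeneous case, with $d=m$) to the multiplier sequence $\xi_S := (z^\ast)^S$ for $S \subset [N]$ with $|S|=m$; since this $\xi$ is exactly the coefficient sequence defining $M_z$, the theorem yields
\[
\Big(\sum_{|S|=m} ((z^\ast)^S)^r\Big)^{1/r} \leq 2\sqrt{2}\,e^2\,2^{m-1}\,\sqrt{1+N\log(1+20m)}\;\|M_z\|.
\]

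Next I would lower-bound the left-hand side using the monotonicity of $z^\ast$: every $S \subset [N]$ with $|S|=m$ has all its indices in $\{1,\ldots,N\}$, so $(z^\ast)^S = \prod_{i \in S} z^\ast_i \geq (z^\ast_N)^m$, and summing over all $\binom{N}{m}$ such sets gives
\[
\sum_{|S|=m}\big((z^\ast)^S\big)^r \;\geq\; \binom{N}{m}\,(z^\ast_N)^{rm}.
\]
Substituting the elementary lower bound $\binom{N}{m} \geq N^m/(e^m\,m!)$ (which follows from $m^m \leq e^m\,m!$, a consequence of Stirling's formula), taking $r$-th roots, and bounding $\sqrt{1+N\log(1+20m)} \leq \sqrt{2N\log(1+20m)}$ to pull out a factor of $\sqrt{N}$, one reaches an estimate of the shape
\[
(z^\ast_N)^m \leq c\,2^{m-1}\,e^{m/r}\,(m!)^{1/r}\,\sqrt{\log(1+20m)}\,\,N^{1/2-m/r}\,\|M_z\|,
\]
for some universal $c>0$. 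Since $p \leq \tfrac{2m}{m+1} \leq 2$ forces $r \geq 2$, the residual factor $e^{m/r} \leq e^{m/2}$ is absorbed into the stated geometric factor $2^{m-1}$ at the cost of slightly larger numerical constants, yielding the claim.

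The main obstacle is simply the careful bookkeeping of constants through the Stirling estimate and the various absorptions; the structural idea --- transferring the multiplier estimate on $\mathcal{B}_N^{=m}$ to a pointwise estimate on the smallest entry of $z^\ast$ by summing $\binom{N}{m}$ trivial lower bounds --- is identical to that of Lemma~\ref{unc}.
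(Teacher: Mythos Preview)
Your proposal is correct and follows exactly the paper's route: apply the homogeneous case of Theorem~\ref{MietekM}(ii) to the multiplier $\xi_S=(z^\ast)^S$, then lower-bound $\sum_{|S|=m}((z^\ast)^S)^r$ by $\binom{N}{m}(z^\ast_N)^{rm}$ and estimate the binomial coefficient. In fact your bookkeeping is more careful than the paper's: the paper writes $\tfrac{N^m}{m!}\le \binom{N}{m}$, which is false (the inequality goes the other way), whereas your use of $\binom{N}{m}\ge N^m/(e^m m!)$ is the correct substitute and produces the extra factor $e^{m/r}$ you flagged; since the lemma is only ever applied to obtain a bound of the form $\gamma^{-m}(N/m)^{m/r-1/2}$ in Theorem~\ref{appA}, this discrepancy in the exact constant is immaterial.
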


\begin{proof}
From Theorem~\ref{MietekM}, we get
\begin{align*}
(z^\ast_N)^{rm}&\sum_{ |S |=m} 1  = \sum_{ |\alpha |=m} \big( (z_N^\ast)\ldots (z_N^\ast) \big)^r \\
& \leq \sum_{ |S |=m} (z^\ast)^{r S} \leq (4e^2 2^{m-1})^r\, \big(1+N\log(1+20 m)\big)^{\frac{r}{2}}\,\|M_z\|^r\,.
\end{align*}
This yields
\begin{align*}
(z^\ast_N)^{rm} \frac{N^m}{m!} \leq (z^\ast_n)^{rm}  \binom{n}{m}  \leq
(4\sqrt{2}e^2 2^{m-1})^r\, \big(N\log(1+20 m)\big)^{\frac{r}{2}}\,\|M_z\|^r\,,
\end{align*}
as required.
\end{proof}

\begin{proof}[Proof of Theorem~\ref{appA}]
\noindent Lower bound: For $\pmb{1} = (1, \ldots ,1) \in \mathbb{C}^{|\{S\colon |S|=m\}|}$ we by definition have
\[
\|M_{\pmb{1}} \colon \mathcal{B}^{=m}_N \to \ell_{p}(\{S; |S|=m\})\| =\chi_p\big(\mathcal{B}^{=m}_N\big)\,.
\]
Then the conclusion follows if we, exactly as in Lemma~\ref{unc}, apply Lemma~\ref{unc1} to $z =\pmb{1}$.
Upper bound: Take $f \in \mathcal{B}^{\leq m}_N$, and interpret it as a~polynomial $F \in \mathcal{P}_{\leq m}(\mathbb{T}^N)$.
Then we know from Theorem~\ref{app1} that
\begin{align*}
\sum_{|S|\leq m}  |\widehat{f}(S)| \leq \gamma^m \Big( \frac{N}{m}\Big)^{\frac{m}{r}-\frac{1}{2}}
\|F\|_{\mathcal{P}_{\leq m}(\mathbb{T}^N)}\leq
\gamma^m (1 + \sqrt{2})^m \Big( \frac{N}{m}\Big)^{\frac{m}{r}-\frac{1}{2}}
\|f\|_{\mathcal{B}^{\leq m}_N}\,,
\end{align*}
where the very last estimate again follows from a result of Klimek in~\cite{K}.
\end{proof}

\medskip

\subsubsection{Monomial convergence II}
Given $V \subset \mathcal{B}$, we define the set of monomial convergence of $V$ by
\[
\mon(V) := \Big\{x \in \R^{\N};\,\ \exists C> 0 \,\,\forall f \in V: \hspace{3mm}
\mbox{$\sum_{S\subset [N]}{|\widehat{f}(S) x^{S}|} \leq C \, \| f\|_{\infty}$} \Big\}\,,
\]
where recall that $x^S := \prod_{n \in S}{x_{n}}$ for each $S \subset [N]$ and for all $x \in \R^{\N}$.
Let us denote by $S \subset_{fin} \mathbb{N}$ the fact that $S$ is a finite subset of $\mathbb{N}$.
We say that a real sequence $(\xi_S)_{S\subset_{fin} \mathbb{N}}$ is multiplicative if
\[
\text{$\xi_R \,\xi_S = \xi_{R \cup S}$ \,for all pairwise disjoint subsets \, $R, S \subset_{fin} \mathbb{N}$}\,.
\]
Note that $(\xi_S)_{S\subset_{fin} \mathbb{N}}$ is multiplicative if and only if there exists $x \in \R^{\N}$,
for all $S \subset_{fin} \mathbb{N}$, we have $x^{S} = \xi_S$. Hence $\mon(V) $  consists exactly of all
multiplicative $\ell_1(\{S\colon S \subset_{fin} \mathbb{N}\})$-multipliers of $V$.

\medskip
Here are some basic properties of $\mon (\mathcal{B})$.
\medskip
\begin{Prop}
\label{Prop:monSetProperties}
Let $x \in \mon (\mathcal{B})$ and $y \in \R^\N$. Then, each of the following conditions yields that
$y\in \mon (\mathcal{B})${\rm:}
\begin{enumerate}
\item[{\rm(i)}] $y$ differs from $x$ in a finite number of entries\,{\rm;}
\item[{\rm(ii)}] $y$ is a~ permutation of $x$\,{\rm;}
\item[{\rm(iii)}] $|y_{n}| \leq |x_{n}|$ for each $n \in \N$.
\end{enumerate}
\end{Prop}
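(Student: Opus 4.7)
The plan is to handle the three items in the order (iii), (ii), (i), since only the last is nontrivial. For (iii) I would simply observe that $|y_n|\le |x_n|$ for every $n$ forces $|y^{S}|\le |x^{S}|$ for every finite $S\subset \mathbb{N}$, so the defining estimate for $\mon(\mathcal{B})$ transfers from $x$ to $y$ with the same constant. For (ii), given a bijection $\sigma\colon\mathbb{N}\to\mathbb{N}$ with $y_n=x_{\sigma(n)}$, the substitution $t_n\mapsto s_{\sigma(n)}$ converts any $f\in\mathcal{B}_N$ into a Boolean function $g\in\mathcal{B}$ (depending only on the variables indexed by $\sigma([N])$) with $\|g\|_\infty=\|f\|_\infty$ and $\widehat{g}(\sigma(S))=\widehat{f}(S)$; since $y^{S}=x^{\sigma(S)}$, the hypothesis $x\in\mon(\mathcal{B})$ applied to $g$ yields the required bound for $f$ against $y$ after reindexing.

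The interesting part is (i). Let $F\subset \mathbb{N}$ denote the finite set where $x$ and $y$ differ, fix $f\in\mathcal{B}_N$ with $F\subset [N]$, and decompose every $S\subset [N]$ uniquely as $S=S'\cup T$ with $S'=S\cap F$ and $T=S\setminus F$, so that $y^{S}=y^{S'}\,x^{T}$. The key construction is, for each $S'\subset F$, the partial Fourier section
\[
g_{S'}(t) := \mathbb{E}_{\epsilon\in\{-1,1\}^{F}}\big[\,\epsilon^{S'}\, f(\epsilon,t)\,\big], \qquad t\in\{-1,1\}^{[N]\setminus F}.
\]
A direct computation using the Fourier--Walsh expansion of $f$ and orthogonality of Walsh characters gives $\widehat{g_{S'}}(T)=\widehat{f}(S'\cup T)$ for every $T\subset [N]\setminus F$, while $|\epsilon^{S'}|=1$ yields $\|g_{S'}\|_\infty\le\|f\|_\infty$. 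Hence each $g_{S'}$ belongs to $\mathcal{B}$, and the hypothesis $x\in\mon(\mathcal{B})$ applies to it.

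Putting these pieces together I would then conclude
\[
\sum_{S\subset [N]} |\widehat{f}(S)\,y^{S}|
= \sum_{S'\subset F} |y^{S'}| \sum_{T\subset [N]\setminus F} |\widehat{g_{S'}}(T)\,x^{T}|
\le C(x)\,\|f\|_\infty \sum_{S'\subset F} |y^{S'}|,
\]
with the last sum being a finite constant depending only on $y$ and $F$. The only genuine obstacle is precisely (i): one cannot dominate $y$ pointwise by $x$, since the modified entries may be larger in absolute value, or even nonzero where $x$ vanishes. The averaging trick above circumvents this by projecting $f$ onto its $\epsilon^{S'}$-component in the finite block of $F$-variables, producing Boolean functions with controlled sup norm to which the hypothesis on $x$ is directly applicable.
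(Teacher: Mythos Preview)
Your proof is correct. Parts (ii) and (iii) coincide with the paper's argument. For (i) the paper proceeds differently: it reduces to the case where $x$ and $y$ differ at a single index $n_{0}$, and then observes that the function $g(t)=t_{n_{0}}f(t)$ lies in $\mathcal{B}$ with $\|g\|_{\infty}=\|f\|_{\infty}$ and $\widehat{g}(S\setminus\{n_{0}\})=\widehat{f}(S)$ whenever $n_{0}\in S$; splitting $\sum_{S}|\widehat{f}(S)\,y^{S}|$ according to whether $n_{0}\in S$ or not, the two pieces are bounded via $x\in\mon(\mathcal{B})$ applied to $f$ and to $g$, respectively. Your averaging construction $g_{S'}=\mathbb{E}_{\epsilon}[\epsilon^{S'}f(\epsilon,\cdot)]$ is the standard Fourier projection onto the $S'$-component in the $F$-variables and handles an arbitrary finite $F$ in one stroke, at the cost of introducing $2^{|F|}$ auxiliary functions rather than two; the paper's multiplication trick is slightly more ad hoc but needs an implicit induction on $|F|$. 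Both routes are equally elementary and yield the same conclusion.
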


\begin{proof}
To prove the sufficiency of (i), it is enough to assume that $y$ differs from $x$ in one entry, $x_{n} = y_{n}$
for each $n \neq n_{0} \in \N$. Using that $g(x) = x_{n_{0}} f(x)$ also belongs to $\mathcal{B}$ with
$\widehat{g}(S\setminus \{n_{0}\}) = \widehat{f}(S)$ if $n_{0} \in S \subset_{fin} \N$, we get
\[
\sum_{S}{|\widehat{f}(S) y_{S}|} = \sum_{n_{0} \notin S}{|\widehat{f}(S) x^{S}|} + |y_{n_{0}}|\,
\sum_{n_{0} \in S}{|\widehat{f}(S) x_{S\setminus\{ n_{0}\}}|} < \infty\,.
\]
It is a simple observation that for every $f \in \mathcal{B}$ and for each permutation $\sigma$ of the natural
numbers, the function $f_{\sigma}$ defined by
\[
f_{\sigma}((x_n)_{n \in \N}) = f((x_{\sigma(n)})_{n \in \N})
\]
also belongs to $\mathcal{B}$, which proves that condition ${\rm(ii)}$ is sufficient. Finally, if
${\rm(iii)}$ is satisfied, then
\[
\sum_{S}{|\widehat{f}(S) y_{S}|} \leq \sum_{S}{|\widehat{f}(S) x^{S}|} < \infty. \qedhere
\]
\end{proof}

\medskip

Our aim is to find nice descriptions of $\mon(V)$ for $V = \mathcal{B}$, $V = \mathcal{B}^{=m}$, and
$V = \mathcal{B}^{\leq d}$. It is clear that
\[
\mon(\mathcal{B}) \subset \mon(\mathcal{B}^{\leq d}) \subset \mon(\mathcal{B}^{=m}).
\]
Using the Bohnenblust--Hille inequality from \eqref{BHBoole} (together with H\"older's inequality and the
multi-monomial  theorem) gives that, for each $m\in \mathbb{N}$, we have
\[
\ell_{\frac{2m}{m-1}} \subset \mon (\mathcal{B}^{\leq m}) \subset \mon (\mathcal{B}^{= m})\,.
\]
Similar to Theorem~\ref{MON}.${\rm(i)}$ we even have the following full description.

\begin{Theo} For each positive integer $m$ one has
\[
\mon (\mathcal{B}^{=m}) = \mon (\mathcal{B}^{\leq m}) =\ell_{\frac{2m}{m-1}, \infty}\,.
\]
\end{Theo}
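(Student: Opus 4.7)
The plan is to establish the theorem through the chain
\[
\ell_{\tfrac{2m}{m-1},\infty} \subset \mon(\mathcal{B}^{\leq m}) \subset \mon(\mathcal{B}^{=m}) \subset \ell_{\tfrac{2m}{m-1},\infty},
\]
where the middle inclusion is trivial by restriction. Since the inclusion $\ell_{\tfrac{2m}{m-1}} \subset \mon(\mathcal{B}^{\leq m})$ (strong $\ell_p$) was already recalled right before the theorem from the Bohnenblust--Hille inequality plus H\"older, the only nontrivial tasks are the necessary direction $\mon(\mathcal{B}^{=m}) \subset \ell_{\tfrac{2m}{m-1},\infty}$ and the strict upgrade of the sufficient direction to the weak-type space $\ell_{\tfrac{2m}{m-1},\infty} \subset \mon(\mathcal{B}^{\leq m})$.

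For the necessary direction I would mimic the alternative proof of Theorem~\ref{MON}.(ii.b) presented earlier. Given $x \in \mon(\mathcal{B}^{=m})$, Proposition~\ref{Prop:monSetProperties}.(ii)--(iii) allows one to replace $x$ by the decreasing rearrangement $x^\ast$ of $(|x_n|)_n$ while staying in $\mon(\mathcal{B}^{=m})$, and a standard closed graph argument (as sketched in Section~5.1.3 for the torus) shows that the multiplier $M_{x^\ast}\colon \mathcal{B}^{=m}_N \to \ell_1(\{S\subset[N]\colon |S|=m\})$ is bounded uniformly in $N$. Applying Lemma~\ref{unc1} with $p=1$ (so that $r=2$ and $\tfrac{m}{r}-\tfrac12 = \tfrac{m-1}{2}$) yields $(x^\ast_N)^m \leq C(m)\, N^{-(m-1)/2}\,\|M_{x^\ast}\|$ for each $N \geq m$; taking $m$-th roots gives $x^\ast_N \leq C(m)^{1/m} N^{-(m-1)/(2m)} \|M_{x^\ast}\|^{1/m}$, which is exactly $x^\ast \in \ell_{\tfrac{2m}{m-1},\infty}$.

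For the sufficient direction I would transfer the problem to the torus. Every $f \in \mathcal{B}^{\leq m}_N$ extends to an analytic polynomial $F = F_f \in \mathcal{P}_{\leq m}(\mathbb{T}^N)$ with $\widehat{F}(\alpha) = \widehat{f}(S)$ when $\alpha = \mathbf{1}_S$ is a $0$--$1$ multi-index and $\widehat{F}(\alpha) = 0$ otherwise; Klimek's inequality (invoked already in the proof of Theorem~\ref{MietekM}) gives $\|F\|_{\mathbb{T}^N} \leq (1+\sqrt{2})^m \|f\|_\infty$. Decomposing $F = \sum_{k=0}^m F_k$ into homogeneous parts (each bounded in sup norm by $\|F\|_{\mathbb{T}^N}$ via Cauchy integration in a single circle variable) and using that $\ell_{\tfrac{2m}{m-1},\infty} \subset \ell_{\tfrac{2k}{k-1},\infty} = \mon(H^k_\infty(\mathbb{T}^\infty))$ for each $1 \leq k \leq m$ by Theorem~\ref{MON}.(i), one obtains a constant $C(x,m)$ such that
\[
\sum_{|S|\leq m}|\widehat{f}(S)\, x^S| \,=\, \sum_{\alpha} |\widehat{F}(\alpha)\, x^\alpha| \,\leq\, C(x,m)\,(1+\sqrt{2})^m \|f\|_\infty
\]
for every $x \in \ell_{\tfrac{2m}{m-1},\infty}$ and every such $f$, thus placing $x$ in $\mon(\mathcal{B}^{\leq m})$.

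The main obstacle is the sufficient direction: the naive Bohnenblust--Hille-plus-H\"older argument only produces the strong-norm inclusion $\ell_{\tfrac{2m}{m-1}} \subset \mon(\mathcal{B}^{\leq m})$, and refining it to the weak space $\ell_{\tfrac{2m}{m-1},\infty}$ is blocked at the critical top degree $k=m$, where the H\"older-dual sum $\sum_{|S|=m}\prod_{j\in S}(x^\ast_j)^{2m/(m-1)}$ diverges for a typical $x \in \ell_{\tfrac{2m}{m-1},\infty}\setminus\ell_{\tfrac{2m}{m-1}}$ (one only has $(x^\ast_j)^{2m/(m-1)}\lesssim 1/j$). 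My plan circumvents this endpoint obstruction by reducing to the torus and invoking the deep result of Bayart--Defant--Frerick--Maestre--Sevilla encoded in Theorem~\ref{MON}.(i); a direct self-contained Boolean proof would instead need to adapt the multi-linear polarization and mixed-norm Bohnenblust--Hille inequality lying at the heart of their argument.
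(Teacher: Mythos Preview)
Your proposal is correct and follows essentially the same route as the paper: the necessary direction via the Kislyakov-type estimate (you invoke Lemma~\ref{unc1}, while the paper re-derives the same bound inline from Theorem~\ref{Thm:SK} through the chain $\mathcal{B}^{=m}_N \hookrightarrow \mathcal{P}_{=m}([-1,1]^N) \hookrightarrow \mathcal{P}_{=m}(\mathbb{T}^N) \hookrightarrow \ell_\infty^{(1+20m)^N}$), and the sufficient direction via transfer to the torus through Visser/Klimek followed by Theorem~\ref{MON}.(i). The only cosmetic difference is organizational: the paper first settles $\mon(\mathcal{B}^{=m})=\ell_{\frac{2m}{m-1},\infty}$ (using Visser's $2^{m-1}$ constant for the homogeneous extension) and then deduces the $\leq m$ case by bounding each homogeneous part $f_k$ via Klimek, whereas you go directly for $\ell_{\frac{2m}{m-1},\infty}\subset\mon(\mathcal{B}^{\leq m})$ by decomposing on the torus side.
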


\begin{proof}
We first prove that $\text{mon}(\mathcal{B}^{=m}) =\ell_{\frac{2m}{m-1}, \infty}$. Given an $m$-homogeneous function
$f\colon \{-1,1\}^{N} \rightarrow \R$, we can find $F \in H^{m}_{\infty}(\mathbb{T}^\infty)$ with $\widehat{F}(\alpha)
= \widehat{f}(\alpha)$ for all $\alpha \in \{ 0,1\}^{(\N)}$ and $\widehat{F}(\alpha) = 0$ otherwise, and such that
\[
\|F\|_{H_{\infty}(\mathbb{T}^\infty)} \leq 2^{m-1} \, \| f\|_{\mathcal{B}^{=m}}\,,
\]
where for this estimate we  use a result from \cite{V}. This implies that $\mon{H^{m}_{\infty}(\mathbb{T}^\infty)}
\subset \mon (\mathcal{B}^{=m})$, which by Theorem~\ref{MON}.(i) gives the lower inclusion
\[
\ell_{\frac{2m}{m-1}, \infty} \subset \mon (\mathcal{B}^{=m})\,.
\]
Conversely, if $x \in \mon (\mathcal{B}^{=m})$, then by Proposition~\ref{Prop:monSetProperties} also its decreasing
rearrangement  $r = (x_n^\ast) \in \mon (\mathcal{B}^{=m})$.

Thus there is a constant $C > 0$ such that, for each $f \in \mathcal{B}^{=m}$, we get
\[
\sum_{|S| = m}{|\widehat{f}(S)|\,|r_{S}|} \leq C \, \| f\|_{\infty}\,.
\]
We give a probabilistic argument and a Kislyakov-type argument. The probabilistic argument:
Let $A =\{S \subset [N], |S| = m\}$.
By the Kahane-Salem-Zygmund theorem (see \eqref{KSZ} below) there is a choice of signs $(\xi_{S})_{S \in A}$  such that
\[
\sum_{S \in A}{|\xi_{S}|r_{S}} \leq C \, \Big\| \sum_{S \in A}{\xi_{S} x^{S}} \Big\|_{\infty} \ll \, \sqrt{N} \sqrt{\binom{N}{m}}.
\]
Since $(r_{n})_{n \in \N}$ is decreasing, we get
\[
\binom{N}{m} \, r_{N}^{m} \ll \, \sqrt{N} \, \sqrt{\binom{N}{m}}\,,
\]
and so for some constant $K_{m}$ independent of $N$, we have
\[
r_{N}^m \leq C \sqrt{N} \binom{N}{m}^{-\frac{1}{2}} \ll_m \frac{\sqrt{N}}{N^{m/2}} \,.
\]

\bigskip
\noindent {The Kislyakov argument:}
Consider the canonical embeddings
\[
\mathcal{B}^{=m} \longrightarrow \mathcal{P}_{m}([-1,1])^N) \longrightarrow \mathcal{P}_{m}(\mathbb{T}^N)
\longrightarrow \ell^{(1+20m)^N}_\infty\,,
\]
where the first two embedding  are the canonical ones and the last comes from \eqref{bernd}. The first one is isometric,
the second one  $2^{m-1}$-isomorphic, and the third $2$-isomorphic. Then, we deduce from Theorem~\ref{Thm:SK} that
\[
r_N^{2m}\binom{N}{m} \leq \sum_{S \in A} |r^S|^2 \ll_m  N\,.
\]
Since $\Big( \frac{N}{m}\Big)^m \leq \binom{N}{m}$, the argument completes. Finally, we prove that for each
$d \in \N$, we have $\mon (\mathcal{B}^{\leq d}) = \ell_{\frac{2d}{d-1}}$. Using the fact that, for every
$f \in \mathcal{B}^{\leq d}_N$, we get by \cite{K},
\[
\|f\|_\infty \leq (1 +\sqrt{2})^m \|f_m\|_\infty\,,
\]
where $f_m = \sum_{|S|=m} \widehat{f}(S) \chi_S$ denotes the $m$-homogeneous part of $f$, it easily follows that
\[
\ell_{\frac{2d}{d-1}, \infty} \subset \bigcap_{m=1}^{d}{\mon (\mathcal{B}^{=m})} \subset \mon (\mathcal{B}^{\leq d})
\subset \mon (\mathcal{B}^{=d}) \subset \ell_{\frac{2d}{d-1}, \infty}\,.\qedhere
\]
\end{proof}

Let us turn to the description of $\mon (\mathcal{B})$. We easily obtain
\begin{equation} \label{l2}
\ell_{2} \subset \mon (\mathcal{B})\,,
\end{equation}
using the Cauchy--Schwarz inequality
\[
\sum_{S}{|\widehat{f}(S) x^{S}|} \leq \Big( \sum_{S}{|\widehat{f}(S)|^{2}} \Big)^{\frac{1}{2}}
\Big( \sum_{S}{|x^{S}|^2} \Big)^{\frac{1}{2}} \leq \|f\|_{\infty}  \Big(\prod_{n=1}^{\infty}{(1 + |x_{n}|^2)\Big)^{\frac{1}{2}}}\,.
\]
Regarding the estimations from above, using the results for the
$m$-homogeneous functions, we have
\begin{equation}\label{equa:monomialAbove1}
\mon (\mathcal{B}) \subset \bigcap_{m\in \N}{\ell_{\frac{2m}{m-1}, \infty}}.
\end{equation}
This result can be improved.

\begin{Prop} \label{Prop:monomialAbove1}
For every $ x \in \mon (\mathcal{B})$ one has
\begin{equation*}\label{equa:monomialAbove3}
\sup_{N \in \N}{\, \frac{1}{\sqrt{N}}  \sum_{n=1}^{N}{|x_{n}|}} < +\infty\,,
\end{equation*}
and in particular
\begin{equation*}
\mon (\mathcal{B}) \subset \ell_{2, \infty}.
\end{equation*}
\end{Prop}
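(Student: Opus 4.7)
The plan is to test the multiplier inequality defining $\mon(\mathcal{B})$ against a single well-chosen function, namely the majority function on the Boolean cube, whose sup-norm is $1$ but whose degree-one Fourier coefficients are each of order $1/\sqrt{N}$. Reading off the singleton contributions on the left-hand side directly produces the desired $O(\sqrt{N})$ bound on $\sum_{n=1}^{N} |x_n|$. To begin, I would use Proposition~\ref{Prop:monSetProperties}(ii)--(iii) to replace $x$ by its decreasing rearrangement of absolute values, so that one may assume $x_{1} \geq x_{2} \geq \cdots \geq 0$. By the definition of $\mon(\mathcal{B})$ (and a standard closed graph argument) there is then a constant $C = C(x) > 0$ with
\[
\sum_{S \subset [N]} |\widehat{f}(S)|\, x^{S} \,\leq\, C\,\|f\|_{\infty}\,, \quad\, f \in \mathcal{B}_{N}\,,\,\, N \in \mathbb{N}.
\]

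For $N$ odd, I would then apply this to the majority function $\Maj_{N}(y) := \sign(y_{1} + \cdots + y_{N}) \in \mathcal{B}_{N}$, which satisfies $\|\Maj_{N}\|_{\infty} = 1$. A short symmetry calculation (using that $\sum_{k \neq n} y_{k}$ has even parity when $N$ is odd) yields
\[
\widehat{\Maj_{N}}(\{n\}) \,=\, \mathbb{P}\Big(\sum_{k \neq n} y_{k} = 0\Big) \,=\, \frac{1}{2^{N-1}}\binom{N-1}{(N-1)/2}\,,
\]
independently of $n$, and Stirling's formula shows that this is at least $\gamma/\sqrt{N}$ for some absolute constant $\gamma > 0$ and all odd $N$. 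Keeping only the singleton terms in the multiplier inequality then gives
\[
\frac{\gamma}{\sqrt{N}} \sum_{n=1}^{N} x_{n} \,\leq\, \sum_{n=1}^{N} |\widehat{\Maj_{N}}(\{n\})|\, x_{n} \,\leq\, \sum_{S \subset [N]} |\widehat{\Maj_{N}}(S)|\, x^{S} \,\leq\, C\,,
\]
and hence $\sum_{n=1}^{N} x_{n} \leq (C/\gamma) \sqrt{N}$ for every odd $N$; the even case follows from the bound for $N+1$. The inclusion $\mon(\mathcal{B}) \subset \ell_{2, \infty}$ is then immediate, since $x$ being non-negative and decreasing together with $N x_{N} \leq \sum_{n=1}^{N} x_{n} = O(\sqrt{N})$ forces $x_{N} = O(1/\sqrt{N})$.

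I do not foresee a serious obstacle. The only computation of any substance is the Stirling asymptotic for the central binomial coefficient $\binom{N-1}{(N-1)/2}/2^{N-1}$, which is classical and could be replaced by any sufficiently sharp elementary lower bound. What is pleasant about the argument is that it entirely bypasses the local Banach space machinery developed in the earlier sections and exploits instead only the very definition of $\mon(\mathcal{B})$ plus one concrete low-complexity test function. Moreover, since Parseval forces $\min_{n} |\widehat{f}(\{n\})| \leq 1/\sqrt{N}$ whenever $\|f\|_{\infty} \leq 1$, the rate $\sqrt{N}$ coming from this route is essentially optimal, which is consistent with the proposition's claim.
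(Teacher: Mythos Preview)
Your proposal is correct and follows essentially the same approach as the paper: both proofs test the defining inequality of $\mon(\mathcal{B})$ against the majority function $\Maj_N$, use that its degree-one Fourier coefficients are of order $1/\sqrt{N}$, and keep only the singleton terms. The only cosmetic differences are that the paper quotes the value $\widehat{\Maj_N}(\{n\}) \sim \sqrt{2/\pi N}$ from a reference rather than deriving it via Stirling, and that it passes to the decreasing rearrangement only for the $\ell_{2,\infty}$ conclusion rather than at the outset.
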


\begin{proof}
Using the majority function $\Maj_{N}(x)$ (in fact, only its $1$-homogeneous part), we know \cite{BooleanRyan} that for all $S$ with $|S|=1$
\[
\widehat{\Maj}_{N}(S) = \sqrt{\frac{2}{\pi}} \frac{1}{\sqrt{N}}\,.
\]
Hence, for all $x \in \mon (\mathcal{B})$ and every $N \in \mathbb{N}$, we have
\[
\sum_{n=1}^{N}{\sqrt{\frac{2}{\pi}} \frac{1}{\sqrt{N}} \, |x_{n}|}
= \sum_{|S| = 1}{|\widehat{\Maj}_{N}{(S)}| \, |x^{S}|} \leq \sum_{S \subset [N]}{|\widehat{\Maj}_{N}{(S)}| \, |x^{S}|} \leq C_{x}\,,
\]
the first estimate. Since the decreasing rearrangement $x^\ast$ by Proposition \ref{Prop:monSetProperties} also belongs to
$\mon (\mathcal{B})$, the 'in particular' follows.
\end{proof}

\medskip

On the other hand, the use of Kahane--Salem--Zygmund inequality (for Boolean functions, see e.g. \cite[Lemma 3.1]{DefantMastyloPerezA})
does not improve the condition from Proposition~\ref{Prop:monomialAbove1}. Indeed, this inequality   yields that for every $N \in \N$
and each family $(c_{S})_{S \subset [N]}$ in $\mathbb{R}$ there is a~choice of signs $(\xi_{S})_{S \subset [N]}$ such that
\begin{align}  \label{KSZ}
\sum_{S \subset [N]}{|c_{S}| |x^{S}|} \leq C_{x} \, \Big\| \sum_{S \subset [N]}{\xi_{S} c_{S} x^{S}} \Big\|_{\infty}
\leq C_{x} \, 6 \sqrt{\log{2}} \, \sqrt{N}\, \Big( \sum_{S \subset [N]}{|c_{S}|^{2}} \Big)^{1/2}\,.
\end{align}
Taking the supremum over all $(c_{S})_{S \subset [N]}$ with $\ell_{2}$-norm equal to one, we deduce that
\begin{align}  \label{KSZKSZ}
\prod_{n=1}^{N}{(1 + x_{n}^{2})} = \Big(\sum_{S \subset [N]}{|x^{S}|^{2}}\Big)^{1/2} \leq C_{x} \, 6 \sqrt{\log{2}} \, \sqrt{N}.
\end{align}
Since $(x_{n})_{n \in \N}$ converges to zero, we can find a positive constant $\alpha > 0$ such that $\exp{(\alpha |x_{n}|^{2})}
\leq 1 + |x_{n}|^{2}$ for every $n \in \N$, so that
\[
\exp{( \alpha \sum_{n=1}^{N}{|x_{n}|^{2}} )} \leq \prod_{n=1}^{N}{(1 + x_{n}^{2})} \leq C_{x} \, 6 \sqrt{\log{2}} \, \sqrt{N}.
\]
It follows that
\begin{equation}
\sup_{N \in \N}{ \, \frac{1}{\log{N}}\sum_{n=1}^{N}{|x_{n}|^{2}}} < +\infty.
\end{equation}
But this condition is weaker than what we got in Proposition \ref{Prop:monomialAbove1}, since
\[
\frac{1}{\log{N}} \sum_{n=1}^{N}{|x_{n}|^{2}} \leq \frac{1}{\log{N}} \sum_{n=1}^{N}{|x_{n}^\ast|^{2}}
\leq \frac{\| x\|_{\ell_{2, \infty}}^{2}}{\log{N}} \sum_{n=1}^{N}{\frac{1}{n}} \leq \| x\|_{\ell_{2, \infty}}^{2}.
\]

\bigskip

Finally, we establish the following analog of statement $(2)$ from Theorem~\ref{MON}.

\begin{Prop}
For each $x \in \mon (\mathcal{B})$ one has
\[
\limsup_{N \to \infty}  \frac{1}{\log N} \sum_{n=1}^N (x^\ast_n)^2 \leq 1\,.
\]
\end{Prop}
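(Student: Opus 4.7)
The plan is to mimic the Kislyakov-style argument already used just above for Theorem~\ref{MON}(2b), replacing Theorem~\ref{Thm:SK} by its Boolean-cube analogue Theorem~\ref{MietekM}(i), and then to extract the sharp constant~$1$ via a short Taylor expansion. Fix $x \in \mon(\mathcal{B})$; by Proposition~\ref{Prop:monSetProperties}(ii)--(iii) its non-negative decreasing rearrangement $x^*$ also lies in $\mon(\mathcal{B})$, so the closed graph theorem (applied uniformly through the single-constant formulation of membership in $\mon(\mathcal{B})$) furnishes $C_x > 0$ with $\|M_\xi \colon \mathcal{B}_N \to \ell_1\| \le C_x$ for $\xi = ((x^*)^S)_{S \subset [N]}$, uniformly in $N$.

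Next I would apply Theorem~\ref{MietekM}(i) with $p=1$ (so $r=2$) to this $\xi$ and invoke the telescoping identity
\[
\sum_{S \subset [N]} (x^*)^{2S} \;=\; \prod_{n=1}^{N} \bigl(1 + (x^*_n)^2\bigr)\,,
\]
which together yield $\prod_{n=1}^N (1 + (x^*_n)^2) \le K(1 + N \log 2)$ for a constant $K$ depending only on $C_x$. Taking logarithms then gives $\sum_{n=1}^N \log(1 + (x^*_n)^2) \le \log N + O(1)$.

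Finally, I would invoke the elementary inequality $\log(1+t) \ge t - t^2/2$ valid for all $t \ge 0$ (immediate from $\tfrac{d}{dt}[\log(1+t) - t + t^2/2] = t^2/(1+t)$), combined with the summability $\sum_n (x^*_n)^4 < \infty$, which is a consequence of $x^* \in \ell_{2,\infty}$ established in Proposition~\ref{Prop:monomialAbove1}. This converts the display above into $\sum_{n=1}^N (x^*_n)^2 \le \log N + O(1)$, and dividing by $\log N$ and passing to $\limsup$ delivers the desired bound. The one delicate point (rather than a genuine obstacle) is to verify that the coefficient in front of $\log N$ comes out to be exactly~$1$: this is precisely where the choice $r=2$ in Theorem~\ref{MietekM}(i) is critical, since it yields a right-hand side of order $\sqrt{N}$, which squares to an estimate linear in $N$ and therefore becomes $\log N + O(1)$ once the logarithm of the product formula is taken; replacing $\log(1+t)$ by $t$ only costs the summable remainder $\tfrac{1}{2}\sum (x^*_n)^4$, and so preserves the exact growth rate.
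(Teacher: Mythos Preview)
Your proof is correct, and it takes a genuinely different---and noticeably shorter---route than the paper's own argument.

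Both proofs begin identically, applying Theorem~\ref{MietekM}(i) with $p=1$ to the multiplier $\xi_S=(x^*)^S$ to obtain $\sum_{S\subset[N]}(x^*)^{2S}\le K N$ uniformly in $N$. From here the paths diverge. The paper restricts attention to a single homogeneity level $|S|=m$, proves the combinatorial inequality
\[
\frac{(r_m^2+\cdots+r_N^2)^m}{m!}\;\le\;\sum_{|S|=m} r_S^2
\]
(via an explicit bijective bound exploiting that $r$ is non-increasing), applies Stirling, and then optimises by taking $m=\log N$ to extract the constant~$1$. Your argument instead keeps all levels at once via the product identity $\sum_{S\subset[N]}(x^*)^{2S}=\prod_{n\le N}(1+(x^*_n)^2)$, takes a logarithm, and replaces $\log(1+t)$ by $t$ at the cost of a remainder controlled by $\sum_n(x^*_n)^4<\infty$, which you get for free from $x^*\in\ell_{2,\infty}$ (Proposition~\ref{Prop:monomialAbove1}). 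Your approach is more elementary---it avoids the combinatorial lemma and the optimisation in $m$---and in fact shows that the product-formula route, which the paper had dismissed just before the proposition as yielding only a finite (unspecified) bound, actually delivers the sharp constant~$1$ once one feeds in the $\ell_{2,\infty}$ information already available. The paper's approach, on the other hand, is a direct transcription of the proof used for $H_\infty(\mathbb T^\infty)$ in \cite[Section~10.5.1]{Defant}, so it has the merit of parallelism with the torus case.
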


\begin{proof}
We write $r=(r_{n})_{n \in \N}$ for the decreasing rearrangement of $(|x_{n}|)_{n \in \N}$. Then
\[
\sup_{N} \big\|M_r\colon  \mathcal{B}_N \to \ell_1(\{S \colon S \subset [N] \})\big\| < \infty\,,
\]
and hence by Theorem~\ref{MietekM}.$(i)$ we have that there is a constant $C = C(r)$ such that for all $N$
\[
\sum_{ S \subset [N]}{r_{S}^{2}} \leq C \, N;
\]
again, there is an alternative proof using the  Kahane--Salem--Zygmund inequality (in the form of \eqref{KSZ}
and \eqref{KSZKSZ}). We claim that
\[
\frac{(r_{m}^{2} + \ldots + r_{N}^{2})^{m}}{m!} \leq \sum_{|S| = m}{r_{S}^{2}}.
\]
We postpone the proof of the claim to end. The claim yields by Stirling's inequality that there is a~positive
constant $C'$ depending just on $r$ such that
\[
\frac{r_{m}^{2} + \ldots + r_{N}^{2}}{m} \leq (C')^{\frac{1}{m}} m^{\frac{1}{2m}} \, \frac{N^{\frac{1}{m}}}{e}\,.
\]
Putting $m=\log{N}$, we arrive to the inequality
\[
\limsup_{N}{\frac{r_{\log{N}}^{2} + \ldots + r_{N}^{2}}{\log{N}}} \leq 1\,.
\]
Since $r_{n}^{2}$ converges to zero, we immediately conclude that
\[
\limsup_{N}{\frac{r_{1}^{2} + \ldots + r_{N}^{2}}{\log{N}}} \leq 1\,.
\]
To prove the claim, note that every $S \subset \N$ with $|S| = m$ is determined by the subset $S_{1}$ of elements $n$
with $n < m$ and the subset $S_{2}$ of those with $n \geq m$. Then, we can find unique finite sequences
$m \leq i_{1}, i_{2}, \ldots, i_{k} \leq N$ and $m_{1}, m_{2}, \ldots, m_{k}$ in $\N$ with $m_{1}+ \ldots + m_{k}=m$
and such that
\begin{align*}
S^{c} \cap \{n; \,  n < m \} & = \{m_{1},  m_{1} + m_{2}, \ldots,  m_{1} + \ldots + m_{k-1}\}\\
S \cap \{n\; \, n \geq m \} & = \{i_{1} < i_{2} < \ldots < i_{k}\}\,.
\end{align*}
We can then write $r_{S}$ as
\[
r_{S} = r_{1} \cdot \ldots \cdot r_{m_{1}-1} \cdot r_{i_{1}} \cdot r_{m_{1}+1}
\cdot \ldots \cdot r_{m_{1} + m_{2} - 1}\cdot  r_{i_{2}} \cdot r_{m_{1} + m_{2} + 1} \cdot \ldots \]
and using that $(r_{n})_{n \in \N}$ is non-increasing, we deduce that
\[
r_{S} \geq r_{i_{1}}^{m_{1}} \cdot r_{i_{2}}^{m_{2}} \cdot \ldots \cdot r_{i_{k}}^{m_{k}}\,.
\]
Therefore
\begin{align*} m! \sum_{|S| = m}{r_{S}^{2}} & \geq \sum_{\substack{ (\alpha_{m}, \ldots, \alpha_{N}) \in \mathbb{N}_{0}^{N-m+1}
|\alpha| = m}}{\binom{m}{\alpha} r_{m}^{\alpha_{m}} \cdot r_{m+1}^{\alpha_{m+1}}\cdot \ldots \cdot r_{N}^{\alpha_{N}} } \\
\smallskip
& = (r_{m} + \ldots + r_{N})^{m} \,,
\end{align*}
and the proof completes.
\end{proof}
\medskip

\vspace{2.5 mm}

\noindent
Institut f\"ur Mathematik \\
Carl von Ossietzky Universit\"at \\
Postfach 2503 \\
D-26111 Oldenburg, Germany

\vspace{0.5 mm}

\noindent E-mail: {\tt andreas.defant@uni-oldenburg.de} \\

\vspace{2 mm}

\noindent Faculty of Mathematics and Computer Science \\
Adam Mickiewicz University, Pozna\'n \\
Uniwersytetu Pozna{\'n}skiego 4 \\
61-614 Pozna{\'n}, Poland

\vspace{0.5 mm}

\noindent E-mail: {\tt mastylo@amu.edu.pl} \\

\vspace{2 mm}

\noindent
Departamento de Matem\'{a}tica Aplicada I\\
Escuela T\'{e}cnica Superior de Ingenieros Industriales\\
Universidad Nacional de Educación a Distancia (UNED) \\
28040 Madrid, Spain

\vspace{0.5 mm}

\noindent
E-mail: {\tt antperez@ind.uned.es}
\end{document}